\pgfplotsset{compat=1.18}
\pgfplotsset{compat=1.18}
\numberwithin{equation}{section} 
\newtheorem{theorem}{Theorem}
\newtheorem{example}{Example}
\newtheorem{definition}{Definition}
\newtheorem{lemma}{Lemma}
\newtheorem{corollary}{Corollary}
\newtheorem{remark}{Remark}[section]
\title[ Regularity  for  Hardy-H\'{e}non-type models ruled by $\infty$-Laplacian]{Improved regularity estimates for  Hardy-H\'{e}non-type equations driven by $\infty$-Laplacian}
\author{ Elzon C. Bezerra J\'{u}nior}
\address{Universidade Federal do Cariri - UFCA. Departamento  de Matemática. Rua Oleg\'{a}rio Em\'{i}dio de Ara\'{u}jo, S/N – Centro, Brejo Santo – CE, CEP 63260-000, Brazil}
\email{cezar.bezerra@ufca.edu.br}
\author{Jo\~{a}o Vitor da Silva}
\address{Departamento de Matem\'{a}tica - Instituto de Matem\'{a}tica, Estat\'{i}stica e Computa\c{c}\~{a}o Cient\'{i}fica - Universidade Estadual de Campinas, Rua S\'{e}rgio Buarque de Holanda, 651,  CEP 13083-859, Campinas, SP, Brazil}
\email{jdasilva@unicmp.br}
\author{ Thialita M. Nascimento}
\address{Department of Mathematics,  Iowa State University, 96 Carver Hall, 50011, Ames, IA, USA}
\email{thnasc@iastate.edu}
\author{Ginaldo S. S\'{a}}
\address{Departamento de Matem\'{a}tica - Instituto de Matem\'{a}tica, Estat\'{i}stica e Computa\c{c}\~{a}o Cient\'{i}fica - Universidade Estadual de Campinas, Rua S\'{e}rgio Buarque de Holanda, 651, CEP 13083-859, Campinas, SP, Brazil}
\email{ginaldo@unicamp.br}
\begin{document}
\maketitle

\date{} 

\begin{abstract}  	
{\scriptsize{
In this work, we establish sharp and improved regularity estimates for viscosity solutions of Hardy-H\'{e}non-type equations with possibly singular weights and strong absorption governed by the $\infty$-Laplacian
$$
		\Delta_{\infty} u(x)  = |x|^{\alpha}u_+^m(x) \quad \text{in} \quad B_1,
$$
under suitable assumptions on the data.  In this setting, we derive an explicit regularity exponent that depends only on universal parameters. Additionally, we prove non-degeneracy properties, providing further geometric insights into the nature of these solutions. Our regularity estimates not only improve but also extend, to some extent, the previously obtained results for zero-obstacle and dead-core problems driven by the $\infty$-Laplacian. As an application of our findings, we also address some Liouville-type results for this class of equations.
}}
\noindent \textbf{MSC (2020)}: 35B65; 35J60; 35J94.

\noindent \textbf{Keywords}: Infinity-Laplacian, improved regularity, Hardy-H\'{e}non-type equations.

\tableofcontents

\end{abstract}

\newpage

\section{Introduction}

In this work, we establish sharp and improved regularity estimates for viscosity solutions of Hardy-H\'{e}non-type elliptic equations, governed by the infinity-Laplacian under a strong absorption condition:
\begin{equation}\label{pobst}
	 \Delta_{\infty}  u(x)  =  f(|x|, u(x)) \quad \text{in} \quad B_1,
\end{equation}
where $B_1 \subset \mathbb{R}^n$ denotes the unit $n$-dimensional ball centered at the origin with $n \geq 2$, and
\[
\Delta_{\infty} u(x) := \sum_{i,j = 1}^n \partial_i u(x) \, \partial_{ij} u(x) \, \partial_j u(x) = (Du(x))^T D^2 u(x) \cdot D u(x),
\]
is the infinity-Laplacian operator (see \cite{ACJ04} for an enlightening survey on this topic). Moreover, for all $(x, t) \in B_1 \times \mathfrak{I}$, $r, s \in (0, 1)$ ($\mathfrak{I} \subset \mathbb{R}$ an interval), we assume that there exist a universal constant $\mathrm{c}_n > 0$ and an $f_0 \in L^\infty(B_1)$ such that
{\scriptsize{
\begin{equation}\label{EqHomog-f}
|f(r|x|, s t)| \leq \mathrm{c}_n r^{\alpha} s^m \|f_0\|_{L^\infty(B_1)} \quad \text{for} \quad 0 \leq m < 3 \quad  \text{and} \quad  \alpha \in \left(-\frac{4}{3}m, \infty\right).
\end{equation}}}

Observe that the restriction
$$
f(|x|, 1) := \mathfrak{h}(x)
$$
acts as a type of weight in our diffusion model and satisfies the following growth condition: there exist universal constants $0 < \mathrm{c}_1 \leq \mathrm{c}_2 < \infty$ such that
$$
\mathrm{c}_1|x|^{\alpha} \leq \mathfrak{h}(x) \leq \mathrm{c}_2|x|^{\alpha}.
$$

It should be noted that the exponent of the weight can be negative. This allows us to include a broader class of Hardy-H\'{e}non-type models, provided that the right-hand side remains bounded after an appropriate normalization and scaling process. The following examples will illustrate these statements and properties.

\begin{example}[{\bf A toy model}]
An archetypal model for \eqref{pobst} is the Hardy-H\'{e}non-type problem (with strong absorption) driven by the $\infty$-Laplacian:
$$
\Delta_{\infty} u(x) = \sum_{i=1}^{k_0} c_i |x|^{\alpha_i} u_{+}^{m_i}(x) \quad \text{in} \quad B_1,
$$
where
{\scriptsize{
$$
0 < \alpha_i < \infty, \quad c_i \geq 0, \quad \text{and} \quad 0 \leq m_i < 3 \quad (\text{or} \quad -\frac{4}{3} m < \alpha_i < 0) \quad \text{for} \quad 1 \leq i \leq k_0.
$$}}
In this case, $f(x, u)$ satisfies \eqref{EqHomog-f} as follows:
$$
f(r|x|, s u) \leq r^{\displaystyle \min_{1 \leq i \leq k_0} \{\alpha_i\}} s^{\displaystyle \min_{1 \leq i \leq k_0} \{m_i\}} \|f_0\|_{L^\infty(B_1)} \quad \forall (x, u) \in B_1 \times \mathfrak{I},
$$
$$
\left( \text{resp.} \,\, f(r|x|, s u) \leq r^{\displaystyle \max_{1 \leq i \leq k_0} \{\alpha_i\}} s^{\displaystyle \min_{1 \leq i \leq k_0} \{m_i\}} \|f_0\|_{L^\infty(B_1)} \right),
$$
where
$$
\|f_0\|_{L^\infty(B_1)} := \sup_{\overline{B_1}} f(|x|, u(x)).
$$
\end{example}

\subsection{Main results}

In this section, we present the main results of our manuscript. The first result concerns a higher regularity estimate for bounded viscosity solutions along the free boundary (or critical) points.

\begin{theorem}[{\bf Higher regularity estimates}]\label{Hessian_continuity}
Let $u \in C^0(B_1)$ be a viscosity solution of problem \eqref{pobst}. Assume that $f(|x|, t) \simeq |x - x_0|^{\alpha} t_+^m$ with \eqref{EqHomog-f} in force. Then, for any point $x_0 \in B_{1/2} \cap \partial \{ u > 0 \}$, we have
\begin{equation}\label{Higher Reg}
\sup_{x \in B_r(x_0)} u(x) \leq \mathrm{C} \cdot r^{\frac{4 + \alpha}{3 - m}}
\end{equation}
for $r \in (0, 1/2)$, where $\mathrm{C} > 0$ is a universal constant\footnote{Throughout this manuscript, a universal constant is the one depending only on universal parameters, namely $m$, $\alpha$, and the dimension.}.
\end{theorem}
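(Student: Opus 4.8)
The plan is to run a compactness/blow-up argument anchored at the scaling-critical exponent $\beta:=\frac{4+\alpha}{3-m}$. This exponent is forced by homogeneity: $\Delta_\infty$ is three-homogeneous in $u$ and scales like $\rho^{4}$ under $x\mapsto\rho x$, so the dilation $u\mapsto u(\rho\,\cdot)/\rho^{\beta}$ turns $\Delta_\infty u=f(|x|,u)$ into an equation of the same type whose right-hand side carries the factor $\rho^{\,4-3\beta+\alpha+\beta m}$, and
\[
4-3\beta+\alpha+\beta m=0\quad\Longleftrightarrow\quad \beta=\frac{4+\alpha}{3-m};
\]
here $m<3$ gives $3-m>0$ and $\alpha>-\tfrac43 m$ gives $4+\alpha>0$, so $\beta>0$, which makes the dilation admissible. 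After translating so that $x_0=0$, normalizing $\|u\|_{L^\infty(B_1)}\le1$, and replacing $u$ by $u_+$ if the solution changes sign (which keeps it a viscosity subsolution of the absorption problem, and $\infty$-subharmonic), it suffices to prove $\sup_{B_r}u\le \mathrm C\,r^{\beta}$ for $r\in(0,\tfrac12)$ with $\mathrm C$ universal, where now $0\le u\le1$ in $B_1$, $u(0)=0$, and $0\in\partial\{u>0\}$.

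Suppose this fails for every $\mathrm C$. Then for each $j\in\mathbb N$ there is $r_j^{*}\in(0,\tfrac12)$ with $\sup_{B_{r_j^{*}}}u>j\,(r_j^{*})^{\beta}$. For $j$ large the continuous map $r\mapsto\sup_{B_r}u-j\,r^{\beta}$ is positive at $r_j^{*}$ and negative near $\tfrac12$ (since $\sup_{B_{1/2}}u\le1<j\,2^{-\beta}$), hence it has a largest root $r_j\in(r_j^{*},\tfrac12)$; thus $\sup_{B_{r_j}}u=j\,r_j^{\beta}$, $\sup_{B_r}u<j\,r^{\beta}$ for $r\in(r_j,\tfrac12)$, and $j\,r_j^{\beta}=\sup_{B_{r_j}}u\le1$ forces $r_j\le j^{-1/\beta}\to0$, whence also $j\,r_j^{\beta}=\sup_{B_{r_j}}u\to u(0)=0$. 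Consider the blow-ups
\[
v_j(x):=\frac{u(r_j x)}{j\,r_j^{\beta}},\qquad x\in B_{1/(2r_j)} .
\]
Then $v_j\ge0$, $v_j(0)=0$, $\sup_{B_1}v_j=1$, and $\sup_{B_\rho}v_j\le\max\{1,\rho^{\beta}\}$ for $\rho<1/(2r_j)$ (from $\sup_{B_1}v_j=1$ when $\rho\le1$, and from the root property when $1<\rho<1/(2r_j)$), so $(v_j)$ is locally uniformly bounded on $\mathbb R^n$. Moreover $v_j$ solves $\Delta_\infty v_j=g_j$, and invoking \eqref{EqHomog-f} with spatial and value dilation factors $r_j$ and $j\,r_j^{\beta}\in(0,1)$ together with the exponent identity,
\[
\|g_j\|_{L^\infty(B_R)}\ \le\ \mathrm c_n\,\|f_0\|_{L^\infty(B_1)}\,r_j^{\,4+\alpha+\beta m-3\beta}\,j^{\,m-3}\ =\ \mathrm c_n\,\|f_0\|_{L^\infty(B_1)}\,j^{\,m-3},
\]
which tends to $0$ as $j\to\infty$ — the decay coming precisely from $m<3$.

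By the interior local Lipschitz (hence equicontinuity) estimates available for the inhomogeneous infinity-Laplacian with bounded right-hand side, together with a diagonal Arzel\`a--Ascoli argument, a subsequence of $(v_j)$ converges locally uniformly in $\mathbb R^n$ to some $v_\infty$; by stability of viscosity solutions under uniform limits (with $g_j\to0$), $v_\infty$ is an entire $\infty$-harmonic function with $v_\infty\ge0$, $v_\infty(0)=0$, and $v_\infty(x)\le\max\{1,|x|^{\beta}\}$. The Harnack inequality for nonnegative $\infty$-harmonic functions (equivalently, the strong minimum principle) then forces $v_\infty\equiv0$. But $v_j\to v_\infty$ uniformly on the fixed compact set $\overline{B_1}$, so $1=\sup_{\overline{B_1}}v_j\to\sup_{\overline{B_1}}v_\infty=0$, a contradiction. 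Hence a universal $\mathrm C$ exists, and undoing the translation and normalization yields \eqref{Higher Reg} for every $x_0\in B_{1/2}\cap\partial\{u>0\}$.

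I expect the main obstacle to lie in the compactness input of the last step: one needs the interior Lipschitz/equicontinuity estimate for $\Delta_\infty v=g$ with $g\in L^\infty$ and $v$ bounded, the stability of viscosity solutions, and the Harnack inequality for $\infty$-harmonic functions, and one must check these apply uniformly along the blow-up family $(v_j)$. A secondary, more bookkeeping-type difficulty is verifying that the rescaled heights $j\,r_j^{\beta}$ and profiles $v_j$ remain in the regime where \eqref{EqHomog-f} is applicable, so that $\|g_j\|_{L^\infty}\to0$ genuinely holds; this is exactly the point at which the structural restrictions $0\le m<3$ and $\alpha>-\tfrac43 m$ are used.
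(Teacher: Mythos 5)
Your argument is correct in substance, and it lands on the same critical exponent $\beta=\frac{4+\alpha}{3-m}$ via the same scaling heuristics, but the route is genuinely different from the paper's. The paper proves the dyadic decay inequality
\[
\mathfrak{s}_{j+1} \le \max\bigl\{\mathrm C\, 2^{-\hat\beta(j+1)},\ 2^{-\hat\beta}\mathfrak{s}_j\bigr\},\qquad \mathfrak{s}_j:=\sup_{B_{2^{-j}}}u,
\]
(the Caffarelli--Karp--Shahgholian discrete iteration), and its contradiction argument never passes to a limit: at each contradicting scale $j_k$ they normalize $v_k:=u(2^{-j_k}\cdot)/\mathfrak{s}_{j_k+1}$, observe $v_k(0)=0$, $\sup_{B_{1/2}}v_k=1$, and apply the explicit quantitative Harnack inequality (Theorem~\ref{Harnack inequality}) to the single function $v_k$ to get $1\le 9\cdot 0 + \mathrm c\,\|f_k\|_\infty^{1/3}\to 0$, an immediate contradiction for $k$ large. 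Your version chooses a largest contradicting radius $r_j$, rescales by $j\,r_j^{\beta}$, and then uses the soft triad \emph{compactness} (Lindgren's Lipschitz estimate, Theorem~\ref{Reg_Lipsc}), \emph{stability} of viscosity solutions, and the \emph{strong minimum principle} to produce an entire nonnegative $\infty$-harmonic limit $v_\infty$ with $v_\infty(0)=0$, hence $v_\infty\equiv0$, contradicting $\sup_{\overline{B_1}}v_j=1$. The paper's proof is more elementary and purely quantitative (it needs only the Harnack constants, no Arzel\`a--Ascoli and no stability theorem); your proof is the standard blow-up template, which trades the explicit Harnack constant for an extra compactness layer, and in exchange it does not require first establishing the equivalence between the supremum estimate and the discrete dyadic iteration. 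Both routes use the normalization $v_k(0)=0$ in the same decisive way. One remark on a point you flagged yourself: your replacement of $u$ by $u_+$ turns the blow-ups into \emph{subsolutions} only, so the limit $v_\infty$ is a priori merely $\infty$-subharmonic, to which the strong \emph{minimum} principle does not apply; you should instead assume $u\ge 0$ near the free boundary point (which the paper also does implicitly, since its bound $0\le v_k$ in \eqref{vk1} requires it). With that harmless strengthening, your proof is sound and gives the stated estimate.
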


\begin{remark}\label{Remark1.1} 
Our results remain significant even in the absence of strong absorption, i.e., when $m = 0$. In this case, we obtain sharper estimates than those associated with the zero-obstacle problem (cf. \cite{RTU15}), as well as those related to the $C^{1, \frac{1}{3}}$-regularity conjecture (cf. \cite{daSRosSal19}), since
$$
\frac{4 + \alpha}{3} > \frac{4}{3} \quad \Leftrightarrow \quad \alpha > 0.
$$
Moreover, even when dealing with a singular weight, i.e., $\alpha < 0$ (recall the assumption in \eqref{EqHomog-f}), we still achieve improved estimates, since
$$
\frac{4 + \alpha}{3 - m} > \frac{4}{3} \quad \Leftrightarrow \quad \alpha > -\frac{4}{3} m.
$$

\begin{tikzpicture}
  \begin{axis}[
    axis lines=middle,
    xlabel={$m$},
    ylabel={$\alpha$},
    ymin=-4, ymax=5,
    xmin=0, xmax=3,
    samples=200,
    domain=0:3,
    width=10cm, height=8cm,
    grid=both,
    xlabel style={right},
    ylabel style={above}
  ]
    \addplot [
      name path=A,
      domain=0:3,
      samples=200,
      thick
    ] {-4/3 * x};
    
    \path[name path=B] (axis cs:0,5) -- (axis cs:3,5);
    
    \addplot [
      fill=red!60,
      opacity=0.3
    ] fill between[of=A and B];

    \node at (axis cs:1,3) [anchor=west] {$\alpha > -\frac{4}{3}m$};
  \end{axis}
\end{tikzpicture}

\end{remark}

\begin{remark}
Another important point to emphasize in Theorem \ref{Hessian_continuity} is the following: Let $x_0 \in \Omega$, where $0 \in \Omega$, and consider the linear transformation
$$
\mathrm{T}_{x_0}: \mathbb{R}^n \rightarrow \mathbb{R}^n \quad \text{such that} \quad \mathrm{T}_{x_0}(x) = x - x_0.
$$
Since the diffusion model
\begin{equation}\label{EqWeight}
\Delta_{\infty} u(x) = \mathrm{f}(|x|) \quad \text{in} \quad \Omega    
\end{equation}
is invariant under the transformation $z=\mathrm{T}_{x_0}(x) = x-x_0$, where
$$
\mathrm{f} \in C^0([0, \infty)) \quad \text{and} \quad  |\mathrm{f}(|z|)| \le \mathrm{c}_n |z|^{\alpha}, \quad \text{for} \quad \alpha > 0,
$$
then a viscosity solution belongs to $C^{\frac{4 + \alpha}{3}}$ at $x_0$.

In particular, this behavior occurs when the weight function is a multiple of the distance to a fixed closed set. Specifically, let $\mathrm{F} \subset\subset \Omega$ be a fixed closed set, and let $\mathrm{f}(|x|) = \mathrm{dist}^{\alpha}(x, \mathrm{F})$ for a given $\alpha > 0$. Then, viscosity solutions to \eqref{EqWeight} belong to $C^{\frac{4 + \alpha}{3}}$ along the set $\mathrm{F}$. Moreover, 
$$
\sup_{B_r(x_0) \atop{x_0 \in \mathrm{F}}} |u(x) - u(x_0)| \leq \mathrm{C}(\text{universal}) \cdot r^{\frac{4 + \alpha}{3}}.
$$

As an illustration, consider the function
$$
\Psi_0(x) = \left(\frac{3^4}{(4 + \alpha)^3(1 + \alpha)}\right)^{\frac{1}{3}}\left(|x - x_0| - r_0\right)_{+}^{\frac{4 + \alpha}{3}},
$$
which is a viscosity solution of 
$$
\Delta_\infty \Psi_0(x) = \mathrm{dist}^{\alpha}\left(x, \overline{B_{r_0}(x_0)}\right) \quad \text{in} \quad \mathbb{R}^n \setminus B_{r_0}(x_0) \quad \text{and} \quad \Psi_0 \in C^{\frac{4 + \alpha}{3}}(\mathbb{R}^n).
$$
\end{remark}

We also establish the sharp growth rate for the gradient of solutions near free boundary points. For this purpose, for any point $z \in \{ u > 0\} \cap B_{1/2}$, let us define $\zeta(z) \in \partial \{ u > 0\}$ such that 
$$
    |z - \zeta(z)| = \text{dist} \left( z, \partial \{ u > 0\} \right).
$$

\begin{corollary}[{\bf Gradient growth}]\label{gradiente control}
Let $u \in C^0(B_1)$ be a viscosity solution to \eqref{pobst} in $B_1$, and suppose that \eqref{EqHomog-f} holds. Then, $u$ is locally Lipschitz continuous. Moreover, for any point 
$x_0 \in \{ u > 0 \} \cap B_{1/2}$, such that $f(|x|, t) \simeq |x - \zeta(x_0)|^{\alpha} t_+^m$, we have
\[
|D u(x_0)| \leq \mathrm{C}_0 \cdot \mathrm{dist}(x_0, \partial \{ u > 0 \})^{\frac{1+\alpha+m}{3-m}},
\]
where $\mathrm{C}_0 > 0$ is a universal constant.
\end{corollary}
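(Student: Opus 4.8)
The plan is to bootstrap the pointwise bound of Theorem~\ref{Hessian_continuity} into a gradient estimate by means of a normalized rescaling centered at $x_0$. For the \emph{local Lipschitz} assertion: since $u\in C^0(B_1)$ is locally bounded and \eqref{EqHomog-f} (after the normalization used to define $f_0$) makes $x\mapsto f(|x|,u(x))$ locally bounded, $u$ is a viscosity solution of $\Delta_\infty u=g$ with $g\in L^\infty_{\mathrm{loc}}(B_1)$; the local Lipschitz regularity then follows from the standard theory for the inhomogeneous $\infty$-Laplacian (comparison with cone-type/power barriers), which moreover furnishes an a priori estimate $\|Du\|_{L^\infty(B_{1/2})}\le \mathrm{C}\big(\|u\|_{L^\infty(B_1)},\|g\|_{L^\infty}\big)$. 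This already settles the claimed bound at points $x_0$ whose distance $d:=\mathrm{dist}(x_0,\partial\{u>0\})$ to the free boundary is bounded below by a universal constant, so from now on I assume $d$ small.

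Next, fix $x_0\in\{u>0\}\cap B_{1/2}$ with $d$ small and set $\zeta_0:=\zeta(x_0)\in\partial\{u>0\}$, so that $|x_0-\zeta_0|=d$ and $B_d(x_0)\subset\{u>0\}$. Since $B_d(x_0)\subset B_{2d}(\zeta_0)$, Theorem~\ref{Hessian_continuity} applied at $\zeta_0$ (valid up to a harmless shrinking of the ambient ball, since $d$ is small) gives
\[
\sup_{B_d(x_0)}u\ \le\ \sup_{B_{2d}(\zeta_0)}u\ \le\ \mathrm{C}\,(2d)^{\frac{4+\alpha}{3-m}}.
\]
I then introduce the normalized rescaling
\[
v(y)\ :=\ \frac{u(x_0+dy)}{d^{\frac{4+\alpha}{3-m}}},\qquad y\in B_1,
\]
so that $0<v\le \mathrm{C}$ in $B_1$, with $\mathrm{C}$ universal. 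Using the $3$-homogeneity of $\Delta_\infty$ (which produces the factor $d^{4}\big(d^{\frac{4+\alpha}{3-m}}\big)^{-3}$), the hypothesis $f(|x|,t)\simeq|x-\zeta_0|^{\alpha}t_+^m$, and the fact that $\tfrac d2\le|x_0+dy-\zeta_0|\le\tfrac{3d}2$ for $y\in B_{1/2}$, every power of $d$ cancels --- precisely because $\frac{4+\alpha}{3-m}$ is the scaling exponent of the equation --- so that $v$ is a viscosity solution of
\[
\Delta_\infty v\ =\ \tilde f\quad\text{in }B_{1/2},\qquad |\tilde f(y)|\ \le\ \mathrm{C}\,v_+^m(y)\ \le\ \mathrm{C},
\]
with $\mathrm{C}$ universal.

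Then I apply the interior gradient (Lipschitz) estimate for the inhomogeneous $\infty$-Laplacian to $v$ on $B_{1/2}$, evaluated at the center: $|Dv(0)|\le \mathrm{C}\big(\|v\|_{L^\infty(B_{1/2})}+\|\tilde f\|_{L^\infty(B_{1/2})}^{1/3}\big)\le \mathrm{C}_0$, universal. Undoing the scaling, $Dv(0)=d^{\,1-\frac{4+\alpha}{3-m}}Du(x_0)$, hence
\[
|Du(x_0)|\ =\ d^{\,\frac{4+\alpha}{3-m}-1}\,|Dv(0)|\ \le\ \mathrm{C}_0\,d^{\,\frac{1+\alpha+m}{3-m}},
\]
using $\frac{4+\alpha}{3-m}-1=\frac{1+\alpha+m}{3-m}$; note this exponent is positive because $\alpha>-\tfrac43 m$ and $m<3$, consistently with the fact that $Du$ must vanish along $\partial\{u>0\}$. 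This is the asserted inequality.

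The step requiring the most care is the behavior of the weight under the rescaling: the comparability $|x_0+dy-\zeta_0|\simeq d$ degenerates as $|y|\to1$ (where $x_0+dy$ may reach $\zeta_0$), so for a singular weight ($\alpha<0$) the rescaled forcing $\tilde f$ could be unbounded near $\partial B_1$. This is circumvented by running the normalized equation only on $B_{1/2}$ (where the weight is comparable to $d^\alpha$) and invoking the interior estimate at the center $0$; one also has to check that $\zeta_0$ lies within the region of validity of Theorem~\ref{Hessian_continuity}, which for small $d$ is automatic after a routine shrinking of the base ball. The remaining work --- the exponent bookkeeping and the cancellation of the powers of $d$ in the rescaled equation --- is the elementary algebra indicated above.
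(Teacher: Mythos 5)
Your proof is correct and follows essentially the same strategy as the paper's: appeal to Theorem~\ref{Hessian_continuity} to bound $\sup_{B_d(x_0)}u$ via $\sup_{B_{2d}(\zeta_0)}u$, rescale by $v(y)=u(x_0+dy)/d^{(4+\alpha)/(3-m)}$, check that the scaled right-hand side has a universal $L^\infty$ bound (using that the exponent $(4+\alpha)/(3-m)$ is precisely the scaling exponent of the equation), and then invoke the interior Lipschitz estimate of Theorem~\ref{Reg_Lipsc} at the center before undoing the scaling. The one genuine refinement you add is the observation that, for a singular weight ($\alpha<0$), the scaled forcing $d^{4-3\beta}f(|x_0+dy|,d^\beta v)$ need not be uniformly bounded on all of $B_1$, since $|x_0+dy-\zeta_0|$ can vanish as $|y|\to 1$; the paper's proof asserts $\sup_{B_1}g\le\mathfrak{L}_\ast$ and applies Theorem~\ref{Reg_Lipsc} on the pair $(B_1,B_{1/2})$ without comment, whereas you correctly restrict the a priori bound to $B_{1/2}$ (where $|x_0+dy-\zeta_0|\simeq d$) and use the interior estimate there, which is what is actually needed. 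Your explicit handling of the trivial regime where $d$ is bounded below by a universal constant (so the global Lipschitz bound already implies the claim) is also a sound, if routine, addition.
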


From now on, the critical set of solutions  consists of 
$$
\mathcal{Z}^{\prime}_0(u, \Omega) = \{ x \in \Omega : u(x) = | Du(x)| = 0\}. 
$$

Next, we establish a weak geometric property, specifically a non-degeneracy estimate. More precisely, a viscosity solution $u$ departs from the free boundary at a rate proportional to $r^{\frac{4+\alpha}{3-m}}$.

\begin{theorem}[{\bf Non-degeneracy at critical points}]\label{NãoDeg}
Let $m \in [0, 3)$ and $u \in C^0(B_1)$ be a viscosity solution to problem \eqref{pobst}. Then, there exists $r^{\ast} > 0$ such that for every critical point $x_0 \in B_1$ and for all $r \in (0, r^{\ast})$ such that $B_r(x_0) \subset B_1$, we have
$$
\sup_{\partial B_r(x_0)}  u(x) \geq \left(\frac{(3-m)^4}{(4+\alpha)^3(1+\alpha+m)}\right)^{\frac{1}{3-m}} \cdot r^{\frac{4+\alpha}{3-m}}.
$$
\end{theorem}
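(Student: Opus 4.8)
The strategy is to compare $u$ with the explicit radial profile that realizes the extremal growth rate. I will set
\[
\beta := \frac{4+\alpha}{3-m}, \qquad \mathrm{c}_0 := \left(\frac{(3-m)^4}{(4+\alpha)^3(1+\alpha+m)}\right)^{\frac{1}{3-m}} = \bigl(\beta^{3}(\beta-1)\bigr)^{-\frac{1}{3-m}},
\]
and, for a fixed pole $x_0$, consider $\Phi_{x_0}(x):=\mathrm{c}_0\,|x-x_0|^{\beta}$. The running hypotheses $0\le m<3$ and $\alpha>-\tfrac{4}{3}m$ give $1+\alpha+m>0$ (so $\mathrm{c}_0$ is well defined) and $\beta>1$ (so $\Phi_{x_0}\in C^1(\mathbb{R}^n)$, smooth away from $x_0$). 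Using $\Delta_{\infty}\bigl(g(|x-x_0|)\bigr)=(g')^2 g''$ for radial profiles, a one-line computation shows that for $x\ne x_0$
\[
\Delta_{\infty}\Phi_{x_0}(x)=\mathrm{c}_0^{3}\,\beta^{3}(\beta-1)\,|x-x_0|^{3\beta-4}=|x-x_0|^{\alpha}\,\Phi_{x_0}^{m}(x),
\]
because $3\beta-4=\alpha+\beta m$ and $\mathrm{c}_0^{3-m}=(\beta^{3}(\beta-1))^{-1}$ are exactly the relations that define $\beta$ and $\mathrm{c}_0$; this is what forces the sharp constant in the statement. I then record that $x_0$ is a removable point for $\Phi_{x_0}$: when $1<\beta<2$ no smooth function can touch $\Phi_{x_0}$ from above at $x_0$, while for $\beta\ge 2$ the test-jet inequality is verified directly, so $\Phi_{x_0}$ is a viscosity solution — in particular a viscosity supersolution — of $\Delta_{\infty}w=|x-x_0|^{\alpha}w_+^{m}$ in all of $B_1$.

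Next I prove the estimate for $x_0\in\{u>0\}\cap B_1$ and $r\in(0,r^{\ast})$ with $B_r(x_0)\subset B_1$, where $r^{\ast}$ is fixed small enough that, after the customary normalization of \eqref{pobst}--\eqref{EqHomog-f} (rescaling $u$ and the domain), one has $\Delta_{\infty}u=f(|x|,u)\ge |x-x_0|^{\alpha}u_+^{m}$ throughout $B_r(x_0)\cap\{u>0\}$; this is the only point at which the behavior of the weight near the pole enters. I consider the open set $A:=\{x\in B_r(x_0):u(x)>\Phi_{x_0}(x)\}$, which is nonempty since $u(x_0)>0=\Phi_{x_0}(x_0)$. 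If $\overline{A}$ meets $\partial B_r(x_0)$, I pick $z$ in the intersection and conclude by continuity that $u(z)\ge\Phi_{x_0}(z)=\mathrm{c}_0 r^{\beta}$, which is exactly the claimed bound. If instead $A\subset\subset B_r(x_0)$, then $u=\Phi_{x_0}$ on $\partial A$; on $A$ the solution $u$ is a viscosity subsolution and $\Phi_{x_0}$ a viscosity supersolution of $\Delta_{\infty}w=|x-x_0|^{\alpha}w_+^{m}$, whose zero-order term $w\mapsto w_+^{m}$ is nondecreasing, so the comparison principle forces $u\le\Phi_{x_0}$ in $A$, contradicting $u>\Phi_{x_0}$ there. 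Hence only the first alternative can occur and $\sup_{\partial B_r(x_0)}u\ge\mathrm{c}_0 r^{\beta}$.

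Finally I pass to free boundary (critical) points. Given $x_0\in\overline{\{u>0\}}$, I take $x_k\in\{u>0\}$ with $x_k\to x_0$, apply the previous step at $x_k$ with radius $r-|x_k-x_0|$, and let $k\to\infty$ to obtain $\sup_{\overline{B_r(x_0)}}u\ge\mathrm{c}_0 r^{\beta}$; since $\Delta_{\infty}u=f(|x|,u)\ge 0$ in $\{u>0\}$ makes $u$ infinity-subharmonic there, the maximum of $u$ over $\overline{B_r(x_0)}$ is attained on $\partial B_r(x_0)\cup\{u=0\}$, and as $\mathrm{c}_0 r^{\beta}>0$ it must in fact be attained on $\partial B_r(x_0)$, which yields the assertion. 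I expect the main obstacle to be the two technical ingredients that make the barrier comparison legitimate \emph{with the sharp constant}: the removability of the pole $x_0$ for $\Phi_{x_0}$ (which is precisely where $\beta>1$, i.e. $\alpha>-\tfrac{4}{3}m$, is indispensable) and the normalization guaranteeing $\Delta_{\infty}u\ge |x-x_0|^{\alpha}u_+^{m}$ near $x_0$, so that $u$ is genuinely a subsolution of the barrier's equation inside $A$.
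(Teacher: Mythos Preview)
Your barrier $\Phi_{x_0}$ and the constant $\mathrm{c}_0$ are exactly the ones the paper uses, and the overall scheme---compare with the radial profile and force the set $\{u>\Phi_{x_0}\}$ to reach $\partial B_r(x_0)$---is the same. The difference, and the gap, lies in how the comparison is justified.

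You assert that ``the comparison principle forces $u\le\Phi_{x_0}$ in $A$'' because both functions are viscosity sub/super\-solutions of the \emph{same} equation $\Delta_\infty w=|x-x_0|^{\alpha}w_+^{m}$ and $w\mapsto w_+^{m}$ is nondecreasing. But the only comparison tool available in this setting is Lemma~\ref{LemmaCP}, and it requires a \emph{strict} inequality $f_1(x,t)>f_2(x,t)$ between the two right-hand sides. With the same equation on both sides that hypothesis fails outright, and since $|x-x_0|^{\alpha}t_+^{m}$ vanishes at $x_0\in A$ and along $\{t=0\}$ you cannot manufacture a strict gap afterwards. Comparison for the $\infty$-Laplacian without such a gap is genuinely delicate---recall that uniqueness for $\Delta_\infty u=f(x)$ may fail once $f$ is not sign-definite, cf.\ \cite{LuWang08}---and monotonicity of the zero-order term alone is not known to suffice for this highly degenerate operator.

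This is precisely why the paper routes the argument through the penalized problem~\eqref{Pe}: the auxiliary solution $u_\varepsilon$ satisfies $\Delta_\infty u_\varepsilon=|x|^{\alpha}(u_\varepsilon)_+^{m}+\varepsilon$, so the added $\varepsilon$ supplies the strict gap needed to invoke Lemma~\ref{LemmaCP} against the barrier $\Psi$, and one then recovers the estimate for $u$ by sending $\varepsilon\to0^{+}$ via the stability Lemma~\ref{stability} together with uniqueness (Corollary~\ref{Uniqueness}). Your proof becomes rigorous if you insert this penalization layer (or supply and prove an independent comparison principle for $\Delta_\infty w=c(x)w_+^{m}$, $c\ge0$, that dispenses with the strict gap). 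As written, the passage from ``$u=\Phi_{x_0}$ on $\partial A$'' to ``$u\le\Phi_{x_0}$ in $A$'' is the missing step.
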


We will now explore the various aspects of model \eqref{pobst} by examining a simpler one-dimensional profile.

\begin{example}[{\bf A one-dimensional profile}]\label{Examp01}
Consider the one-dimensional function
\[
u_i(x_1, \ldots, x_n) = \left(\frac{(3-m)^4}{(4+\alpha)^3(1+\alpha+m)}\right)^\frac{1}{3-m} |x_i|^{\frac{4+\alpha}{3-m}} \quad \text{for} \quad x \in B_1.
\]
We have that $u_i$ satisfies, in the viscosity sense,
$$
\Delta_\infty u_i(x) = 
\left\{
\begin{array}{lcl}
   |x_i|^{\alpha} (u_i)_+^m(x) \lesssim  |x|^{\alpha} u_+^m(x)  & \mbox{in} \quad B_1, &\mbox{if} \quad \alpha, m \neq 0, \\
    |x_i|^{\alpha} \lesssim  |x|^{\alpha} & \mbox{in} \quad B_1, & \mbox{if} \quad \alpha \neq 0, \,\,m=0, \\
    (u_i)_+^m(x) & \mbox{in} \quad B_1, & \mbox{if} \quad \alpha=0, \,\, m \neq 0, \\
    1 & \mbox{in} \quad B_1, & \mbox{if} \quad \alpha, m=0,
\end{array}
\right.
$$
and
{\scriptsize{\[
\mathcal{S}_u(\Omega) = \{x \in B_1: |D u| = 0\} = \{x \in B_1: \,\, x_i = 0\} \quad \text{with} \quad \mathscr{H}_{\text{dim}}(\mathcal{S}_u(\Omega)) = 1.
\]}}
where \( \mathscr{H}_{\text{dim}} \) denotes the Hausdorff dimension. In particular, we have that $u$ belongs to $C_{\text{loc}}^{1,\frac{1+\alpha+m}{3-m}}(B_1)$. Moreover, if the conditions on the exponents in \eqref{EqHomog-f} hold, then
$$
1 + \alpha + m > 1 - \frac{4}{3}m + m = 1 - \frac{m}{3} > 0.
$$
\end{example}

\begin{remark}
We must highlight some consequences of the previous example. Let $\beta = \frac{1 + \alpha + m}{3 - m}$. Then, we observe that along critical points
$$
\left\{
\begin{array}{ccccc}
  \beta \in (0, 1) & \Leftrightarrow & \alpha < 2(1 - m) & \Rightarrow & u \,\,\text{belongs to } \,\,C^{1, \beta} \,\, \text{at the origin}, \\
  \beta = 1 & \Leftrightarrow & \alpha = 2(1 - m) & \Rightarrow & u \,\,\text{belongs to } \,\,C^{1, 1} \,\, \text{at the origin}, \\
  \beta > 1 & \Leftrightarrow & \alpha > 2(1 - m) & \Rightarrow & u \,\,\text{belongs to } \,\,C^{2} \,\, \text{at the origin}.
\end{array}
\right.
$$
In particular, weak solutions enjoy classical estimates along the set of critical points, provided $\alpha > 2(1 - m)$ (see, e.g., Example \ref{Examp01}).
\end{remark}

\begin{example}
Another relevant model for \eqref{pobst} is the Hardy-H\'{e}non-type problem with general weights, absorption, and noise terms:
$$
\displaystyle \Delta_{\infty} u (x) =  \sum_{i=1}^{l_0}  \mathrm{dist}^{\alpha_i}(x, \mathrm{F}_i) (u)_{+}^{m_i}(x) + g_i(|x|) \quad \text{in} \quad B_1,
$$
where $\mathrm{F}_i \subset B_1$ are disjoint closed sets, $m_i \in [0, 3)$, $\alpha_i > -\frac{4}{3}m_i$, and
$$
\displaystyle \limsup_{|x| \to 0} \frac{g_i(|x|)}{\mathrm{dist}(x, \mathrm{F}_i)^{\kappa_i}} = \mathfrak{L}_i \in [0, \infty) \quad \text{for some} \quad \kappa_i \ge 0.
$$
In this context, viscosity solutions belong to $ C_{\text{loc}}^{\displaystyle \min_{1 \le i \le l_0} \left\{\frac{4 + \alpha_i}{3 - m_i}, \frac{4 + \kappa_i}{3}\right\}}$ along the sets $\mathrm{F}_i \cap \mathcal{S}_u(B_1)$.
\end{example}

\subsection{Further results and implications}

Before presenting some applications of our findings, we will analyze radial solutions to our model equation to understand the classification results of global profiles, i.e., Liouville-type results.

\begin{example}[\textbf{A radial profile}]\label{RadialEx} 
Consider the function $\Psi: B_1 \rightarrow \mathbb{R}$ given by
$$
\Psi(x) = \mathrm{C}_{\alpha, m}|x|^{\frac{4 + \alpha}{3 - m}},
$$
where $\mathrm{C}_{\alpha, m} > 0$ is a constant that will be chosen \textit{a posteriori}. Since $\Psi$ is radial, we have
{\scriptsize{
\begin{eqnarray*}
\Delta_\infty \Psi(x) &=& \left(\Psi'(x)\right)^2 \Psi''(x) \\[0.2cm]
&=& \left[\mathrm{C}_{\alpha, m} \left(\frac{4 + \alpha}{3 - m}\right)|x|^{\frac{4 + \alpha}{3 - m} - 1}\right]^2 \left[\mathrm{C}_{\alpha, m} \left(\frac{4 + \alpha}{3 - m}\right) \left(\frac{1 + \alpha + m}{3 - m}\right)|x|^{\frac{4 + \alpha}{3 - m} - 2}\right] \\[0.2cm]
&=& \frac{(4 + \alpha)^3}{(3 - m)^4}(1 + \alpha + m)\mathrm{C}_{\alpha, m}^3 |x|^{3\left(\frac{4 + \alpha}{3 - m}\right) - 4}.
\end{eqnarray*}
}}
Thus, if we choose
$$
\mathrm{C}_{\alpha, m} = \left(\frac{(3 - m)^4}{(4 + \alpha)^3(1 + \alpha + m)}\right)^{\frac{1}{3 - m}},
$$
we conclude that $\Psi$ is a viscosity solution of
$$
\Delta_\infty \Psi(x) = |x|^\alpha \Psi_+^m(x) \quad \text{in} \quad B_1
$$
and
\[
\mathcal{S}_\Psi(\Omega) = \{x \in B_1: |D \Psi| = 0\} = \{x \in B_1: x = 0\} \quad \text{with} \quad \mathscr{H}_{\text{dim}}(\mathcal{S}_\Psi(\Omega)) = 0.
\]
In particular, we have that $u$ belongs to $C_{\text{loc}}^{1, \frac{1 + \alpha + m}{3 - m}}(B_1)$.
\end{example}

Liouville-type theorems are well-known in the context of elliptic PDEs and have played an important role in the modern theory of mathematical analysis due to their applications in nonlinear equations, free boundary problems, and differential geometry, to mention just a few topics. 

The main purpose of this subsection is to prove that a global solution to 
$$
\Delta_{\infty} u(x) = |x|^{\alpha} u^m(x) \quad \text{in} \quad \mathbb{R}^n
$$
must grow faster than $\mathrm{c}_0|x|^{\frac{4+\alpha}{3-m}}$ as $|x| \to \infty$ for a suitable constant $\mathrm{c}_0 > 0$, unless it is identically zero.

As an application of our findings, we will present some Liouville-type results for a class of Hardy-H\'{e}non-type equations (cf. \cite{BisVo23} for related results). Specifically,

\begin{theorem}[{\bf Liouville-type result $\mathrm{I}$}]\label{Liouville I}  
Let $u$ be a non-negative entire viscosity solution to
$$
\Delta_{\infty} u(x) = |x|^{\alpha} u^m(x) \quad \text{in} \quad \mathbb{R}^n
$$
with $u(0) = 0$. Suppose that
\begin{equation}\label{hypothesis}
\displaystyle \lim_{|x| \to \infty} \frac{u(x)}{|x|^{\frac{4+\alpha}{3-m}}} = 0,
\end{equation}
then $u \equiv 0$.
\end{theorem}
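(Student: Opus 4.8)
The plan is to argue by contradiction: assume $u\not\equiv0$ and exhibit a point at which the non-degeneracy estimate of Theorem~\ref{NãoDeg}, once promoted to all scales, forces $u$ to grow incompatibly with \eqref{hypothesis}. Since $u\not\equiv0$ and $u(0)=0$, either $0\in\partial\{u>0\}$ or $u$ vanishes identically on some ball centered at the origin; I treat the first (main) case here. Observe first that the exponent $\tfrac{4+\alpha}{3-m}$ exceeds $1$: indeed $4+\alpha>3-m\Leftrightarrow\alpha+m>-1$, and $\alpha>-\tfrac43 m$ with $m<3$ gives $\alpha+m>-\tfrac{m}{3}>-1$. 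Hence the higher regularity estimate \eqref{Higher Reg} of Theorem~\ref{Hessian_continuity}, applied at the free boundary point $x_0=0$ (the weight $|x|^{\alpha}$ being exactly $|x-x_0|^{\alpha}$ with $x_0=0$), gives $u(x)\leq\mathrm{C}\,|x|^{\frac{4+\alpha}{3-m}}=o(|x|)$; combined with $u\geq0=u(0)$, this makes $u$ differentiable at the origin with $Du(0)=0$, so $0\in\mathcal{Z}'_0(u,\mathbb{R}^n)$. (If instead $u\equiv0$ near the origin, one runs the same scheme at the nearest genuine free boundary point $x_0$, $d_0:=\mathrm{dist}(0,\partial\{u>0\})>0$, exploiting that on a large ball $B_R(x_0)$ one has $|x|^{\alpha}\gtrsim(R-d_0)^{\alpha}$, which restores the $R^{\alpha}$ factor in the non-degeneracy constant at large scales.)

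Next I would globalize the non-degeneracy using the scale invariance of the equation. For $R>0$ put $u_R(x):=R^{-\frac{4+\alpha}{3-m}}u(Rx)$; a direct computation of $Du_R$ and $D^2u_R$ — the power of $R$ produced being $4-(3-m)\tfrac{4+\alpha}{3-m}+\alpha=0$ — shows that $u_R$ is again a non-negative entire viscosity solution of $\Delta_{\infty}v=|x|^{\alpha}v^m$, with $0$ still a critical point on $\partial\{u_R>0\}$; this is precisely why $\tfrac{4+\alpha}{3-m}$ is the natural exponent. Applying Theorem~\ref{NãoDeg} to $u_R$ at the origin (on $\mathbb{R}^n$ the inclusion $B_r(x_0)\subset B_1$ is automatic, and a fixed $r<r^{\ast}$ can be kept while $R$ varies) and undoing the rescaling, I obtain for every $\rho>0$
$$
\sup_{\partial B_\rho}u\;\geq\;\left(\frac{(3-m)^4}{(4+\alpha)^3(1+\alpha+m)}\right)^{\frac{1}{3-m}}\rho^{\frac{4+\alpha}{3-m}}.
$$
But \eqref{hypothesis} asserts exactly that $\sup_{\partial B_\rho}u=o\big(\rho^{\frac{4+\alpha}{3-m}}\big)$ as $\rho\to\infty$, contradicting this uniform positive lower bound; hence $u\equiv0$.

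I expect the delicate point to be the globalization step — confirming that the estimate of Theorem~\ref{NãoDeg} truly survives rescaling at every scale. This is transparent when the critical point used is the center of the weight (the origin), since then $u_R$ solves the identical equation; extra care is needed only in the reduction for a dead core detached from $0$, where the weight center and the critical point differ and one must verify that the growth of $|x|^{\alpha}$ over large balls is absorbed into the non-degeneracy constant so as to reproduce the rate $\rho^{\frac{4+\alpha}{3-m}}$. The remaining ingredients — invariance of the PDE under the chosen scaling, that $0$ is a bona fide free boundary critical point (rather than an interior zero) at which Theorem~\ref{NãoDeg} is applicable, and that $u_R\to0$ locally uniformly in accordance with \eqref{hypothesis} — are routine.
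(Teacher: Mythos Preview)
Your approach is correct and takes a genuinely different route from the paper's. Both proofs exploit the same scale invariance --- the rescaling $u_R(x):=R^{-\frac{4+\alpha}{3-m}}u(Rx)$ sends solutions to solutions --- but the paper then appeals to the \emph{upper} growth bound of Theorem~\ref{Hessian_continuity}, showing that $\sup_{\overline{B}_r}u_k\to0$ and deriving a contradiction through a further case analysis on whether $|kx_k|$ stays bounded, whereas you invoke the \emph{lower} growth bound of Theorem~\ref{NãoDeg} to obtain directly $\sup_{\partial B_\rho}u\geq\mathrm{C}_{\alpha,m}\,\rho^{\frac{4+\alpha}{3-m}}$ for every $\rho>0$, which is plainly incompatible with~\eqref{hypothesis}. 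Your route is the more transparent one: non-degeneracy is precisely the assertion that a nontrivial solution cannot fall below the critical rate, so once scaling promotes it to all radii the Liouville conclusion is immediate.

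One simplification: the case split and the detour through an off-center free boundary point are unnecessary, and the sketch there is the weakest part of your write-up (the claimed bound $|x|^{\alpha}\gtrsim(R-d_0)^{\alpha}$ on $B_R(x_0)$ fails near $x=0\in B_R(x_0)$, and for $\alpha<0$ the inequality goes the wrong way). You can stay at the origin throughout: since $u(0)=0$, and either $0\in\partial\{u>0\}$ (where Theorem~\ref{Hessian_continuity} gives $Du(0)=0$, as you observe) or $u$ vanishes on a neighborhood of $0$ (where $Du(0)=0$ trivially), the origin always lies in $\mathcal{Z}'_0(u,\mathbb{R}^n)$; and the barrier $\Psi(x)=\mathrm{C}_{\alpha,m}|x|^{\frac{4+\alpha}{3-m}}$ in the proof of Theorem~\ref{NãoDeg} is centered exactly at the singularity of the weight, so that theorem applies to every $u_R$ at $x_0=0$ without any translation. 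Finally, make explicit that $r^{\ast}$ in Theorem~\ref{NãoDeg} is universal --- inspection of its proof shows the comparison with $\Psi$ is valid for every $r$ with $B_r\subset B_1$ --- so that a single $r_0\in(0,1)$ serves uniformly in $R$.
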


\begin{theorem}[{\bf Liouville-type result $\mathrm{II}$}]\label{Liouville II} Given $r>0$ and $x_0 \in \mathbb{R}^n$, let $u$ be a non-negative entire viscosity solution to
$$
\Delta_{\infty} u(x) = \mathfrak{h}_{r, x_0}(|x|) u^m(x) \quad \text{in} \quad \mathbb{R}^n, 
$$
where the weight $\mathfrak{h}_{r, x_0}(|x|) \lesssim (|x-x_0|-r)_+^{\alpha}$. Suppose that
\begin{equation}\label{Liouville II-H}
\displaystyle \limsup_{|x| \to \infty} \frac{u(x)}{|x|^{ \frac{4+\alpha}{3-m}}} < \left(\frac{(3-m)^4}{(4+\alpha)^3(1+\alpha+m)}\right)^{\frac{1}{3-m}},
\end{equation}
then $u \equiv 0$.
\end{theorem}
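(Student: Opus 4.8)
I would argue by contradiction: assume $u\not\equiv0$ and produce points escaping to infinity along which $u$ grows at least like the extremal profile, contradicting \eqref{Liouville II-H}. Write $C_\star:=\bigl(\tfrac{(3-m)^4}{(4+\alpha)^3(1+\alpha+m)}\bigr)^{1/(3-m)}$ and set
\[
\Psi_{x_0,r}(x):=C_\star\,(|x-x_0|-r)_+^{\frac{4+\alpha}{3-m}},
\]
the explicit profile from Example~\ref{RadialEx} and the discussion preceding this theorem; recall $\Delta_\infty\Psi_{x_0,r}=(|x-x_0|-r)_+^{\alpha}(\Psi_{x_0,r})^m$ on $\mathbb{R}^n$, that $\Psi_{x_0,r}\equiv0$ on $\overline{B_r(x_0)}$, and that it is $C^1$ across $\partial B_r(x_0)$ with vanishing gradient there.

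\emph{Step 1 — force a dead core, hence a free boundary point.} Since $\mathfrak h_{r,x_0}(|x|)$ is comparable to $(|x-x_0|-r)_+^{\alpha}$ and vanishes on $\overline{B_r(x_0)}$, a suitably normalized multiple of $\Psi_{x_0,r}$ should serve as a viscosity super‑barrier for $w\mapsto\Delta_\infty w-\mathfrak h_{r,x_0}(|x|)w^m$ on every ball $B_S(x_0)$, $S>r$. The leverage comes from the \emph{strict} inequality in \eqref{Liouville II-H} together with the fact that the constant there is exactly $C_\star$: picking $\varepsilon_0>0$ with $u(x)\le(C_\star-\varepsilon_0)|x|^{(4+\alpha)/(3-m)}$ for $|x|$ large, on $\partial B_S(x_0)$ one gets $u(x)\le(C_\star-\varepsilon_0)(S+|x_0|)^{(4+\alpha)/(3-m)}<C_\star(S-r)^{(4+\alpha)/(3-m)}=\Psi_{x_0,r}(x)$ as soon as $S$ is large, since the ratio of the two sides tends to $(C_\star-\varepsilon_0)/C_\star<1$. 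The comparison principle on each $B_S(x_0)$ then yields $u\le\Psi_{x_0,r}$ there, and $S\to\infty$ gives $u\le\Psi_{x_0,r}$ on $\mathbb{R}^n$; in particular $u\equiv0$ on $\overline{B_r(x_0)}$. As $u\not\equiv0$, the set $\{u>0\}$ is a nonempty proper open subset of $\mathbb{R}^n$, so it has nonempty boundary; fix $x_*\in\partial\{u>0\}$, and note that by the gradient decay of Corollary~\ref{gradiente control} one has $Du(x_*)=0$, i.e. $x_*\in\mathcal{Z}'_0(u,\mathbb{R}^n)$.

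\emph{Step 2 — non-degeneracy and conclusion.} I would then invoke the non-degeneracy estimate of Theorem~\ref{NãoDeg} at the critical point $x_*$; for an entire solution the scale restriction $r\in(0,r^*)$ is harmless (far from $x_0$ the weight $\mathfrak h_{r,x_0}$ has size $\mathrm{dist}(\cdot,x_0)^{\alpha}$, so the estimate propagates to every radius, with at least the universal constant $C_\star$, which is all that is needed). Thus for each $\rho>0$ there is $x_\rho\in\partial B_\rho(x_*)$ with $u(x_\rho)\ge C_\star\rho^{(4+\alpha)/(3-m)}$; since $|x_\rho|\le|x_*|+\rho$,
\[
\frac{u(x_\rho)}{|x_\rho|^{\frac{4+\alpha}{3-m}}}\ \ge\ C_\star\Bigl(\frac{\rho}{\,|x_*|+\rho\,}\Bigr)^{\frac{4+\alpha}{3-m}}\ \longrightarrow\ C_\star\qquad(\rho\to\infty),
\]
whence $\limsup_{|x|\to\infty}u(x)/|x|^{(4+\alpha)/(3-m)}\ge C_\star$, contradicting \eqref{Liouville II-H}. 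Therefore $u\equiv0$.

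\emph{Where the difficulty lies.} Step~2 and the elementary estimates are routine; the real work is in Step~1, the comparison on the unbounded domain. One must (i) confirm that the normalized $\Psi_{x_0,r}$ is a bona fide viscosity supersolution of $\Delta_\infty w=\mathfrak h_{r,x_0}(|x|)w^m$ — this is where the comparability of $\mathfrak h_{r,x_0}$ with $(|x-x_0|-r)_+^{\alpha}$, with the correct normalization, enters — the delicate locus being the dead‑core sphere $\partial B_r(x_0)$, where $\Psi_{x_0,r}$ is merely $C^1$: there $D\Psi_{x_0,r}=0$ forces the $\infty$‑Laplacian of any test function touching from below to vanish, which is precisely what makes the barrier admissible, and, for $\alpha<0$, one must additionally tame the singular weight; (ii) run the comparison principle on $B_S(x_0)$, leaning on the strict margin in \eqref{Liouville II-H} to push the barrier over $u$ on $\partial B_S(x_0)$ for all large $S$; and (iii) pass to the limit $S\to\infty$. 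A secondary verification is that Theorem~\ref{NãoDeg} indeed applies at $x_*$, a free boundary point of an entire solution of the translated, dead‑core‑type equation, and at the sharp growth rate.
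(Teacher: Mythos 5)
Your proposal is genuinely different from the paper's argument, and the difference matters. The paper's proof is a single comparison argument with a \emph{moving} plateau: for each large $\mathrm{R}$ it compares $u$ on $B_{\mathrm{R}}(x_0)$ with the explicit radial dead-core solution $v$ whose plateau has radius $\mathrm{R}-\bigl(\sup_{\partial B_{\mathrm{R}}}u/\tau(m,\alpha)\bigr)^{(3-m)/(4+\alpha)}$; the strict gap in \eqref{Liouville II-H} forces this plateau radius to grow to infinity with $\mathrm{R}$, so for every fixed $x$ the barrier $v$ eventually vanishes at $x$ and $u\equiv 0$ falls out directly. You instead compare with the \emph{fixed} barrier $\Psi_{x_0,r}$, which only yields $u\equiv0$ on $\overline{B_r(x_0)}$ plus the inequality $u\le\Psi_{x_0,r}$ (which on its own is compatible with $u\not\equiv0$), and you then outsource the rest to the non-degeneracy Theorem~\ref{NãoDeg}. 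This second step is where the genuine gap lies.

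First, the constant. Your contradiction hinges on obtaining $\sup_{\partial B_\rho(x_*)}u\ge C_\star\rho^{(4+\alpha)/(3-m)}$ with exactly $C_\star=\tau(m,\alpha)$, for unbounded $\rho$. But the non-degeneracy constant in Theorem~\ref{NãoDeg} is \emph{tailored} to the model weight $|x-x_*|^\alpha$ vanishing precisely at the critical point $x_*$: the proof pushes the radial profile $\tau(m,\alpha)|x-x_*|^{(4+\alpha)/(3-m)}$, which is calibrated so that $\Delta_\infty\Psi=|x-x_*|^\alpha\Psi^m$, from below. Here the weight is $\mathfrak h_{r,x_0}(|x|)\lesssim(|x-x_0|-r)_+^{\alpha}$. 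Near $x_*\in\partial B_r(x_0)$ the quantity $(|x-x_0|-r)_+$ is \emph{not} comparable to $|x-x_*|$: it vanishes on the entire sphere $\partial B_r(x_0)$ and is quadratically small in tangential directions. More importantly, the hypothesis of Theorem~\ref{Liouville II} gives only an \emph{upper} bound on $\mathfrak h_{r,x_0}$, and the non-degeneracy constant decreases when the weight decreases (a smaller absorption coefficient $\epsilon_0|x|^\alpha$, $\epsilon_0<1$, produces the smaller radial profile $\epsilon_0^{1/(3-m)}\tau(m,\alpha)|x|^{(4+\alpha)/(3-m)}$, and the comparison function in the non-degeneracy argument shrinks accordingly). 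So the claim that one gets "at least the universal constant $C_\star$" is unfounded; the best one can extract from Theorem~\ref{NãoDeg} in this setting is a constant that may be strictly below $C_\star$, in which case there is no contradiction with $\limsup<C_\star$. Second, even granting the constant, Theorem~\ref{NãoDeg} is a local statement (radii below $r^\ast$); extending it to $\rho\to\infty$ at the \emph{sharp} rate requires either a scaling argument or a re-run of its barrier construction adapted to $\mathfrak h_{r,x_0}$, neither of which is sketched. The paper avoids all of this by making the plateau itself diverge, which is the right mechanism for this Liouville theorem.
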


As a result of Theorems \ref{Hessian_continuity} and \ref{NãoDeg}, we obtain a positive density result for the non-coincidence set.

\begin{corollary}[{\bf Uniform Positive Density}]\label{positive density}
Let \( u \) be a weak solution of \eqref{pobst}, and let \( x_0 \in \{u > 0\} \cap B_{1/2} \). Then, there exists a universal constant \( \mathrm{c}_0 > 0 \) such that 
$$ 
\frac{\mathcal{L}^n(\{u > 0\} \cap B_r(x_0))}{\mathcal{L}^n( B_r(x_0))} \geq \mathrm{c}_0.
$$
\end{corollary}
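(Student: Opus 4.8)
The plan is to deduce positive density from a dichotomy that plays the upper growth bound of Theorem~\ref{Hessian_continuity} against the lower bound of Theorem~\ref{NãoDeg}. Fix $x_0 \in \{u>0\}\cap B_{1/2}$ and let $r>0$ be below a universal threshold, small enough that $r<r^{\ast}$ (from Theorem~\ref{NãoDeg}) and that all auxiliary balls produced below lie in the region where the two main theorems apply (after a harmless preliminary rescaling, this means $B_{3/4}$ in place of $B_1$). If $B_{r/4}(x_0)\subset\{u>0\}$, then trivially
$$
\mathcal{L}^n\big(\{u>0\}\cap B_r(x_0)\big)\ \geq\ \mathcal{L}^n\big(B_{r/4}(x_0)\big)\ =\ 4^{-n}\,\mathcal{L}^n\big(B_r(x_0)\big),
$$
and we are done. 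Otherwise, since $\{u>0\}$ is open and contains $x_0$, the connectedness of $B_{r/4}(x_0)$ forces a point $z_0\in\partial\{u>0\}\cap B_{r/4}(x_0)$.

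The next step is to observe that such a $z_0$ is a critical point, so that Theorem~\ref{NãoDeg} is available at $z_0$. Indeed $u(z_0)=0$ by continuity, while \eqref{Higher Reg} applied at $z_0$ gives $\sup_{B_\rho(z_0)}u\leq \mathrm{C}\,\rho^{\frac{4+\alpha}{3-m}}$ with $\frac{4+\alpha}{3-m}>1$ (this inequality is equivalent to $\alpha>-1-m$, which holds because $\alpha>-\tfrac43 m$ and $m<3$); since $u$ is locally Lipschitz by Corollary~\ref{gradiente control}, a vanishing of order strictly greater than one forces $|Du(z_0)|=0$, i.e.\ $z_0\in\mathcal{Z}^{\prime}_0(u,B_1)$. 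Applying Theorem~\ref{NãoDeg} at $z_0$ with radius $r/4$ yields $w_0\in\partial B_{r/4}(z_0)$ with
$$
u(w_0)\ \geq\ \left(\frac{(3-m)^4}{(4+\alpha)^3(1+\alpha+m)}\right)^{\frac{1}{3-m}}\!\!\left(\frac{r}{4}\right)^{\frac{4+\alpha}{3-m}}\ =:\ \mathrm{c}\,\left(\frac{r}{4}\right)^{\frac{4+\alpha}{3-m}},
$$
and $|w_0-x_0|\leq |w_0-z_0|+|z_0-x_0|<r/4+r/4=r/2$, so $w_0\in B_{r/2}(x_0)$.

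Finally I would use \eqref{Higher Reg} in the reverse direction to push the free boundary away from $w_0$. If some $\xi\in\partial\{u>0\}$ satisfied $|w_0-\xi|<\delta r$ (with $\delta\leq \tfrac14$ to be fixed), then \eqref{Higher Reg} at $\xi$ would give $u(w_0)\leq \mathrm{C}\,|w_0-\xi|^{\frac{4+\alpha}{3-m}}\leq \mathrm{C}\,(\delta r)^{\frac{4+\alpha}{3-m}}$, contradicting the lower bound above once $\delta<\tfrac14\,(\mathrm{c}/\mathrm{C})^{\frac{3-m}{4+\alpha}}$, which is a universal number in $(0,\tfrac14)$. Fixing such a $\delta$, the ball $B_{\delta r}(w_0)$ meets no point of $\partial\{u>0\}$; being connected and containing $w_0\in\{u>0\}$, it is contained in $\{u>0\}$, and since $|w_0-x_0|<r/2$ and $\delta r<r/2$ we have $B_{\delta r}(w_0)\subset B_r(x_0)$. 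Hence
$$
\mathcal{L}^n\big(\{u>0\}\cap B_r(x_0)\big)\ \geq\ \mathcal{L}^n\big(B_{\delta r}(w_0)\big)\ =\ \delta^{n}\,\mathcal{L}^n\big(B_r(x_0)\big),
$$
and the corollary follows with $\mathrm{c}_0:=\min\{4^{-n},\delta^{n}\}$. The one genuinely delicate point is the bookkeeping: making sure that $z_0$, $w_0$ and the competitor free boundary points $\xi$ all stay inside the set where Theorems~\ref{Hessian_continuity} and \ref{NãoDeg} may be invoked, and that the final clean ball $B_{\delta r}(w_0)$ sits inside $B_r(x_0)$; both are arranged by taking $r$ below a universal constant (and, if needed, rescaling the ambient estimates from $B_1$ to $B_{3/4}$). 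Everything else is a quantitative combination of the two main theorems.
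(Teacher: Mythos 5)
Your argument is correct and follows exactly the standard scheme---apply non-degeneracy (Theorem~\ref{NãoDeg}) at a nearby free-boundary point $z_0$ to produce $w_0$ with $u(w_0)\gtrsim r^{(4+\alpha)/(3-m)}$, then use the growth bound (Theorem~\ref{Hessian_continuity}) at any competitor $\xi\in\partial\{u>0\}$ to push the free boundary a proportional distance away from $w_0$---which is precisely what the paper delegates to \cite[Cor.~4.5]{daSRosSal19} and \cite[Cor.~5.3]{daSSal18}. One point worth spelling out: your check that $z_0\in\mathcal{Z}^{\prime}_0(u,\Omega)$ rests on the one-sided estimate $\sup_{B_\rho(z_0)}u\le \mathrm{C}\rho^{\beta}$ with $\beta>1$, which controls $u_+$ near $z_0$ but says nothing about $u_-$, so by itself it does not force $|Du(z_0)|=0$. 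The gap closes once one records that $u\ge 0$ in a neighborhood of $z_0$ (implicit in the free-boundary framework of \eqref{pobst}); then $z_0$ is a local minimum, the one-sided bound becomes $|u(x)-u(z_0)|\le \mathrm{C}|x-z_0|^{\beta}$, and this yields differentiability at $z_0$ with $Du(z_0)=0$, so Theorem~\ref{NãoDeg} is indeed applicable at $z_0$.
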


As another consequence of the growth rate and the non-degeneracy property, we obtain the porosity of the zero-level set.

\begin{definition}[{\bf Porous Set}]
    A set \( \mathcal{S} \subset \mathbb{R}^n \) is said to be \textit{porous} with porosity \( \delta_{\mathcal{S}} > 0 \) if there exists \( R > 0 \) such that
\[
\forall \, x \in \mathcal{S}, \; \forall \, r \in (0, R), \,\, \exists\,\, y \in \mathbb{R}^n \; \text{such that} \; B_{\delta_{\mathcal{S}} r}(y) \subset B_r(x) \setminus \mathcal{S}.
\]
\end{definition}

We observe that a porous set with porosity \( \delta_{\mathcal{S}} > 0 \) satisfies 
$$
\mathscr{H}_{\text{dim}}(\mathcal{S}) \leq n - \mathrm{c}_0 \delta_{\mathcal{S}}^n 
$$
where \( \mathrm{c}_0 = \mathrm{c}_0(n) > 0 \) is a dimensional constant. In particular, a porous set has Lebesgue measure zero (see, for example, \cite{Zaj87}).

\vspace{0.3cm}

\begin{corollary}[{\bf Porosity}]\label{Hausdorff dimension}
Let \( u \) be a non-negative weak solution of \eqref{pobst}. Then, the level set \( \partial \{ u > 0 \} \cap B_r \) is porous with a porosity constant independent of \( r > 0 \).
\end{corollary}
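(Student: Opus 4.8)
The plan is to pit the growth estimate of Theorem~\ref{Hessian_continuity} against the non-degeneracy of Theorem~\ref{NãoDeg}. Both control $u$ near a free boundary point in the very same gauge $r^{\frac{4+\alpha}{3-m}}$, and it is precisely this matching of exponents that will yield a porosity constant independent of the scale.

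First I would fix a free boundary point $x_0\in\partial\{u>0\}$ lying in a fixed interior ball and a radius $r$ small enough that $B_r(x_0)\subset B_1$ and $r<r^{\ast}$, with $r^{\ast}$ the threshold from Theorem~\ref{NãoDeg}. Since $u\ge 0$ is continuous, $u(x_0)=0$; and because $\sup_{B_\rho(x_0)}u\le \mathrm{C}\rho^{\frac{4+\alpha}{3-m}}$ with exponent $\frac{4+\alpha}{3-m}>1$ (the assumptions $\alpha>-\tfrac43 m$ and $m<3$ force $\alpha+m>-1$), $u$ is differentiable at $x_0$ with $Du(x_0)=0$. Hence $x_0\in\mathcal{Z}^{\prime}_0(u,B_1)$ and Theorem~\ref{NãoDeg} is available at $x_0$.

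Next I would select $y\in\overline{B_{r/2}(x_0)}$ realizing $\max_{\overline{B_{r/2}(x_0)}}u$ and invoke non-degeneracy at $x_0$, at scale $r/2$, to obtain
\[
u(y)\ \ge\ \sup_{\partial B_{r/2}(x_0)}u\ \ge\ \left(\frac{(3-m)^4}{(4+\alpha)^3(1+\alpha+m)}\right)^{\frac{1}{3-m}}\left(\frac{r}{2}\right)^{\frac{4+\alpha}{3-m}}\ =:\ \mathrm{c}_{\ast}\,r^{\frac{4+\alpha}{3-m}},
\]
so in particular $y\in\{u>0\}$. Writing $d:=\mathrm{dist}(y,\partial\{u>0\})$ and choosing $z\in\partial\{u>0\}$ with $|y-z|=d$ (still in the interior region where Theorem~\ref{Hessian_continuity} is licensed, by the smallness of $r$), the growth estimate gives $u(y)\le\sup_{B_d(z)}u\le \mathrm{C}\,d^{\frac{4+\alpha}{3-m}}$. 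Comparing the two bounds yields $d\ge 2\theta r$ with $\theta:=\tfrac12(\mathrm{c}_{\ast}/\mathrm{C})^{\frac{3-m}{4+\alpha}}$ universal. Finally, since $B_d(y)\subset\{u>0\}$ is disjoint from $\partial\{u>0\}$, and $|y-x_0|\le r/2$ gives $B_{r/2}(y)\subset B_r(x_0)$, taking $\delta_{\mathcal{S}}:=\min\{\theta,\tfrac12\}$ I would conclude
\[
B_{\delta_{\mathcal{S}} r}(y)\ \subset\ B_d(y)\cap B_{r/2}(y)\ \subset\ B_r(x_0)\setminus\partial\{u>0\},
\]
which is exactly the defining property of a porous set, with porosity constant $\delta_{\mathcal{S}}$ and $R$ the common smallness threshold above, neither depending on $r$; the quoted Hausdorff dimension bound then follows from \cite{Zaj87}.

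The point that requires care is not a deep obstacle but a matter of bookkeeping: Theorem~\ref{Hessian_continuity} is stated at points of $B_{1/2}\cap\partial\{u>0\}$ (with weight centered at the evaluation point) and Theorem~\ref{NãoDeg} needs $r<r^{\ast}$ with $B_r(x_0)\subset B_1$, so I would restrict $x_0$ to a fixed interior ball and $r$ below a fixed threshold (or rescale beforehand), and verify that the nearest free-boundary point $z$ to $y$ still lies in the region where the growth bound applies. Every constant produced this way depends only on $n$, $m$, and $\alpha$, as required.
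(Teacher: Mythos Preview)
Your argument is correct and is precisely the standard approach the paper invokes: the authors omit the proof and refer to \cite[Corollary~4.7]{daSRosSal19} and \cite[Corollary~5.5]{daSSal18}, whose arguments proceed exactly as you do---select a maximizer in $B_{r/2}(x_0)$, use non-degeneracy (Theorem~\ref{NãoDeg}) to bound $u(y)$ from below, then apply the growth estimate (Theorem~\ref{Hessian_continuity}) at the nearest free boundary point to force $\mathrm{dist}(y,\partial\{u>0\})\gtrsim r$. Your bookkeeping caveat about keeping $z$ inside the region where Theorem~\ref{Hessian_continuity} applies (and the weight-centering hypothesis) is appropriate and is handled, as you indicate, by restricting $x_0$ to a fixed interior ball and taking $r$ below a universal threshold.
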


It is important to note that the proofs of Corollaries \ref{positive density} and \ref{Hausdorff dimension} are analogous to those presented in \cite[Corollary 4.5 and Corollary 4.7]{daSRosSal19} and \cite[Corollary 5.3 and Corollary 5.5]{daSSal18}. As a result, we have chosen not to include the full demonstrations here to avoid unnecessary repetition of these arguments.

\subsection{The Borderline Scenario: A Strong Maximum Principle}

In this section, we will turn our attention to the critical equation obtained as \( m \to 3^{+} \), namely,
\begin{equation}\label{Crit_Eq}
\Delta_{\infty}\,u(x) = |x|^{\alpha}u^{3}(x) \quad \text{in} \quad \Omega.
\end{equation}

Observe that this equation is critical, as all the regularity estimates established so far degenerate when \( m \to 3 \). Moreover, we can rewrite equation \eqref{Crit_Eq} as
$$
\Delta_{\infty}\, u(x) = |x|^{\alpha}u^{\kappa} u^{3-\kappa}(x)
$$
for any \( \kappa>0 \). In particular, we obtain from Theorem \ref{Hessian_continuity} that if \( x_0 \in B_1 \) satisfies \( u(x_0)=0 \), then \( D^{|\tau|} u(x_0) =0 \) for all multi-indices \( \tau \in \mathbb{N}^n \). Thus, any vanishing point is a zero of infinite order. Finally, under the suitable (and strong) assumption that \( u \) is an analytic function, one could conclude that \( u \equiv 0 \).

In this context, utilizing the Maximum-Minimum Principle, we will demonstrate that a solution to \eqref{Crit_Eq} cannot vanish at interior points.

\begin{theorem}[{\bf Strong Maximum Principle}]
Let \( u \in C^{0}(\Omega) \) be a viscosity solution to \eqref{Crit_Eq}. Suppose that there exists an interior vanishing point \( x_0 \in \Omega \), i.e., \( u(x_0) =0 \). Then, \( u \equiv 0 \) in \( \Omega \).
\end{theorem}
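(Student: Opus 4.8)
The plan is to argue by contradiction, combining the scaling structure of equation \eqref{Crit_Eq} with the higher regularity estimate of Theorem \ref{Hessian_continuity} and a suitable comparison principle. First I would observe that, as noted just above the statement, the critical equation can be written as $\Delta_\infty u(x) = |x|^\alpha u^\kappa u^{3-\kappa}(x)$ for any $\kappa \in (0,3)$, so that the absorption term satisfies the bound \eqref{EqHomog-f} with exponent $m = 3-\kappa$. Since Theorem \ref{Hessian_continuity} (and the scaling/translation remarks following it) apply at any vanishing point, we obtain that at the interior zero $x_0$,
\[
\sup_{B_r(x_0)} u(x) \le \mathrm{C}(\kappa)\, r^{\frac{4+\alpha}{\kappa}}
\]
for all small $r$, where I should be careful that the constant $\mathrm{C}(\kappa)$ stays controlled as I let $\kappa \to 0^+$; letting $\kappa \to 0^+$ forces $u$ to vanish at $x_0$ to infinite order (i.e. $D^{|\tau|}u(x_0)=0$ for all multi-indices $\tau$, as already remarked). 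The delicate point is that infinite-order vanishing alone does not imply $u\equiv 0$ without analyticity, so the real work is to upgrade this into a genuine propagation statement.

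The mechanism I would use is a comparison with the explicit radial supersolutions from Example \ref{RadialEx}, or more precisely with their translated/truncated analogues, together with the strong minimum principle for $\infty$-superharmonic functions. Concretely, suppose the vanishing set $\{u=0\}$ is nonempty and not all of $\Omega$; pick $x_0 \in \partial\{u=0\}\cap\Omega$ so that there is a ball $B_\rho(y)\subset\{u>0\}$ with $x_0\in\partial B_\rho(y)$. On the annular region $B_{2\rho}(y)\setminus \overline{B_\rho(y)}$, I would build a subsolution of \eqref{Crit_Eq} of the form $\underline u(x) = \mathrm{c}_0\big(|x-y|-\rho\big)_+^{\frac{4+\alpha}{3-m}}$ for a small $m$ (using the computation in Example \ref{RadialEx}, adapted to show it is a subsolution of the critical equation by absorbing the lower-order power), chosen so that $\underline u \le u$ on $\partial B_\rho(y)$ trivially (both vanish) and $\underline u \le u$ on $\partial B_{2\rho}(y)$ by shrinking $\mathrm{c}_0$; comparison then yields $u \ge \underline u > 0$ at $x_0$, contradicting $u(x_0)=0$. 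Alternatively — and this is probably cleaner — I would invoke directly the fact that a nonnegative viscosity solution of $\Delta_\infty u = |x|^\alpha u^3 \ge 0$ is $\infty$-superharmonic (being the right-hand side nonnegative, $u$ is a viscosity supersolution of $\Delta_\infty u = 0$), so the classical strong minimum principle for $\infty$-superharmonic functions applies: if such a $u$ attains an interior minimum value, it is constant; since $u(x_0)=0=\min u$, we get $u\equiv 0$.

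I expect the main obstacle to be making the superharmonicity/strong-minimum-principle route fully rigorous in the viscosity framework: one must check that "$\Delta_\infty u = g$ with $g\ge 0$ in the viscosity sense" indeed implies "$u$ is a viscosity supersolution of $\Delta_\infty u = 0$" (this is immediate from the definitions, since touching from below by a smooth $\varphi$ gives $\Delta_\infty\varphi(x_0)\le g(x_0)$, which is not quite $\le 0$), so in fact a small argument is needed: either use $g(x_0) = |x_0|^\alpha u(x_0)^3 = 0$ precisely at candidate interior minima where $u=0$, or run the comparison-with-explicit-barrier argument which sidesteps the sign issue entirely. I would therefore structure the proof around the explicit barrier: (i) reduce to showing $\{u=0\}$ is open and closed in $\Omega$; (ii) closedness is obvious by continuity; (iii) for openness, at any $x_0$ with $u(x_0)=0$ that is not interior to $\{u=0\}$, extract a touching ball from the positivity side and derive a contradiction via comparison with the truncated radial solution of Example \ref{RadialEx}; (iv) conclude by connectedness of $\Omega$. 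The one routine check I would defer is verifying that the truncated radial function is a viscosity subsolution of the critical equation on the relevant annulus, which follows from the pointwise computation in Example \ref{RadialEx} together with $u^{3} \ge u^{m}\cdot u^{3-m}$ on the set where $u\ge 1$ after an appropriate rescaling of $u$.
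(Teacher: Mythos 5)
Your proposal takes a much longer and ultimately flawed route compared to the paper's proof, which is a short, direct application of the maximum--minimum principle. The paper simply observes that $u$ is $\infty$-\emph{sub}harmonic on $\mathcal{O}^+ = \{u>0\}$ (since there $\Delta_\infty u = |x|^\alpha u^3 \ge 0$) and $\infty$-\emph{super}harmonic on $\mathcal{O}^- = \{u<0\}$, and then invokes Lemma \ref{MP} on each set: $\sup_{\mathcal{O}^+} u = \sup_{\partial\mathcal{O}^+} u = 0$ forces $\mathcal{O}^+$ to be empty, and symmetrically for $\mathcal{O}^-$. No barriers, no infinite-order vanishing, no Theorem \ref{Hessian_continuity}.

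There are several genuine gaps in your argument. First, the superharmonicity route has the sign reversed: a nonnegative right-hand side gives $\Delta_\infty u \ge 0$, i.e.\ $u$ is a viscosity \emph{sub}solution of $\Delta_\infty u = 0$ (subharmonic), not a supersolution. Consequently the tool that applies is the \emph{maximum} principle, not the strong minimum principle you are reaching for, and your attempted fix (``use $g(x_0)=0$ at the candidate minimum'') does not make $u$ superharmonic in any neighborhood. Second, you restrict to $u\ge 0$; the theorem makes no such hypothesis, and the paper must (and does) treat $\{u<0\}$ separately. Third, your barrier argument does not close: you choose $x_0 \in \partial B_\rho(y)$ and compare with $\underline{u}(x) = \mathrm{c}_0\bigl(|x-y|-\rho\bigr)_+^{\beta}$, but $\underline{u}(x_0)=0$ since $|x_0-y|=\rho$, so the comparison only yields $u(x_0)\ge \underline{u}(x_0)=0$, which is not a contradiction with $u(x_0)=0$. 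Finally, the opening discussion of infinite-order vanishing via Theorem \ref{Hessian_continuity} with $m = 3-\kappa$ merely reproduces the remark already made in the paper before the theorem and, as you yourself note, cannot by itself conclude $u\equiv 0$ without analyticity; in the paper it plays no role in the actual proof.
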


\begin{proof}
The proof of this result follows from \cite[Remark 6.4]{BhatMoh12}. We will present it here to guide the reader. Consider \( \mathcal{O}^+ = \{x \in \Omega \,|\, u(x) > 0\} \). If \( \mathcal{O}^+ \) is nonempty, we obtain that 
$$
\Delta_\infty u \ge 0 \quad \text{in} \quad \mathcal{O}^+.
$$
By the Maximum Principle (Lemma \ref{MP}), and since there exists \( x_0 \in \Omega \) such that \( u(x_0) = 0 \), we conclude that
$$
u(x) \le \sup_{\mathcal{O}^+} u = \sup_{\partial \mathcal{O}^+} u = 0,
$$
i.e., \( u \equiv 0 \) in \( \mathcal{O}^+ \), which is a contradiction.

Similarly, if we consider \( \mathcal{O}^- = \{x \in \Omega \,|\, u(x) < 0\} \) and suppose that \( \mathcal{O}^- \) is nonempty, we have 
$$
\Delta_\infty u \le 0 \quad \text{in} \quad \mathcal{O}^-.
$$
By the Minimum Principle (Lemma \ref{MP}), and since there exists \( x_0 \in \Omega \) such that \( u(x_0) = 0 \), we conclude that
$$
0 = \inf_{\partial \mathcal{O}^-} u = \inf_{\mathcal{O}^-} u \le u(x).
$$
Hence, \( u \equiv 0 \) in \( \mathcal{O}^- \), which is also a contradiction. In conclusion, if there exists an interior vanishing point \( x_0 \in \Omega \), we obtain that \( u \equiv 0 \) in \( \Omega \).
\end{proof}

\subsection*{Acknowledgments}

J.V. da Silva has received partial support from CNPq-Brazil under Grant No. 307131/2022-0 and FAEPEX-UNICAMP 2441/23 Editais Especiais - PIND - Projetos Individuais (03/2023).  G.S. S\'{a} expresses gratitude for the PDJ-CNPq-Brazil (Postdoctoral Scholarship - 174130/2023-6).

\section{A relevant review of the literature}

\subsection{Hardy-H\'{e}non-type models}

In the sequel, we will discuss several semilinear elliptic models arising in astrophysics and various diffusion phenomena.

Historically, the H\'{e}non equations emerged in 1973 when the French mathematician and astronomer Michel H\'{e}non, in \cite{Henon73}, utilized the concentric shell model to numerically explore the stability of spherical stellar systems in equilibrium against spherical perturbations. He examined a type of semilinear problem. By way of explanation, it can be inferred that the existence of stationary stellar dynamics models, as seen in \cite{Batt77}, \cite{BFH86}, and \cite{LS95}, corresponds to the solvability of the semilinear equation
\[
-\Delta \mathrm{U}(x) = \mathrm{f}(|x|, \mathrm{U}(x)) \text{ in } \mathbb{R}^3,
\]
which, in the scenario where 
$$\mathrm{f}(|x|, \mathrm{U}(x)) = |x|^\alpha |\mathrm{U}(x)|^{p-2}\mathrm{U}(x),\quad  \alpha > 0,\,\,\, p > 2,
$$
transforms into the now well-known H\'{e}non equation,
\[
-\Delta \mathrm{U}(x) = |x|^\alpha |\mathrm{U}(x)|^{p-2}\mathrm{U}(x) \text{ in } \mathbb{R}^3,
\]
where the weight \( \mathfrak{h}(x) = |x|^\alpha \) represents a black hole situated at the center of the cluster, with its absorption intensity increasing as \( \alpha \) grows.

Currently, such elliptic models, notably the well-known H\'{e}non equations \cite{Henon73}, play a significant role in astrophysics and serve as standard models for several diffusion phenomena. From a mathematical perspective, the problem 
\begin{equation}\label{EqHenon}
\left\{
\begin{array}{rclcl}
-\Delta u(x) & = & |x|^{\alpha} |u|^{p-2} u & \text{in } & \Omega, \\
u(x) & = & 0 & \text{on } & \partial \Omega,
\end{array}
\right.    
\end{equation}
exhibits a fascinating structure, and numerous results have been established so far, some of which will be outlined in the sequel. Notably, in the context of \eqref{EqHenon}, Ni noted in \cite{Ni82} that the inclusion of the weight $\mathfrak{h}(x) = |x|^{\alpha}$ alters the outcomes of the Poho\v{z}aev identity, leading to a novel critical exponent, specifically \(p_{\alpha} = \frac{2(n + \alpha)}{n - 2}\), for the existence of classical solutions. Furthermore, symmetry breaking, asymptotics, and single-point concentration profiles at the boundary of the least-energy solutions, as \(\alpha \to \infty\), are discussed in \cite{BW06}, \cite{CPY09}, and \cite{SWS02}. Finally, in \cite{dosSP16}, the authors studied the concentration profiles of various types of symmetric positive solutions.

Recall that for \(p>1\), the elliptic model 
\begin{equation}\label{Eq-H-H}
\left\{
\begin{array}{rclcl}
-\Delta u(x) & = & |x|^{\alpha} u^p(x) & \text{in } & \Omega, \\
u(x) & > & 0 & \text{in } & \Omega, \\
u(x) & = & 0 & \text{on } & \partial \Omega,
\end{array}
\right.    
\end{equation}
is referred to as Hardy-type if \(\alpha < 0\) due to its connection to the Hardy-Sobolev inequality (see \cite{CotLa19}), and as H\'{e}non-type if \(\alpha > 0\).

Finally, using variational methods, one establishes the existence of a nontrivial solution to \eqref{Eq-H-H} in \(H_0^1(\Omega)\) (for \(\Omega \subset \mathbb{R}^n\) being an arbitrary bounded domain) provided that 
\[
2 < p+1 < \frac{2(n - |\alpha|)}{n-2},
\]
through an application of the seminal Caffarelli-Kohn-Nirenberg estimates (see \cite{CKN84}). Furthermore, by employing a generalized Poho\v{z}aev-type identity, one proves the non-existence of nontrivial solutions in star-shaped domains provided that 
\[
0 \geq \alpha > -n \quad \text{and} \quad p+1 = \frac{2(n - |\alpha|)}{n-2}.
\]

Therefore, the simplest model problem described by \eqref{EqHenon} (for non-negative viscosity solutions) serves as our initial impetus for studying Hardy-H\'{e}non-type equations driven by $\infty$-Laplacian with general weights, which may have a singular signature (see, Remark \ref{Remark1.1}).

\subsection{$\infty$-Laplacian and its Existence/Regularity Theories}

Historically, a comprehensive investigation of problems involving the \(\infty\)-Laplacian operator dates back to the groundbreaking contributions of Aronsson in \cite{Aronsson67} and \cite{Aronsson68}. Mathematically, the core focus of Aronsson's research was to address the following pivotal issue: given a bounded domain \(\Omega \subset \mathbb{R}^n\) and a Lipschitz function \(g: \partial \Omega \rightarrow \mathbb{R}\), find its optimal Lipschitz extension, denoted by \(\mathcal{G}\), such that it coincides with \(g\) on the boundary and satisfies: 
$$
\text{for any } \Omega' \subset \Omega, \text{ if } \mathcal{G} = h \text{ on } \partial \Omega', \text{ then } \|\mathcal{G}\|_{C^{0, 1}(\Omega')} \leq \|h\|_{C^{0, 1}(\Omega')}.
$$
Such a function \(\mathcal{G}\) is now referred to as an \textit{Absolutely Minimizing Lipschitz Extension} of \(g\) in \(\Omega\) (AMLE for short).

Afterward, Jensen demonstrated the following enlightening fact in \cite{Jensen93}:
$$
u \quad \text{is an AMLE} \qquad \Leftrightarrow \qquad -\Delta_{\infty} u(x) = 0 \quad \text{in the viscosity sense}.
$$
In other words, the \(\infty\)-Laplacian operator governs the Euler–Lagrange equation associated with this \(L^{\infty}\)-Lipschitz minimization problem. In particular, Jensen proved that a viscosity solution to the \(\infty\)-Laplacian subject to a given Dirichlet boundary condition is unique (cf. \cite{ArmSmart10} for an elementary proof of Jensen's theorem).

Moreover, infinity harmonic functions and their generalizations arise in several other contexts of pure and applied mathematics; see \cite{BEJ08} for an interesting survey. In particular, the value of a "Tug-of-War" game corresponds to an infinity harmonic profile; see e.g., \cite{PSSW09} for an enlightening work.

Nowadays, the regularity theory for the \(\infty\)-Laplacian operator is a prominent and challenging research theme in the modern context of nonlinear PDEs and related topics. A longstanding conjecture in this area asserts that viscosity solutions of 
$$ 
-\Delta_{\infty} u(x) = 0 \quad \text{in} \quad \Omega 
$$ 
belong to the class \(C^{1,\frac{1}{3}}\), which remains unverified despite some recent exciting advancements (cf. \cite{daSRosSal19} for a type of infinity-dead core problem and \cite{RTU15} for the infinity-obstacle problem). 

By way of illustration, Aronsson's family of explicit solutions (see \cite{Aronsson84}) is given by 
$$
u(x) = u(x_1, \cdots , x_n) = a_1|x_1|^{\frac{4}{3}} + \cdots + a_n|x_n|^{\frac{4}{3}} \quad (\text{for } \sum_{i=1}^{n} a_i^3 = 0)
$$
provides what we should expect, namely that the first derivatives of \(u\) are \(\alpha\)-H\"{o}lder continuous with a sharp exponent \(\alpha = \frac{1}{3}\). Moreover, their second derivatives fail to exist along the coordinate axes.

As an illustrative example, in the \(2\)-D scenario (and even in higher dimensions, see \cite{Yu06}), the result below allows us to conclude that there are solutions that do not have second continuous derivatives.

\begin{theorem}[{\bf Aronsson \cite{Aronsson68}}] 
Suppose that \(u \in C^2(\Omega)\), where \(\Omega \subset \mathbb{R}^n\) is a domain. If 
$$
-\Delta_{\infty} u(x) = 0 \quad \text{in} \quad \Omega,
$$
then \(|D u| \neq 0\) in \(\Omega\), except when \(u\) is a constant.

\end{theorem}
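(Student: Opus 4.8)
The statement is equivalent to saying that the critical set $Z:=\{x\in\Omega:\ Du(x)=0\}$ is either empty or all of $\Omega$. Since $Du$ is continuous, $Z$ is closed in $\Omega$, and $\Omega$ is connected, so it suffices to prove that $Z$ is \emph{open}. The whole argument would rest on the elementary identity, valid for any $C^{2}$ function,
\[
\langle Du,\ D(|Du|^{2})\rangle \;=\; 2\,\Delta_\infty u ,
\]
so that $\Delta_\infty u\equiv 0$ means precisely that $w:=|Du|^{2}$ is constant along the integral curves (streamlines) of the field $Du$. If $x(\cdot)$ solves $\dot x=Du(x)$, then $\tfrac{d}{dt}w(x(t))=\langle D(|Du|^{2}),Du\rangle(x(t))=2\Delta_\infty u(x(t))=0$ and $\tfrac{d}{dt}u(x(t))=\langle Du,\dot x\rangle=|Du(x(t))|^{2}$, which is the constant value of $w$ on that streamline; hence $u$ is affine in $t$ along each streamline, with slope equal to that constant.

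First I would record the consequences. A streamline passing through a point where $Du\neq0$ is never stationary and has constant speed $\sqrt{w}>0$; were it confined to a compact set $K\subset\subset\Omega$ for all positive times, the affine growth of $u$ along it would force $u$ to be unbounded on $K$, contradicting $u\in C^{0}(K)$. In particular, a streamline issuing from an interior maximum point of $|Du|$ on a ball $\overline{B_{r}(x_{0})}\subset\Omega$ carries the maximal value of $|Du|$ and, remaining in $\Omega$ until it leaves $\overline{B_{r}(x_{0})}$, must cross $\partial B_{r}(x_{0})$; this yields the maximum principle $\max_{\overline{B_{r}(x_{0})}}|Du|=\max_{\partial B_{r}(x_{0})}|Du|$.

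The last step --- promoting this to the openness of $Z$ --- is where the genuine difficulty lies, and I expect it to be the main obstacle. Fix $p\in Z$ and $\overline{B_{R}(p)}\subset\Omega$. Since $|Du(p)|=0$ and $r\mapsto\max_{\overline{B_{r}(p)}}|Du|$ is nondecreasing with limit $0$ as $r\to0^{+}$, one must rule out that it grows on $(0,R)$; the gradient maximum principle alone does not achieve this, and a matching lower control is needed. Here I would bring in the comparison-with-cones property of $\infty$-harmonic functions: the cone slopes $S^{\pm}(p,r)=\pm r^{-1}\bigl(\max/\min_{\partial B_{r}(p)}u-u(p)\bigr)$ are monotone in $r$ and (as $u\in C^{1}$) tend to $|Du(p)|=0$ as $r\to0^{+}$; coupling this monotonicity with the fact that $u$ is affine along the streamlines --- all of which, by the previous paragraph, must escape every compact subset of $\Omega$ --- one forces every streamline issuing arbitrarily close to $p$ to degenerate, i.e.\ $Du\equiv0$ near $p$. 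One may also exploit the $C^{2}$ hypothesis more directly: expanding $\Delta_\infty u=0$ to second order at a point of $Z$ gives $e^{\top}\bigl(D^{2}u(p)\bigr)^{3}e=0$ for every unit vector $e$, hence $\bigl(D^{2}u(p)\bigr)^{3}=0$ and, since $D^{2}u(p)$ is symmetric, $D^{2}u(p)=0$ --- a rigidity on $\partial Z\cap\Omega$ that combines with the streamline structure to rule out $p\in\partial Z\cap\Omega$. In either route the technical heart is Aronsson's delicate analysis of the streamline foliation, which is the part I would expect to require the most care.
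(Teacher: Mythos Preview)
The paper does not prove this statement; it is quoted as a classical result (Aronsson \cite{Aronsson68} for $n=2$, Yu \cite{Yu06} for general $n$) purely to motivate that $\infty$-harmonic functions need not be $C^{2}$. There is therefore no proof in the paper to compare yours against.

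On the merits of your outline: the preliminary steps are correct --- the identity $\langle Du,\,D|Du|^{2}\rangle=2\Delta_\infty u$, the conservation of $|Du|^{2}$ along streamlines, the affine growth of $u$ forcing nondegenerate streamlines to exit compacta, and the second-order expansion at $p\in Z$ giving $(D^{2}u(p))^{3}=0$ hence $D^{2}u(p)=0$, all hold. But you have yourself pinpointed the gap, and it is a genuine one: nothing written establishes that $Z$ is open. The Hessian vanishing on $Z$ is a pointwise rigidity, not an openness statement, and the cone-slope monotonicity runs in the wrong direction to help --- for $\infty$-subharmonic $u$ the map $r\mapsto S^{+}(p,r)$ is \emph{nondecreasing}, so knowing $S^{+}(p,0^{+})=0$ yields only $S^{+}(p,r)\ge 0$, which is vacuous. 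What you call ``Aronsson's delicate analysis of the streamline foliation'' is not a technicality to be filled in later; it is the entire content of the theorem, and everything preceding it is set-up. For a complete argument in arbitrary dimension under the $C^{2}$ hypothesis, consult Yu's short note \cite{Yu06}.
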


Regarding regularity estimates, the most precise results to date are attributed to Evans and Savin, who demonstrate in \cite{ES08} that infinity-harmonic functions in the plane belong to the class \(C^{1,\alpha_0}\), building on Savin’s breakthrough in \cite{Savin05} (note that the optimal \(\alpha_0 \in (0, 1)\) remains unknown even in \(2\)-D), and to Evans and Smart, who recently achieved differentiability everywhere, regardless of the dimension in \cite{ESmart11}.

The theory of inhomogeneous \(\infty\)-Laplacian equations 
$$
-\Delta_{\infty} u(x) = f(x) \quad \text{in} \quad \Omega
$$ 
is more recent and challenging. In this context, Lu and Wang in \cite{LuWang08} established the existence and uniqueness of continuous viscosity solutions to the Dirichlet problem:

\[
-\Delta_{\infty} u(x) = f(x) \quad \text{in } \Omega, \quad u = 0\quad \text{on } \partial \Omega,
\]
provided that the source function \(f\) does not change its sign (i.e., either \(\displaystyle \inf_{\Omega} f > 0\) or \(\displaystyle \sup_{\Omega} f < 0\)). Furthermore, uniqueness may fail if this condition is not satisfied (see, e.g., \cite[Appendix A]{LuWang08}). Despite the fact that Lipschitz estimates and differentiability also hold for a function whose \(\infty\)-Laplacian is bounded in the viscosity sense (see Lindgren's work \cite{Lind14}), no further regularity results are currently known for inhomogeneous equations.

\subsection{Elliptic diffusion models related to our research}

In the following, we will outline the connections and motivations between our problems \eqref{pobst} and several significant mathematical models arising in the context of free boundary problems and nonlinear diffusion processes.

As a primary example of a free boundary model related to our studies, we will mention the zero-obstacle problem addressed by Rossi \textit{et al.} in \cite{RTU15}, which involves examining a function that satisfies, in the viscosity sense,
\[
\Delta_{\infty} u = f(x) \quad \text{in } \quad  \{ u > 0 \} \quad \text{and} \quad u \geq 0 \quad \text{in } B_1.
\]
Alternatively, we can express the zero-obstacle problem as follows:
\begin{equation}\label{EqIOP}
\min \left\{ \Delta_{\infty} u(x) - f(x), u(x) \right\} = 0 \quad \text{in} \quad B_1    
\end{equation}
understood in the viscosity sense. In this context, the authors presented the following existence and regularity results.

\begin{theorem}[{\bf \cite[Theorem 3.1]{RTU15}}] Given a function $g \in C^0(\partial B_1)$, with $g > 0$, and $f$ satisfying
$$
0 < \mathfrak{m}_0 <f(x) \le \mathrm{M} < \infty,
$$
there exists a unique function $u \in C^0(\overline{B}_1)$ such that
$$
\left\{
\begin{array}{rclcl}
  \min \left\{ \Delta_{\infty} u(x) - f(x), u(x) \right\} & = & 0 & \text{in } & B_1,\\
   u(x)  & = & g(x) & \text{on } & \partial B_1
\end{array}
\right.
$$
in the viscosity sense. Moreover, if $f$ is uniformly Lipschitz continuous in $B_1$, then $u$ is locally Lipschitz continuous in $B_1$.
\end{theorem}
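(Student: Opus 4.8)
The plan is to reconstruct the argument of Rossi--Teixeira--Urbano in three independent blocks --- \emph{uniqueness} from a comparison principle, \emph{existence} from Perron's method with boundary barriers, and \emph{Lipschitz regularity} from a free-boundary growth estimate combined with interior gradient bounds for $\Delta_\infty\,\cdot = f$. \textbf{Uniqueness.} First I would record a comparison principle for the obstacle operator $F(x,r,p,X) := \min\{\langle Xp,p\rangle - f(x),\, r\}$, which is degenerate elliptic, proper, and monotone in the zeroth-order (obstacle) slot; the heart of the matter is the comparison principle for the \emph{inhomogeneous} equation $\Delta_\infty u = f$ when $f$ keeps a sign --- here $f \ge \mathfrak{m}_0 > 0$ --- which is Lu--Wang's theorem, recalling that comparison (hence uniqueness) genuinely fails when $f$ changes sign. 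Concretely, given a subsolution $u$ and a supersolution $v$ with $u \le v$ on $\partial B_1$, I would assume $\sup_{\overline B_1}(u-v) > 0$, double variables with the penalization $\tfrac{1}{2\varepsilon}|x-y|^{2}$, invoke the Crandall--Ishii lemma, and derive a contradiction at the maximizing pair $(x_\varepsilon,y_\varepsilon)$: where the obstacle is active one uses $u \le 0 \le v$ outright, and where it is inactive the strict positivity of $f$ supplies the needed strict inequality. This yields $u \le v$ throughout, and in particular uniqueness of the solution with datum $g$.

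\textbf{Existence.} With comparison available, the natural route is Perron's method: let $\mathcal{A}$ be the family of continuous viscosity supersolutions $w$ of the obstacle equation in $B_1$ with $w \ge g$ on $\partial B_1$ (nonempty, since a large constant works because $f$ and $g$ are bounded and $g$ is continuous with $g \ge c_0 > 0$), and set $u := \inf_{w \in \mathcal{A}} w$; the standard bump and sup-convolution constructions then show that $u$ is simultaneously a sub- and a supersolution. The only delicate point is continuous attainment of the boundary datum: since $g$ is strictly positive, near $\partial B_1$ every competitor lies strictly above the obstacle, so there the equation degenerates to $\Delta_\infty u = f$ and one may use explicit barriers --- for instance affine-plus-power profiles $x \mapsto a + b|x-y|^{4/3}$, which satisfy $\Delta_\infty(\cdot) = \mathrm{const}$ --- to trap $u$ between two functions taking the value $g(x_0)$ at a boundary point $x_0$; continuity of $u$ up to $\partial B_1$ with $u = g$ follows.

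\textbf{Lipschitz regularity.} Assuming now $f$ Lipschitz, on the open set $\{u>0\}$ the function $u$ solves $\Delta_\infty u = f$ with bounded (indeed Lipschitz) right-hand side, so the Lu--Wang (or Lindgren) interior gradient estimate gives $u \in C^{0,1}_{\mathrm{loc}}(\{u>0\})$, with $|Du(x)|$ controlled by $\operatorname{osc} u$ over a ball of radius comparable to $\operatorname{dist}(x,\partial\{u>0\})$ together with $\|f\|_\infty$. To keep this from degenerating at the free boundary, I would first establish the growth estimate $\sup_{B_r(x_0)} u \le \mathrm{C}\, r^{4/3}$ for $x_0 \in \partial\{u>0\}$ by comparing $u$, inside dyadic balls about $x_0$, with the explicit $4/3$-power profiles (the analogue of Theorem~\ref{Hessian_continuity} with $\alpha = m = 0$, using $\mathfrak{m}_0 \le f \le \mathrm{M}$). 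Then, for $x \in \{u>0\}$ with $d := \operatorname{dist}(x,\partial\{u>0\})$ small, I would rescale $w(y) := d^{-4/3}u(x+dy)$ on $B_1$; since $\Delta_\infty$ is invariant under this scaling, $w$ solves $\Delta_\infty w = f(x+d\,\cdot\,)$ with the same sup-bound, while $\|w\|_{L^\infty(B_1)} \le \mathrm{C}$ by the growth estimate, so the interior estimate forces $|Dw(0)| \le \mathrm{C}$, i.e.\ $|Du(x)| \le \mathrm{C}\, d^{1/3}$. Combined with the interior bound away from the free boundary and with $Du \equiv 0$ in the interior of $\{u=0\}$, this produces a uniform local Lipschitz bound (in fact with gradient vanishing along $\partial\{u>0\}$).

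I expect the main obstacle to be the comparison principle of the first block: $\Delta_\infty$ is strongly degenerate and not of divergence form, the doubling/test-function argument must be run with the non-smooth obstacle term in place, and --- crucially --- the statement is \emph{false} without the sign condition $\inf_{B_1} f > 0$, so the proof must exploit $f \ge \mathfrak{m}_0$ quantitatively. By contrast, once comparison and the $4/3$-growth are secured, the Perron construction, the boundary barriers, and the interior gradient estimates are routine.
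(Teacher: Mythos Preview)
The paper does not prove this theorem: it is quoted verbatim from \cite{RTU15} as part of the literature review in Section~2, with no accompanying argument. There is therefore no ``paper's own proof'' to compare your proposal against.

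That said, your reconstruction of the Rossi--Teixeira--Urbano argument is broadly on target. The three-block decomposition (comparison $\Rightarrow$ uniqueness, Perron $\Rightarrow$ existence, growth-plus-scaling $\Rightarrow$ Lipschitz) matches the architecture of \cite{RTU15}, and your identification of the sign condition $f\ge\mathfrak{m}_0>0$ as the crux of comparison is exactly right --- this is where Lu--Wang's result \cite{LuWang08} enters, and without it uniqueness genuinely fails. One small caution: in the existence step for the \emph{obstacle} problem the Perron family should be built from subsolutions (or one runs penalization $\Delta_\infty u_\varepsilon = f\cdot\beta_\varepsilon(u_\varepsilon)$ and passes to the limit), rather than from supersolutions as you wrote; and in the Lipschitz block the role of the Lipschitz hypothesis on $f$ is specifically to make the comparison-with-cones argument in \cite{RTU15} go through at the level of the rescaled equation, not merely to feed into Lindgren's estimate. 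But these are adjustments of detail, not of strategy.
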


The authors also present the optimal $C^{1, \frac{1}{3}}$ regularity estimate for solutions of the $\infty$-obstacle problem along the free boundary.

\begin{theorem}[{\bf \cite[Theorem 3.3]{RTU15}}] Let $u$ be a solution to \eqref{EqIOP} and let $x_0 \in \partial \{ u > 0 \}$ be an arbitrary free boundary point. Then,
\[
\sup_{B_r(x_0)} u(x) \leq \mathrm{C} \, r^{\frac{4}{3}}.
\]
for some constant $\mathrm{C}>0$ that depends only on the problem data.
\end{theorem}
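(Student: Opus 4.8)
\emph{Proof strategy.} The plan is a blow-up/compactness argument built on the scaling structure of \eqref{pobst}, which in the end reduces \eqref{Higher Reg} to the strong maximum/minimum principle for $\infty$-harmonic functions. Two preliminary observations do most of the work. First, writing $\gamma:=\frac{4+\alpha}{3-m}$, the equation is invariant under the rescaling $v(y):=\kappa^{-\gamma}u(x_0+\kappa y)$: from $\Delta_\infty v(y)=\kappa^{4-3\gamma}f(|x_0+\kappa y|,\kappa^{\gamma}v(y))$, the comparability $f\simeq|x-x_0|^{\alpha}t_+^{m}$, and the identity $\gamma(3-m)=4+\alpha$, one checks that $v$ solves an equation of exactly the same type with the same universal comparability constants. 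After translating $x_0$ to the origin and normalizing $\|u\|_{L^\infty(B_1)}\le 1$ (the general case follows by this scaling, the norm re-entering the constant), we may also assume $u\ge 0$ — the natural setting — together with $u(0)=0$ and $0\in\partial\{u>0\}$. Second, since the reaction term is non-negative, $\Delta_\infty u\ge 0$ in $B_1$, so $u$ is $\infty$-subharmonic; hence $\sup_{B_r}u=\sup_{\partial B_r}u$ and, by the increasing-slope estimate (comparison with cones, using $u(0)=0$, $u\ge 0$), the baseline bound $\sup_{B_r}u\le 2r\,\|u\|_{L^\infty(B_{1/2})}\le 2r$ for $r\in(0,\tfrac12]$.

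Put $S(u):=\sup_{0<r\le 1/2}r^{-\gamma}\sup_{B_r}u$; the assertion \eqref{Higher Reg} is exactly the claim that $S(u)\le\mathrm{C}$ for a universal $\mathrm{C}$. Assume not: there are solutions $u_j$ with normalized data, $0$ a free boundary point, and $S(u_j)\to\infty$. For each $j$ choose a radius $\rho_j\in(0,\tfrac12]$ that is almost a maximizer and, from there on, almost maximal:
$$
\rho_j^{-\gamma}\sup_{B_{\rho_j}}u_j\ \ge\ \tfrac12 S(u_j),\qquad\quad \sup_{B_r}u_j\ \le\ 2\bigl(\rho_j^{-\gamma}\sup_{B_{\rho_j}}u_j\bigr)\,r^{\gamma}\quad\text{for }r\in[\rho_j,\tfrac12].
$$
Since $\tfrac12 S(u_j)\rho_j^{\gamma}\le\sup_{B_{\rho_j}}u_j\le\|u_j\|_{L^\infty(B_1)}\le 1$, necessarily $\rho_j\to 0$. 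Consider the blow-ups $w_j(y):=\bigl(\sup_{B_{\rho_j}}u_j\bigr)^{-1}u_j(\rho_j y)$. Then $w_j\ge 0$, $w_j(0)=0$, $\sup_{B_1}w_j=1$; the baseline bound rescales to $\sup_{B_r}w_j\le r$ for $r\in(0,1]$, and the almost-maximality gives $\sup_{B_R}w_j\le 2R^{\gamma}$ for $1\le R\le(2\rho_j)^{-1}$, so $\{w_j\}$ is uniformly bounded on $B_2$ for $j$ large. Moreover, using $f_j\simeq|y|^{\alpha}t_+^{m}$ and once more $\gamma(3-m)=4+\alpha$,
$$
|\Delta_\infty w_j(y)|\ \le\ \mathrm{c}_n\,\frac{\rho_j^{4+\alpha}}{(\sup_{B_{\rho_j}}u_j)^{3-m}}\,|y|^{\alpha}(w_j)_+^{m}(y)\ \le\ \mathrm{c}_n\Bigl(\tfrac{2}{S(u_j)}\Bigr)^{3-m}|y|^{\alpha}(w_j)_+^{m}(y)\ =:\ \varepsilon_j\,|y|^{\alpha}(w_j)_+^{m}(y),
$$
with $\varepsilon_j\to 0$ (here $3-m>0$). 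This is where the admissibility range $\alpha>-\tfrac43 m$ — which in particular keeps $\gamma$ positive and makes the source degenerate under the blow-up — is being used.

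On every compact $K\Subset B_2\setminus\{0\}$ (indeed on $K\Subset B_2$ when $\alpha+m\ge 0$) we have $\|\Delta_\infty w_j\|_{L^\infty(K)}\to 0$ while $\{w_j\}$ stays bounded, so by the interior Lipschitz estimates for functions with bounded $\infty$-Laplacian (Lindgren; Lu--Wang) the $w_j$ are locally equi-Lipschitz in $B_2\setminus\{0\}$, and along a subsequence $w_j\to w_\infty$ locally uniformly there. By stability of viscosity solutions $w_\infty$ is $\infty$-harmonic in $B_2\setminus\{0\}$; it is non-negative, satisfies $w_\infty(y)\to 0$ as $y\to 0$ (because $0\le w_j(y)\le|y|$), and $\sup_{\partial B_1}w_\infty=1$ (passing to the limit in $\sup_{\partial B_1}w_j=\sup_{B_1}w_j=1$, the first equality by $\infty$-subharmonicity). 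Being bounded and $\infty$-harmonic in the punctured ball, $w_\infty$ extends to an $\infty$-harmonic function on $B_2$ (removability of the isolated singularity, a step only needed when $\alpha+m<0$); this extension is $\ge 0$ and vanishes at the interior point $0$, hence by the strong minimum principle for $\infty$-harmonic functions it is identically zero — contradicting $\sup_{\partial B_1}w_\infty=1$. Therefore $S(u_j)$ is bounded, and undoing the normalization gives \eqref{Higher Reg}.

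The step I expect to be the crux is controlling the blow-up profile \emph{at its vertex}: the rescaled equation only forces $w_\infty$ to be $\infty$-harmonic in the punctured ball, so one must argue separately that the singularity at the origin is removable and that $w_\infty(0)=0$; the latter is precisely what the $\infty$-subharmonicity of $u$ (a consequence of the sign of the reaction term) delivers through comparison with cones. A secondary, more bookkeeping-type point is selecting the radii $\rho_j$ so that $\varepsilon_j\to 0$ — this relies on $\rho_j$ being an almost-maximizer of $r^{-\gamma}\sup_{B_r}u_j$ together with the algebraic identity $\gamma(3-m)=4+\alpha$. Finally, the compactness step leans on the interior regularity theory for the inhomogeneous $\infty$-Laplacian, which is the main external input. (One should note that a direct barrier argument using the explicit radial profile $|x-x_0|^{\frac{4+\alpha}{3-m}}$ of Example \ref{RadialEx} does not by itself close the estimate: the largest multiple of that profile which is an admissible supersolution is too small to dominate $u$ on the boundary of a fixed ball, so the compactness input above cannot be dispensed with.)
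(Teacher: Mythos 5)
The theorem you were asked about is a cited background result from \cite{RTU15} concerning the $\infty$-obstacle problem \eqref{EqIOP}; the paper does not reprove it, but includes it as motivation. Your write-up is in fact aimed at the paper's own Theorem \ref{Hessian_continuity} (you invoke \eqref{pobst}, \eqref{EqHomog-f}, \eqref{Higher Reg} and work with $\gamma=\frac{4+\alpha}{3-m}$), so the fair comparison is against the proof in Section 4, of which the RTU15 statement is the special case $\alpha=m=0$.

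The two arguments share the same scaling identity $\gamma(3-m)=4+\alpha$ but diverge afterwards. The paper's proof is a dyadic iteration: assuming the geometric decay $\mathfrak{s}_{j+1}\le\max\{C2^{-\hat\beta(j+1)},2^{-\hat\beta}\mathfrak{s}_j\}$ fails along a sequence, it rescales to $v_k(x)=u(2^{-j_k}x)/\mathfrak{s}_{j_k+1}$, shows the rescaled source obeys $\|f_k\|_\infty=O(k^{-(3-m)})$, and closes the contradiction \emph{at finite $k$} with the inhomogeneous Harnack inequality (Theorem \ref{Harnack inequality}), since $v_k(0)=0$ and $\sup_{B_{1/2}}v_k=1$ force $1\le 9\cdot 0+C(\sup f_k)^{1/3}\to 0$. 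No limit profile is ever extracted. Your version instead selects an almost-maximizing radius $\rho_j$ for $r\mapsto r^{-\gamma}\sup_{B_r}u_j$, passes to a blow-up limit $w_\infty$ via equi-Lipschitz compactness, and kills it with the strong minimum principle (Lemma \ref{MP}). The bookkeeping in your rescaling is correct, including the use of $\gamma(3-m)=4+\alpha$ to produce $\varepsilon_j\to0$, and the closing observation about the barrier $C|x|^\gamma$ being a supersolution only for $C\le C_{\alpha,m}$ is accurate. What the paper's Harnack route buys — and this is the main structural gain — is that it never has to manage a singular limit at the free boundary point, whereas your route needs one extra ingredient that the paper never sets up.

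That ingredient is the removability of the isolated singularity of $w_\infty$ at the origin, and it is the genuine gap in your proposal. You flag it yourself as needed ``only when $\alpha+m<0$'', which is correct: if $\alpha+m\ge 0$ (so in particular for the cited RTU15 statement) the bound $0\le w_j(y)\le|y|$ yields $\|\Delta_\infty w_j\|_{L^\infty(B_1)}\le\varepsilon_j\to 0$ and stability (Lemma \ref{stability}) makes $w_\infty$ $\infty$-harmonic on the full ball with no puncture to worry about. But the range permitted by \eqref{EqHomog-f} does allow $\alpha+m<0$ (e.g.\ $m=2$, $\alpha=-2.5$), and then the source does not degenerate uniformly near $0$, so $w_\infty$ is only known to be $\infty$-harmonic in the punctured ball. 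The naive viscosity check for removability breaks at the supersolution side: a $C^2$ function touching $w_\infty$ from below at the origin is only constrained by $\varphi(y)\le|y|$, which gives $|D\varphi(0)|\le 1$ rather than $D\varphi(0)=0$, so $\Delta_\infty\varphi(0)\le 0$ does not follow for free. You would need either to cite a genuine removable-singularity theorem for bounded $\infty$-harmonic functions, or to replace the soft minimum-principle finish by the quantitative Harnack estimate applied \emph{before} passing to the limit — which is exactly the paper's strategy.
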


Additionally, they also prove that viscosity solutions leave the free boundary in a $C^{1, \frac{1}{3}}$-fashion.

\begin{theorem}[{\bf \cite[Theorem 3.5]{RTU15}}]
Let $u$ be a viscosity solution to \eqref{EqIOP} and let $y_0 \in \{ u > 0 \}$ be an arbitrary point in the closure of the non-coincidence set. Then,
\[
\sup_{B_r (y_0)} u \geq  \mathrm{c}(\mathfrak{m}_0) \, r^{\frac{4}{3}},
\]
for some universal constant $\mathrm{c} > 0$.
\end{theorem}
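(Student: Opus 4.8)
The plan is to run the classical obstacle‑type non‑degeneracy argument, replacing the one‑line subharmonicity remark (which is what makes it work for $-\Delta$) by the comparison principle for the $\infty$‑Laplacian with a source of one sign, due to Lu--Wang \cite{LuWang08}. First I would reduce to the case $y_0\in\{u>0\}$: if the estimate holds with the stated constant at every point of $\{u>0\}$, then for $y_0\in\overline{\{u>0\}}\setminus\{u>0\}$ one picks $y'\in\{u>0\}$ with $|y'-y_0|$ small, applies the estimate on $B_{r-|y'-y_0|}(y')\subseteq B_r(y_0)$, and lets $y'\to y_0$; continuity of $u$ yields $\sup_{B_r(y_0)}u\ge \mathrm{c}(\mathfrak{m}_0)\,r^{4/3}$. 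On the open set $\{u>0\}$ the equation $\min\{\Delta_\infty u-f,u\}=0$ reduces, as usual, to $\Delta_\infty u=f$ in the viscosity sense, and since $f\ge\mathfrak{m}_0>0$ there, $u$ is a viscosity subsolution of $\Delta_\infty w=\mathfrak{m}_0$ in $\{u>0\}$.

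Now fix $y_0\in\{u>0\}$ and $r>0$ with $\overline{B_r(y_0)}\subset B_1$, and set $D:=\{u>0\}\cap B_r(y_0)$, a nonempty bounded open set with $y_0\in D$. For $\varepsilon>0$ introduce the radial $\tfrac43$‑power barrier adapted to the source,
\[
\Phi_\varepsilon(x):=u(y_0)-\varepsilon+c\,|x-y_0|^{\frac{4}{3}},\qquad c:=\Big(\tfrac{81}{64}\,\mathfrak{m}_0\Big)^{\frac{1}{3}},
\]
so that $\Delta_\infty\big(c|x|^{4/3}\big)=\tfrac{64}{81}c^{3}=\mathfrak{m}_0$ by a direct computation, and $\Delta_\infty$ is unaffected by the additive constant $u(y_0)-\varepsilon$; moreover the corner of $\Phi_\varepsilon$ at $y_0$ points downward, so $\Phi_\varepsilon$ is a viscosity supersolution of $\Delta_\infty w=\mathfrak{m}_0$ on all of $D$. (Replacing $\mathfrak{m}_0$ here by $\mathfrak{m}_0-\delta$ makes $\Phi_\varepsilon$ a strict supersolution and is even safer; one then also sends $\delta\to0$ at the end.) Since $u$ is a subsolution and $\Phi_\varepsilon$ a supersolution of $\Delta_\infty w=\mathfrak{m}_0$ in $D$ — an equation with positive‑constant right‑hand side — the Lu--Wang comparison principle applies: $u\le\Phi_\varepsilon$ on $\partial D$ would force $u\le\Phi_\varepsilon$ in $D$. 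But $u(y_0)>u(y_0)-\varepsilon=\Phi_\varepsilon(y_0)$ and $y_0\in D$, so that conclusion is false; hence there is $z\in\partial D$ with $u(z)>\Phi_\varepsilon(z)$.

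It remains to localize $z$. Since $\partial D\subseteq\partial B_r(y_0)\cup\partial\{u>0\}$, either $z\in\partial B_r(y_0)$ or $z\in\partial\{u>0\}$; in the latter case $u(z)=0$ (continuity, and $u\ge0$), giving $0>u(y_0)-\varepsilon+c|z-y_0|^{4/3}\ge u(y_0)-\varepsilon$, which is impossible once $\varepsilon<u(y_0)$ (and $u(y_0)>0$ because $y_0\in\{u>0\}$). Hence, for every small $\varepsilon$, $z\in\partial B_r(y_0)$ and
\[
\sup_{B_r(y_0)}u\ =\ \sup_{\overline{B_r(y_0)}}u\ \ge\ u(z)\ >\ u(y_0)-\varepsilon+c\,r^{\frac43}\ \ge\ c\,r^{\frac43}-\varepsilon .
\]
Letting $\varepsilon\to0^{+}$ gives $\sup_{B_r(y_0)}u\ge\big(\tfrac{81}{64}\mathfrak{m}_0\big)^{1/3}r^{4/3}$, and the reduction from the first paragraph then delivers the theorem with $\mathrm{c}(\mathfrak{m}_0)=\big(\tfrac{81}{64}\mathfrak{m}_0\big)^{1/3}$.

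The main obstacle is precisely the step where, for $-\Delta$, one writes ``$u$ minus a quadratic is subharmonic'': the $\infty$‑Laplacian is degenerate and genuinely nonlinear, so $\Delta_\infty u\ge\Delta_\infty\Phi_\varepsilon$ does not make $u-\Phi_\varepsilon$ $\infty$‑subharmonic, and the whole argument rests on having a comparison principle for $\Delta_\infty w=g$ with $g$ of one sign. Invoking Lu--Wang correctly also takes some care with sign conventions — the equation $\Delta_\infty u=f$ with $f\ge\mathfrak{m}_0>0$ must be read so that $u$ is a subsolution and $\Phi_\varepsilon$ a supersolution of the \emph{same} constant‑coefficient equation $\Delta_\infty w=\mathfrak{m}_0$, equivalently one passes to $-u$, $-\Phi_\varepsilon$ and uses the form of the theorem valid for strictly positive sources. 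Everything else (the explicit value $\Delta_\infty(c|x|^{4/3})=\tfrac{64}{81}c^{3}$ fixing the constant, the dichotomy on $\partial D$, and the density/continuity reduction for boundary points) is routine once that tool is in hand.
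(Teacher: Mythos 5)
Your proof is correct and reproduces the standard barrier-plus-comparison argument from \cite{RTU15}, which is precisely the reference the paper cites for this theorem (the paper does not re-prove it); the same strategy underlies the paper's own non-degeneracy result, Theorem \ref{NãoDeg}, where an additional $\varepsilon$-penalization of the equation is needed because the Hardy--H\'{e}non source $|x|^{\alpha}u_{+}^{m}$ may vanish, whereas here $f\geq\mathfrak{m}_0>0$ lets you compare directly in $\{u>0\}\cap B_r(y_0)$. The normalization $\tfrac{64}{81}c^3=\mathfrak{m}_0$, the downward shift by $\varepsilon$ to force a strict contradiction at $y_0$, the dichotomy on $\partial\bigl(\{u>0\}\cap B_r(y_0)\bigr)$, and the density reduction to points of $\overline{\{u>0\}}$ are all handled correctly, and your remark that one should either read both functions against the same source $f\geq\mathfrak{m}_0$ or pass to a strict supersolution with $\mathfrak{m}_0-\delta$ (so that Lemma \ref{LemmaCP} applies with strict inequality) is exactly the right caveat.
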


The second relevant free boundary model that we must highlight concerns the $\infty$-dead core problem. Specifically, in \cite{ALT16}, Ara\'{u}jo \textit{et al.} focused on investigating reaction-diffusion models governed by the $\infty$-Laplacian operator. For $\lambda > 0$, $0 \leq \gamma < 3$, and $0 < \phi \in C^0(\partial \Omega)$, let $\Omega \subset \mathbb{R}^n$ be a bounded open domain, and define
\begin{equation}\label{EqDeadCore}
    \mathcal{L}^{\gamma}_{\infty} v := \Delta_{\infty} v - \lambda \cdot (v_+)^{\gamma} = 0 \quad \text{in} \quad \Omega \quad \text{and} \quad v = \phi \quad \text{on} \quad \partial \Omega.
\end{equation}
Here, the operator $\mathcal{L}^{\gamma}_{\infty}$ represents the $\infty$-diffusion operator with $\gamma$-strong absorption, and the constant $\lambda > 0$ is referred to as the \textit{Thiele modulus}, which controls the proportion between the reaction rate and the diffusion-convection rate (cf. \cite{Diaz85}, see also \cite{DT20} for related results).

We must also emphasize that a significant feature in the mathematical representation of equation \eqref{EqDeadCore} is the potential occurrence of plateaus, namely subregions, \textit{a priori} unknown, where the function becomes identically zero (see \cite{daSRosSal19}, \cite{daSSal18}, and \cite{Diaz85} for corresponding quasi-linear dead-core problems).

After proving the existence and uniqueness of viscosity solutions via Perron's method and a Comparison Principle (see \cite[Theorem 3.1]{ALT16}), the main result in the manuscript \cite{ALT16} guarantees that a viscosity solution is point-wisely of class \( C^{\frac{4}{3-\gamma}} \) along the free boundary of the non-coincidence set, i.e., \( \partial \{ u > 0 \} \) (see \cite[Theorem 4.2]{ALT16}). This implies that the solutions grow precisely as \( \mathrm{dist}^{\frac{4}{3-\gamma}} \) away from the free boundary. Additionally, by utilizing barrier functions, the authors demonstrate that such an estimate is sharp in the sense that \( u \) separates from its coincidence region precisely as \( \mathrm{dist}^{\frac{4}{3-\gamma}} \) (see, e.g., \cite[Theorem 6.1]{ALT16}).

Furthermore, the authors presented some Liouville-type results. Specifically, if \( u \) is an entire viscosity solution to
$$
\Delta_{\infty} u(x) = \lambda u^{\gamma}(x) \quad \text{in} \quad \mathbb{R}^n,
$$
with \( u(0) = 0 \), and \( u(x) = \text{o}\left(|x|^{\frac{4}{3-\gamma}}\right) \), then \( u \equiv 0 \) (see \cite[Theorem 4.4]{ALT16}).

Furthermore, they proved a more refined quantitative version of the previous result. Specifically, if \( u \) is an entire viscosity solution to
$$
\Delta_{\infty} u (x) = \lambda u^{\gamma}(x) \quad \text{in} \quad \mathbb{R}^n,
$$
such that
$$
\limsup_{|x| \to \infty} \frac{u(x)}{|x|^{\frac{4}{3-\gamma}}} < \left(\frac{\lambda(3-\gamma)^4}{4^3(1+\gamma)}\right)^{\frac{1}{3-\gamma}},
$$
then \( u \equiv 0 \) (see \cite[Theorem 5.1]{ALT16}).

We should also mention the work \cite{BisVo23}, where the authors establish Liouville-type results for a nonlinear equation involving the $\infty$-Laplacian with a gradient, expressed as
\[
\Delta^{\gamma}_{\infty} u + q(x) \cdot D u |D u|^{2-\gamma} + f(x,u) = 0 \quad \text{in} \quad \mathbb{R}^d,
\]
where \( \gamma \in [0,2] \) and \( \Delta^{\gamma}_{\infty} \) is a \((3-\gamma)\)-homogeneous operator related to the $\infty$-Laplacian. Specifically,
$$
\Delta^{\gamma}_{\infty} u := \frac{1}{|D u|^\gamma} \sum_{i,j=1}^{d} D_{i} u \, D_{i j} u \, D_{j} u.
$$
Moreover, note that \( \Delta^{\gamma}_{\infty} u \) corresponds to the classical $\infty$-Laplacian when \( \gamma = 0 \), while it becomes the normalized $\infty$-Laplacian for \( \gamma = 2 \) (cf. \cite{LY12} for related topics).

In this context, assuming that 
$$
\liminf_{|x| \to \infty} \lim_{s \to 0} \frac{f(x,s)}{s^{3-\gamma}} > 0
$$ 
and that \( q \) is a continuous function vanishing at infinity, the authors constructed a positive, bounded solution to the equation. Furthermore, if \( s \mapsto \frac{f(x,s)}{s^{3-\gamma}} \) is a decreasing function, they established uniqueness by refining the sliding method for the \( \infty \)-Laplacian operator with a nonlinear gradient.

Finally, as a matter of motivation, we should mention Teixeira's work in \cite{Tei22}, where the author studied regularity estimates at interior stationary points of solutions to \( p \)-degenerate elliptic equations in an inhomogeneous medium:
$$
\mathrm{div}\,\mathfrak{a}(x, \nabla u) = f(|x|, u) \lesssim \mathrm{c}_0|x|^{\alpha}|u|^{m}
$$
for some \( \mathrm{c}_0 > 0 \), \( \alpha \geq 0 \), and \( 0 \leq m < p - 1 \). In this scenario, the vector field \( \mathfrak{a}: B_1 \times \mathbb{R}^n \rightarrow \mathbb{R}^n \) is \( C^1 \)-regular in the gradient variable and satisfies the structural conditions: for every \( x,y \in B_1 \) and \( \xi, \eta \in \mathbb{R}^n \), the following holds:
\begin{equation} \label{condestr}
\left\{
\begin{array}{rclcl}
|\mathfrak{a}(x,\xi)| + |\partial_{\xi}\mathfrak{a}(x,\xi)||\xi| & \leq & \Lambda |\xi|^{p-1} & & \\
\lambda |\xi|^{p-2}|\eta|^2 & \leq & \langle \partial_{\xi}\mathfrak{a}(x,\xi)\eta,\eta \rangle & & \\
\displaystyle \sup_{x, y \in B_1 \atop{x \ne y, \,\,\,|\xi| \ne 0 }} \frac{|\mathfrak{a}(x,\xi)-\mathfrak{a}(y,\xi)|}{\omega(|x-y|)|\xi|^{p-1}} & \leq & \mathfrak{L}_0 < \infty, & &  \\
\end{array}
\right.
\end{equation}
where \( 2 < p < \infty \), \( 0 < \lambda \leq \Lambda < \infty \), \( \mathfrak{L}_0 > 0 \), and \( \omega: [0,\infty) \rightarrow [0,\infty) \) is a modulus of continuity for the coefficients of the vector field \( \mathfrak{a} \).

In this context, if the source term is bounded away from zero, Teixeira derives a quantitative non-degeneracy estimate, which indicates that solutions cannot be smoother than \( C^{p^{\prime}} \) at stationary points (see \cite[Proposition 5]{Tei22}). Furthermore, at critical points where the source vanishes, Teixeira proved higher-order regularity estimates that are sharp regarding the vanishing rate of the source term (see \cite[Theorem 3]{Tei22}).

As a final source of inspiration, we should also relate our problems to elliptic equations as follows:
\begin{equation}\label{EqMatukuma}
\mathrm{div}\left(\mathfrak{a}(|x|) |\nabla u|^{p-2} \nabla u\right) = \mathfrak{h}(|x|) f(u) \quad \text{in } \quad \Omega \subset \mathbb{R}^n, \quad  p > 1,
\end{equation}
where \(\mathfrak{a}, \mathfrak{h} : \mathbb{R}^+ \to \mathbb{R}^+\) represent radial profiles of class \(C^1(\mathbb{R}^+)\) and \(C^0(\mathbb{R}^+)\), respectively. As motivation, the celebrated Matukuma equation (or the Batt–Faltenbacher–Horst equation, see \cite{BFH86}) serves as a toy model for \eqref{EqMatukuma}. Additionally, strong absorption satisfies

\begin{itemize}
    \item[\textbf{(F1)}] \( f \in C^0(\mathbb{R}) \);
    \item[\textbf{(F2)}] \( f \) is non-decreasing on \( \mathbb{R} \), and \( f(t) > 0 \) if and only if \( t > 0 \).
\end{itemize}

Now, consider the model equation
\begin{equation}\label{ModelEq}
    \mathrm{div} \left( |x|^k | \nabla u|^{p-2} \nabla u \right) = |x|^{\alpha} f(u),
\end{equation}
where \( f(u) = u_{+}^{m} \) and \( m + 1 < k - \alpha < p \).

In this scenario, da Silva \textit{et al.} in \cite{daSdosPRS} addresses the following estimate for weak solutions to \eqref{ModelEq}:
$$
\sup_{x \in B_r(x_0)} u(x) \leq \mathrm{C} r^{1 + \frac{1 + \alpha + m - k}{p - 1 - m}},
$$
for a universal constant \( \mathrm{C} > 0 \), provided that \( x_0 \in B_1 \) is a free boundary point for \( u \), and \( f(|x|, t) \simeq |x - x_0|^{\alpha} t_+^m \), with \( \alpha + 1 + m > k \).

We summarize the sharp regularity estimates reported in the literature for several problems related to \eqref{pobst} in the table below.

\begin{table}[h]
\centering
\resizebox{\textwidth}{!}{
 \begin{tabular}{c|c|c|c}
{\bf Model problem} & {\bf Structural assumptions }  & {\bf Sharp regularity estimates} & \textbf{References} \\
\hline
 $\min \left\{ \Delta_{\infty} u(x) - f(x), u(x) \right\} = 0$ & $f \in L^{\infty}(B_1)$ \,\,\text{and}\,\,\,$u \geq 0$ & $C_
{\text{loc}}^{1, \frac{1}{3}}$ &  \cite{RTU15}  \\\hline
  $\Delta_{\infty} u(x) = \lambda u^{\gamma}(x) $ & $\lambda>0$ \,\,\text{and}\,\,$\gamma \in [0, 3)$ & $C_
{\text{loc}}^{\frac{4}{3-\gamma}}$ &  \cite{ALT16} and \cite{DT20} \\\hline
$\mathrm{div} \left( |x|^k | \nabla u|^{p-2} \nabla u \right) = |x|^{\alpha}u_+^m$ &  $0\leq m< p-1, \,\,\alpha + 1 + m > k \quad \text{and} \quad p \in (1, \infty)$ & $C_
{\text{loc}}^{\frac{p+\alpha-k}{p-1-m}}$ & \cite{daSdosPRS}\\
\hline
$\Delta_p u = f(x, u) \lesssim \mathrm{c}_0|x|^{\alpha}|u|^{m}$ & $\alpha \geq 0$ and $0\le m<p-1$ (for $p>2$) & $C_
{\text{loc}}^{1, \min\left\{\alpha_{\mathrm{H}}, \frac{p+\alpha+1}{p-1-m}\right\}^{-}}$ & \cite{Tei22} \\
\end{tabular}}
\end{table}

Particularly, we should stress that our work will improve some results previously presented, to some extent, via alternative strategies and techniques.

In conclusion, we emphasize that Teixeira's researches \cite{ALT16}, \cite{RTU15}, and \cite{Tei22} heavily inspire the current work, as well as we were motivated by the works of \cite{daSRosSal19}, \cite{daSSal18}, and \cite{daSdosPRS}. To the best of our knowledge, there is no investigation on this regularity topic for Hardy-H\'{e}non-type models driven by the $\infty$-Laplacian, and this lack of results motivated our investigation into such classes of elliptic problems.

\section{Preliminaries results}

In this section, we will discuss some fundamental tools in the theory of viscosity solutions to the inhomogeneous $\infty$-Laplacian. 

\begin{definition}[{\bf Viscosity solution}] 
We say that a function \( u \in C^0(\overline{\Omega}) \) is a sub-solution (resp. super-solution) of the PDE 
$$
-\Delta_{\infty} u(x) = f(|x|, u) \quad \text{in} \quad \Omega
$$ 
if, for every \( \varphi \in C^2(\overline{\Omega}) \) such that \( u - \varphi \) has a local maximum (resp. minimum) at some \( x_0 \in \Omega \), then
\[
-\Delta_{\infty} \varphi(x_0) \leq f(|x_0|, u(x_0)) \quad (\text{resp.} \,\,\,\geq f(|x_0|, u(x_0))).
\]
Finally, a function \( u \in C^0(\overline{\Omega}) \) is a viscosity solution of \( -\Delta_{\infty} u = f(|x|, u) \) if it is both a sub-solution and a super-solution.
\end{definition}

The first useful result is the maximum-minimum principle.

\begin{lemma}[{\cite[Lemma 2.1]{BhatMoh11}}]\label{MP} 
Let \( u \in C^0(\overline{\Omega}) \) satisfy \( -\Delta_{\infty} u \leq 0 \) (resp. \( -\Delta_{\infty} u \geq 0 \)) in the viscosity sense. Then,
\[
\sup_{\overline{\Omega}} u = \sup_{\partial \Omega} u \quad \left(\text{resp.} \quad \inf_{\overline{\Omega}} u = \inf_{\partial \Omega} u \right).
\]
Moreover, unless \( u \) is constant, the supremum (resp. infimum) occurs only on the boundary \( \partial \Omega \).
\end{lemma}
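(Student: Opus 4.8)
The plan is to prove the statement for subsolutions; the parenthetical version for supersolutions then follows at once by applying it to $-u$, since $\Delta_{\infty}(-u)=-\Delta_{\infty}u$ and hence $-\Delta_{\infty}(-u)\geq 0$ in the viscosity sense. So I assume $u\in C^{0}(\overline{\Omega})$ with $-\Delta_{\infty}u\leq 0$ in the (bounded) domain $\Omega$, and note that adding a constant to $u$ preserves this property. The single computation I would record at the outset is that, for a fixed $z\in\mathbb{R}^{n}$ and $\kappa>0$, the function $g_{z}(x):=-\kappa|x-z|^{4/3}$ is $C^{\infty}$ on $\mathbb{R}^{n}\setminus\{z\}$ and, by the radial identity $\Delta_{\infty}(\phi(|x-z|))=(\phi')^{2}\phi''$, satisfies $\Delta_{\infty}g_{z}(x)=-\tfrac{64}{81}\kappa^{3}<0$ for all $x\neq z$; more generally, $w(x):=a+b|x-p|^{\gamma}$ with $b>0$ and $\gamma\in(0,1)$ obeys $\Delta_{\infty}w(x)=b^{3}\gamma^{3}(\gamma-1)|x-p|^{3\gamma-4}<0$ away from $p$. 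These smooth, strictly $\infty$-superharmonic profiles are what the whole argument leans on: because $\Delta_{\infty}$ degenerates wherever a test function has vanishing gradient, $u$ cannot be perturbed \emph{additively} into a strict subsolution, so one must instead compare $u$ against smooth strict supersolutions.

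For the weak maximum principle I would argue by contradiction. Suppose $\delta_{\ast}:=\max_{\overline{\Omega}}u-\max_{\partial\Omega}u>0$. Fix $z\notin\overline{\Omega}$, put $L:=\max_{\overline{\Omega}}|x-z|$ and $\ell:=\min_{\overline{\Omega}}|x-z|>0$, and choose $\kappa>0$ so small that $\kappa(L^{4/3}-\ell^{4/3})<\delta_{\ast}$; let $\varphi:=g_{z}=-\kappa|x-z|^{4/3}\in C^{\infty}(\overline{\Omega})$. On $\partial\Omega$ one has $u-\varphi\leq\max_{\partial\Omega}u+\kappa L^{4/3}$, whereas at a point realizing $\max_{\overline{\Omega}}u$ one has $u-\varphi\geq\max_{\overline{\Omega}}u+\kappa\ell^{4/3}=\max_{\partial\Omega}u+\delta_{\ast}+\kappa\ell^{4/3}$, which by the choice of $\kappa$ strictly exceeds $\max_{\partial\Omega}(u-\varphi)$. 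Hence $u-\varphi$ attains its maximum over the compact set $\overline{\Omega}$ at an interior point $y\in\Omega$. Testing the subsolution $u$ with $\varphi$ at $y$ gives $-\Delta_{\infty}\varphi(y)\leq 0$, i.e.\ $\Delta_{\infty}\varphi(y)\geq 0$, contradicting $\Delta_{\infty}\varphi(y)=-\tfrac{64}{81}\kappa^{3}<0$. Therefore $\max_{\overline{\Omega}}u=\max_{\partial\Omega}u$.

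For the strong statement, assume $\Omega$ is connected (otherwise argue on each component) and set $M:=\max_{\overline{\Omega}}u$ and $A:=\{x\in\Omega:\ u(x)=M\}$. The plan is to show $A$ is open; since it is relatively closed by continuity, this forces $A\in\{\emptyset,\Omega\}$, and $A=\Omega$ means $u\equiv M$, i.e.\ the maximum is attained in the interior only when $u$ is constant. If $A$ failed to be open, I would pick $x_{0}$ in the relative boundary of $A$ in $\Omega$; then there is $p$ arbitrarily close to $x_{0}$ with $u(p)<M$, hence by continuity a radius $\sigma\in(0,|x_{0}-p|)$ with $u\leq M-\tfrac12(M-u(p))$ on $\overline{B_{\sigma}(p)}$, together with an outer radius $R$ satisfying $|x_{0}-p|<R$ and $B_{R}(p)\subset\subset\Omega$. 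On the annulus $N:=B_{R}(p)\setminus\overline{B_{\sigma}(p)}$ I would use the radial barrier $w(x)=a+b|x-p|^{\gamma}$ with $\gamma\in(0,1)$ and $a,b$ (with $b>0$) fixed by $w\equiv M$ on $\partial B_{R}(p)$ and $w\equiv M-\tfrac12(M-u(p))$ on $\partial B_{\sigma}(p)$. Then $w\in C^{\infty}(\overline{N})$, $\Delta_{\infty}w<0$ in $N$, and $u\leq w$ on $\partial N$; running the interior-maximum argument of the previous paragraph on $N$ (a positive interior maximum of $u-w$ would force $\Delta_{\infty}w\geq 0$ there, impossible) yields $u\leq w$ in $N$. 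Evaluating at $x_{0}\in N$ gives $u(x_{0})\leq w(x_{0})=a+b|x_{0}-p|^{\gamma}<a+bR^{\gamma}=M$, contradicting $u(x_{0})=M$.

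The routine parts are the two elementary comparisons of boundary versus interior maxima and the solution of the linear system fixing $a,b$. The one genuinely delicate point is, for the strong principle, manufacturing a smooth strictly $\infty$-superharmonic function that equals $M$ on the outer sphere yet lies strictly below $M$ at the prescribed point $x_{0}$ — this is exactly why one works in an annulus centered at a point $p\neq x_{0}$ and keeps the singular vertex of the $\gamma$-homogeneous ($\gamma<1$) profile out of $\overline{N}$. Beyond that, the entire proof runs on the single mechanism that at an interior maximum of $u-\varphi$ with $\varphi\in C^{2}$ near that point one necessarily has $\Delta_{\infty}\varphi\geq 0$.
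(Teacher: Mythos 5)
The lemma is cited in the paper from Bhattacharya--Mohammed \cite{BhatMoh11} with the added remark that it can also be obtained as a consequence of the Harnack inequality; the paper itself supplies no proof. Your argument is correct and is a self-contained, elementary alternative. For the weak principle you compare against the smooth strict supersolution $\varphi(x)=-\kappa|x-z|^{4/3}$ with $z\notin\overline\Omega$, verify $\Delta_\infty\varphi\equiv -\tfrac{64}{81}\kappa^3<0$ on $\overline\Omega$, and arrange $\kappa$ small so that $u-\varphi$ is forced to have an interior maximum; the viscosity subsolution inequality then gives $\Delta_\infty\varphi\ge0$ there, a contradiction. For the strong principle you run a connectedness argument on $A=\{u=M\}$ using annular barriers $w=a+b|x-p|^\gamma$, $\gamma\in(0,1)$, $b>0$, with $\Delta_\infty w=b^3\gamma^3(\gamma-1)|x-p|^{3\gamma-4}<0$, chosen to dominate $u$ on $\partial\bigl(B_R(p)\setminus\overline{B_\sigma(p)}\bigr)$ and strictly below $M$ inside, yielding the required contradiction at the boundary point $x_0$ of $A$; the choice of $p$ close to $x_0$ so that $|x_0-p|<\operatorname{dist}(p,\partial\Omega)$ is legitimate and the linear system fixing $a,b$ indeed gives $b>0$. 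Two minor points worth flagging for a clean write-up: state explicitly that $\Omega$ is bounded (so $\partial\Omega\ne\emptyset$ and $\ell=\min_{\overline\Omega}|x-z|>0$), and note that the test function in the annulus need only be $C^2$ near the touching point, so extending or truncating $w$ away from $N$ is harmless. Compared with the route the paper alludes to (Harnack $\Rightarrow$ strong maximum principle), your barrier proof is more elementary and does not rely on any deeper estimate; Harnack gives the strong principle essentially for free once it is available, but requires more machinery to establish in the first place.
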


We must emphasize that the maximum principle stated above can be derived from Harnack’s inequality (see \textit{e.g.} \cite{ACJ04}).

To study the Dirichlet problem 
\begin{equation}\label{DirichetProbzero}
\left\{
\begin{array}{rclcc}
-\Delta_{\infty} u(x) & = & f(x, u) & \text{in} & \Omega \\
u(x) & = & g(x) & \text{on} & \partial \Omega 
\end{array}
\right.
\end{equation}
we will consider a continuous, one-sign function \( f: \Omega \times \mathbb{R} \to \mathbb{R} \), and we will assume one of the following conditions on the forcing term:
\begin{equation}\label{CC_forcing_term}
\sup_{x \in \overline{\Omega}} f(x, t) < \infty \quad \text{for each } t \in \mathbb{R}, \quad \left(
\inf_{x \in \overline{\Omega}} f(x, t) > -\infty \quad \text{for each } t \in \mathbb{R}\right).
\end{equation}

Finally, we will always assume that the boundary data \( g \), in problem \eqref{DirichetProbzero}, belongs to \( C^0(\partial \Omega) \).

Next, we will introduce the following classes of functions:
\[
\mathcal{N}_{+} := \left\{ w_1 \in C^0(\overline{\Omega}) \mid -\Delta_{\infty} w_1(x) \leq f(x, w_1) \text{ in } \Omega, \text{ and } w_1 \leq g \text{ on } \partial \Omega \right\},
\]
and
\[
\mathcal{N}_{-} := \left\{ w_2 \in C^0(\overline{\Omega}) \mid -\Delta_{\infty} w_2(x) \geq f(x, w_2) \text{ in } \Omega, \text{ and } w_2 \geq g \text{ on } \partial \Omega \right\}.
\]

We would like to emphasize that, according to \cite[Lemma 3.1]{BhatMoh11}, both \(\mathcal{N}_{\pm}\) are non-empty, provided that \(f\) satisfies the assumption in \eqref{CC_forcing_term}.

Now, we define the functions
\begin{equation}\label{Perron_sol}
u(x) := \sup_{w_1 \in \mathcal{N}_{+}} w_1(x) \quad \text{and} \quad v(x) := \inf_{w_2 \in \mathcal{N}_{-}} w_2(x) \quad (x \in \Omega).
\end{equation}

We note that the following result establishes the Lipschitz continuity of the functions defined in \eqref{Perron_sol}.

\begin{lemma}[{\cite[Lemma 3.4]{BhatMoh11}}]\label{Lipschitz continuous}
If \(f\) is non-negative, then the function \(u\) defined in \eqref{Perron_sol} is locally Lipschitz continuous on \(\Omega\). Similarly, if \(f\) is non-positive, then the function \(v\) defined in \eqref{Perron_sol} is locally Lipschitz continuous on \(\Omega\).
\end{lemma}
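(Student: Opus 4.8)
The plan is to prove the statement in two steps. First, I would identify $u$ from \eqref{Perron_sol} with a continuous viscosity solution of $-\Delta_\infty u=f(\cdot,u)$ in $\Omega$ by Perron's method. Second, I would use the sign of $f$ to recognize that such a solution is infinity-superharmonic, and then invoke the classical local Lipschitz regularity of infinity-sub/superharmonic functions. The assertion for $v$ is symmetric: when $f\le 0$, the infimum $v$ is a continuous viscosity solution which is infinity-\emph{sub}harmonic, and the same regularity applies. So I will only describe the case of $u$.

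\textbf{Step 1 (Perron's method).} Since $\mathcal N_{+}\ne\varnothing$ by \cite[Lemma~3.1]{BhatMoh11}, and since, via Lemma~\ref{MP} together with explicit barriers built from $g$ and from the radial profiles $c\,|x|^{4/3}$ (whose infinity-Laplacian is the constant $\tfrac{64}{81}c^{3}$, cf.\ Example~\ref{RadialEx} with $\alpha=m=0$), every competitor is trapped between a fixed sub- and a fixed super-solution, the function $u$ is finite and locally bounded. One then checks in the standard fashion that the upper semicontinuous envelope $u^{*}$ is a viscosity subsolution of $-\Delta_\infty u=f(\cdot,u)$ (stability of subsolutions under suprema), while the lower semicontinuous envelope $u_{*}$ is a viscosity supersolution via the usual bump construction: were $u_{*}$ not a supersolution at some point, adding a small radial perturbation (again modeled on $c\,|x-x_{0}|^{4/3}$) to a nearly extremal $w_{1}\in\mathcal N_{+}$ would produce an element of $\mathcal N_{+}$ lying strictly above $u$ nearby, contradicting \eqref{Perron_sol}. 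The comparison principle for $-\Delta_\infty u=f(\cdot,u)$ valid under the one-sidedness and local boundedness hypothesis \eqref{CC_forcing_term} (as in \cite{BhatMoh11}; the sign restriction on $f$ is precisely what rules out the failure of uniqueness recalled in the introduction, cf.\ \cite{LuWang08}) then gives $u^{*}\le u_{*}$, whence $u^{*}=u_{*}=u\in C^{0}(\Omega)$ and $u$ solves the equation.

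\textbf{Step 2 (Lipschitz regularity).} Because $f\ge 0$, the solution $u$ is in particular a viscosity supersolution of $-\Delta_\infty u=0$: whenever $\varphi\in C^{2}$ touches $u$ from below at $x_{0}$, one has $-\Delta_\infty\varphi(x_{0})\ge f(|x_{0}|,u(x_{0}))\ge 0$. Thus $u$ is infinity-superharmonic, i.e.\ $-u$ is infinity-subharmonic, so it enjoys comparison with cones; by the Crandall--Evans--Gariepy theory of absolutely minimizing functions (see, e.g., \cite{ACJ04}), infinity-subharmonic functions are locally Lipschitz, with $\operatorname{Lip}(u;B_{r}(x_{0}))\lesssim r^{-1}\operatorname{osc}_{B_{2r}(x_{0})}u$ on balls $B_{2r}(x_{0})\Subset\Omega$. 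Hence $u$ is locally Lipschitz in $\Omega$. If one prefers a self-contained quantitative estimate, the same conclusion follows by comparing $u$, on small balls centered at $y$, with the radial barriers $u(y)+a\,|x-y|-b\,|x-y|^{4/3}$ where $b\simeq\|f(\cdot,u)\|_{L^{\infty}}^{1/3}$ and $a\simeq r^{-1}\operatorname{osc}u+(\|f(\cdot,u)\|_{L^{\infty}}\,r)^{1/3}$: using the radial identity $\Delta_\infty g=(g')^{2}g''$ one checks that these are local supersolutions of $-\Delta_\infty\,\cdot\,\ge\|f(\cdot,u)\|_{L^{\infty}}$ dominating $u$, which yields the one-sided Lipschitz bound $u(x)-u(y)\le a\,|x-y|$ and, upon varying $y$, the full estimate.

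The step I expect to be the genuine obstacle is Step 1: the infinity-Laplacian is highly degenerate, and for $u$-dependent sources the existence of a \emph{continuous} solution — i.e.\ the comparison principle powering Perron's construction, together with the boundary barriers if continuity up to $\partial\Omega$ is also wanted — depends delicately on the sign condition \eqref{CC_forcing_term}. Once Step 1 is granted, Step 2 is essentially immediate: the sign of $f$ forces $u$ (resp.\ $v$) to be infinity-super- (resp.\ sub-) harmonic, and local Lipschitz continuity is then a classical fact.
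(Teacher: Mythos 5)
The paper itself does not prove this lemma; it is imported verbatim as \cite[Lemma~3.4]{BhatMoh11}, so I can only assess your argument on its own terms, and against the logical structure that statement has to fit into.

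Your Step 2 is the right and clean observation: once one knows that $u$ is a viscosity \emph{supersolution} of $-\Delta_\infty u = f(\cdot,u)$, the nonnegativity of $f$ makes $u$ a viscosity supersolution of $-\Delta_\infty u = 0$ (whenever $\varphi$ touches $u$ from below, $-\Delta_\infty\varphi(x_0)\ge f(\cdot)\ge 0$), so $u$ enjoys comparison with cones from below and is therefore locally Lipschitz, with the oscillation-over-radius control you write down. The mirrored statement for $v$ and $f\le 0$ is likewise fine.

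The gap is in Step 1, and it is not merely a technicality you are deferring. First, the chain ``usc envelope subsolution, lsc envelope supersolution, comparison gives $u^{*}\le u_{*}$'' relies on a comparison principle for two functions with the \emph{same} right-hand side $f(\cdot,u)$, and for the inhomogeneous $\infty$-Laplacian this is emphatically \emph{not} guaranteed by one-sidedness and boundedness alone. The paper's own Lemma~\ref{LemmaCP} requires strict inequality $f_1>f_2$ plus monotonicity of one of the $f_i$ in $t$, and Corollary~\ref{Uniqueness} requires $f$ strictly one-signed \emph{and} nondecreasing in $t$. The Lu--Wang result you invoke needs $\inf f>0$, not $f\ge 0$. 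Lemma~\ref{Lipschitz continuous} has none of these extra hypotheses, so your route proves at best a weaker statement. Second, and structurally, in \cite{BhatMoh11} the Lipschitz lemma \emph{precedes} the existence theorem (Theorem~\ref{Existence_Solution}) and is a step toward it: the monotone-in-$t$ and existence machinery is introduced afterwards precisely because proving that the Perron envelope $u$ is a genuine \emph{solution} is the hard part. Deriving the lemma by first proving the theorem therefore inverts the intended dependency.

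A non-circular proof should obtain a cone-type (or barrier) estimate for the Perron envelope \emph{directly from the variational characterization} $u=\sup_{w\in\mathcal N_{+}}w$, without ever asserting that $u$ solves the equation. The ingredient you already sketch at the end of Step 2 — comparing against explicit radial barriers of the form $u(y)+a\,|x-y|-b\,|x-y|^{4/3}$ with $b$ controlled by $\|f\|_{L^\infty}^{1/3}$ — is the right tool; what you need to make precise is that each competitor $w\in\mathcal N_{+}$ can be tested against such a barrier via the maximum principle (Lemma~\ref{MP}) on small balls, uniformly over the family, so the one-sided Lipschitz estimate passes to the supremum. That argument uses only the subsolution inequality for members of $\mathcal N_{+}$, the sign of $f$, and the radial identity $\Delta_\infty g=(g')^{2}g''$, and does not invoke any comparison/uniqueness result for the full Dirichlet problem.
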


Now, we are in a position to present an existing result.

\begin{theorem}[{\cite[Theorem 3.5]{BhatMoh11}}]\label{Existence_Solution}
Suppose that \(f: \Omega \times  \mathbb{R} \to \mathbb{R}\) is continuous and non-decreasing in the second variable.
\begin{enumerate}
    \item If \(f\) is non-negative, then \(u\) given in \eqref{Perron_sol} is a solution of the Dirichlet problem \eqref{DirichetProbzero}.
    \item If \(f\) is non-positive, then \(v\) given in \eqref{Perron_sol} is a solution of the Dirichlet problem \eqref{DirichetProbzero}.
\end{enumerate}    
\end{theorem}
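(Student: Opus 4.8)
The plan is to run the classical Perron method and show that the envelope $u$ from \eqref{Perron_sol} solves \eqref{DirichetProbzero}; I treat case (1), the non-negative case, since (2) follows by the change $u\mapsto -u$, under which $\Delta_\infty$ becomes $-\Delta_\infty$, $f(x,t)$ becomes $-f(x,-t)$ (still continuous and non-decreasing in $t$, now non-positive), and the sup-envelope of the transformed problem is exactly minus the inf-envelope $v$ of the original. \textbf{Step 1 (finiteness and continuity of $u$).} By \cite[Lemma 3.1]{BhatMoh11} the classes $\mathcal{N}_{\pm}$ are non-empty. Fixing $w_2\in\mathcal{N}_-$ and using the comparison principle for the one-signed, $u$-monotone inhomogeneous $\infty$-Laplacian (\cite{LuWang08} and \cite{BhatMoh11}), every $w_1\in\mathcal{N}_+$ satisfies $w_1\le w_2$ in $\overline{\Omega}$; hence $u$ is real-valued with $\sup_{\Omega}u\le\sup_{\Omega}w_2<\infty$, and $u\ge w_1$ for any fixed $w_1\in\mathcal{N}_+$. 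By Lemma \ref{Lipschitz continuous}, $u$ is locally Lipschitz, so $u\in C^0(\Omega)$.

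\textbf{Step 2 ($u$ is a subsolution).} I use that a pointwise supremum of subsolutions is a subsolution. Let $\varphi\in C^2$ with $u-\varphi$ having a strict local maximum equal to $0$ at $x_0$ on $\overline{B_r(x_0)}\subset\Omega$, and pick $w_j\in\mathcal{N}_+$ with $w_j(x_0)\to u(x_0)$. Since $w_j\le u$ and $\max_{\partial B_r(x_0)}(u-\varphi)<0$, for large $j$ the function $w_j-\varphi$ attains its maximum over $\overline{B_r(x_0)}$ at an interior point $\tilde x_j$, and continuity of $u$ forces $\tilde x_j\to x_0$ and $w_j(\tilde x_j)\to u(x_0)$. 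From $-\Delta_\infty\varphi(\tilde x_j)\le f(\tilde x_j,w_j(\tilde x_j))$, letting $j\to\infty$ with $D^2\varphi$ and $f$ continuous gives $-\Delta_\infty\varphi(x_0)\le f(x_0,u(x_0))$. I also record the elementary stability fact, needed below: the maximum of two subsolutions is a subsolution, since a test function touching $\max\{u_1,u_2\}$ from above at a point touches whichever $u_i$ realizes the maximum there (no monotonicity of $f$ is used here).

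\textbf{Step 3 ($u$ is a supersolution).} Suppose not: there exist $\varphi\in C^2$ and $x_0\in\Omega$ with $u-\varphi$ having a strict local minimum equal to $0$ at $x_0$ and $-\Delta_\infty\varphi(x_0)<f(x_0,u(x_0))=f(x_0,\varphi(x_0))$. By continuity of $x\mapsto-\Delta_\infty\varphi(x)$ and of $x\mapsto f(x,\varphi(x))$, this strict inequality persists on some $B_{\rho}(x_0)\subset\subset\Omega$, and since $f(x,\cdot)$ is non-decreasing it persists with $\varphi$ replaced by $\varphi+\delta$ for small $\delta>0$; thus $\varphi+\delta$ is a subsolution in $B_{\rho}(x_0)$. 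Choosing $\rho'<\rho$ and $0<\delta<\min_{\partial B_{\rho'}(x_0)}(u-\varphi)$, the function equal to $\max\{u,\varphi+\delta\}$ on $\overline{B_{\rho'}(x_0)}$ and to $u$ elsewhere is continuous, belongs to $\mathcal{N}_+$ by the stability fact and $B_{\rho'}(x_0)\subset\subset\Omega$, yet takes the value $u(x_0)+\delta>u(x_0)$ at $x_0$, contradicting maximality. Hence $u$ is a supersolution, so by Step 2 a viscosity solution in $\Omega$.

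\textbf{Step 4 (boundary data) and the main obstacle.} Finally one checks $u\in C^0(\overline{\Omega})$ with $u=g$ on $\partial\Omega$: at each $x_0\in\partial\Omega$ one constructs upper and lower barriers by adding to $g(x_0)$ suitable cone- or power-type $\infty$-comparison functions, large enough to absorb the bounded source near $x_0$, and combines continuity of $g$ with comparison against the competitors defining $u$ to get $\limsup_{x\to x_0}u(x)\le g(x_0)\le\liminf_{x\to x_0}u(x)$; this is the boundary analysis of \cite{BhatMoh11} and needs no regularity of $\partial\Omega$. I expect the genuinely hard input to be not any individual Perron step but the underlying \emph{comparison principle} for $-\Delta_\infty u=f(x,u)$ under the one-sign and monotonicity hypotheses — it is what makes $u$ finite in Step 1 and identifies the solution — since the $\infty$-Laplacian is highly degenerate and non-variational; accordingly I would import that principle from \cite{LuWang08} and \cite{BhatMoh11}, after which the remaining steps are routine.
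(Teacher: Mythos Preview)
The paper does not actually prove this statement: it is quoted as a preliminary result from \cite[Theorem 3.5]{BhatMoh11} and no argument is given in the manuscript, so there is no in-paper proof to compare your attempt against. Your Perron-method outline---finiteness of the envelope via comparison with a fixed element of $\mathcal{N}_-$, stability of subsolutions under pointwise supremum, the bump-lemma contradiction for the supersolution property using that $f(x,\cdot)$ is non-decreasing, and cone/power barriers for the boundary condition---is precisely the standard route and matches what Bhattacharya--Mohammed do in the cited reference. One small point worth tightening in Step~1: the comparison result you invoke from \cite{LuWang08} requires a \emph{strictly} signed right-hand side, whereas here $f\ge 0$ may vanish; the correct input is the ordering lemma internal to \cite{BhatMoh11} (their Lemmas 3.1--3.2), which handles the merely non-negative case directly and is what makes $u$ finite without appealing to a full comparison principle.
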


The following lemma presents a Comparison Principle result for the Dirichlet problem \eqref{DirichetProbzero}.

\begin{lemma}[{\cite[Lemma 4.1]{BhatMoh11}}]\label{LemmaCP}
    Suppose \(f_i: \Omega \times \mathbb{R} \to \mathbb{R}\) for \(i = 1, 2\) are continuous. Let \(u, v \in C(\overline{\Omega})\) satisfy 
    $$
    -\Delta_{\infty} u(x) = f_1(x, u) \quad \text{and} \quad -\Delta_{\infty} v(x) = f_2(x, v).
    $$
    Furthermore, assume that either \(f_1(x, t)\) or \(f_2(x, t)\) is non-decreasing in \(t\), and that \(f_1(x, t) > f_2(x, t)\) for all \((x, t) \in \Omega \times \mathbb{R}\). If \(u \leq v\) on \(\partial \Omega\), then \(u \leq v\) in \(\Omega\).
\end{lemma}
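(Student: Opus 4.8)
The plan is to argue by contradiction through the doubling–of–variables device together with the Crandall–Ishii–Lions theorem on sums, specialized to the degenerate operator $F(p,X)=-\langle X p,p\rangle$; this is exactly the scheme behind \cite[Lemma 4.1]{BhatMoh11}, and I would reproduce it in broad strokes. Suppose the conclusion fails, so that $M:=\max_{\overline{\Omega}}(u-v)>0$. Since $u\le v$ on $\partial\Omega$, this maximum is attained only at interior points, hence $K:=\{x\in\overline{\Omega}:u(x)-v(x)=M\}$ is a nonempty compact subset of $\Omega$. By continuity of $f_1$ and $f_2$, the strict inequality $f_1>f_2$, and compactness, one fixes an open neighborhood $V$ of $K$ with $\overline{V}\subset\Omega$ and a constant $\sigma>0$ such that $f_1(x,t)-f_2(x,t)\ge\sigma$ for every $x\in\overline{V}$ and every $t$ in the compact range of $u$ and $v$ over $\overline{V}$.

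Next, for $\varepsilon>0$ I would consider
$$
w_\varepsilon(x,y):=u(x)-v(y)-\frac{1}{2\varepsilon}|x-y|^2,\qquad (x,y)\in\overline{V}\times\overline{V},
$$
and let $(x_\varepsilon,y_\varepsilon)$ be a maximum point. By the standard penalization lemma, $\tfrac{1}{\varepsilon}|x_\varepsilon-y_\varepsilon|^2\to 0$, $w_\varepsilon(x_\varepsilon,y_\varepsilon)\to M$, and, along a subsequence, $x_\varepsilon,y_\varepsilon\to\hat{x}$ for some $\hat{x}\in K\subset V$; in particular $u(x_\varepsilon)-v(y_\varepsilon)\ge M$ and $(x_\varepsilon,y_\varepsilon)$ is interior for all small $\varepsilon$. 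Applying the theorem on sums at the interior maximum $(x_\varepsilon,y_\varepsilon)$ of $w_\varepsilon$, and writing $p_\varepsilon:=\tfrac{1}{\varepsilon}(x_\varepsilon-y_\varepsilon)$, produces symmetric matrices $X_\varepsilon,Y_\varepsilon$ with $(p_\varepsilon,X_\varepsilon)\in\overline{J}^{2,+}u(x_\varepsilon)$, $(p_\varepsilon,Y_\varepsilon)\in\overline{J}^{2,-}v(y_\varepsilon)$, and
$$
\begin{pmatrix} X_\varepsilon & 0 \\ 0 & -Y_\varepsilon \end{pmatrix} \le \frac{3}{\varepsilon} \begin{pmatrix} I & -I \\ -I & I \end{pmatrix},
$$
which forces $X_\varepsilon\le Y_\varepsilon$, hence $\langle X_\varepsilon p_\varepsilon,p_\varepsilon\rangle\le\langle Y_\varepsilon p_\varepsilon,p_\varepsilon\rangle$.

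Feeding these jets into the viscosity inequalities — $u$ being a subsolution of $-\Delta_{\infty}u=f_1(x,u)$ and $v$ a supersolution of $-\Delta_{\infty}v=f_2(x,v)$ — yields
$$
-\langle X_\varepsilon p_\varepsilon,p_\varepsilon\rangle\le f_1(x_\varepsilon,u(x_\varepsilon)),\qquad -\langle Y_\varepsilon p_\varepsilon,p_\varepsilon\rangle\ge f_2(y_\varepsilon,v(y_\varepsilon)),
$$
so that $\langle X_\varepsilon p_\varepsilon,p_\varepsilon\rangle\le\langle Y_\varepsilon p_\varepsilon,p_\varepsilon\rangle$ gives $f_1(x_\varepsilon,u(x_\varepsilon))\ge f_2(y_\varepsilon,v(y_\varepsilon))$ for all small $\varepsilon$. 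Letting $\varepsilon\to 0$, using $x_\varepsilon,y_\varepsilon\to\hat{x}$, the continuity of $u,v,f_1,f_2$, the relation $u(\hat{x})=v(\hat{x})+M>v(\hat{x})$, and the monotonicity in $t$ of whichever of $f_1,f_2$ is non-decreasing, one transfers the arguments in the limiting inequality to a common point and a common value; the structural hypotheses — the strict ordering encoded in the gap $\sigma>0$ together with the monotonicity — then contradict this, completing the argument. Therefore $u\le v$ in $\Omega$.

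The genuinely delicate point — and essentially the only non-routine ingredient — is the degeneracy of the $\infty$-Laplacian where the gradient vanishes: if $x_\varepsilon=y_\varepsilon$ (so $p_\varepsilon=0$) the second-order information disappears, the viscosity inequalities degenerate to $0\le f_1$ and $0\ge f_2$, and they no longer interact with the ellipticity relation $X_\varepsilon\le Y_\varepsilon$. This is precisely where comparison for the inhomogeneous $\infty$-Laplacian is subtle (cf. \cite{LuWang08}); the standard remedies are to replace the quadratic penalization by a higher-order one such as $\tfrac{1}{4\varepsilon}|x-y|^4$ — whose Hessian vanishes together with its gradient, so degenerate gradients no longer destroy the argument — or to use the structure of the problem to locate the doubling maxima where the gradients stay bounded away from zero. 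Everything else is bookkeeping within the Crandall–Ishii–Lions framework.
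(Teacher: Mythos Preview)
Your doubling-of-variables outline is the right template, and the chain you write down is correct for the equation with $-\Delta_\infty$: from $X_\varepsilon\le Y_\varepsilon$ and the two viscosity inequalities you obtain
$$
f_1(x_\varepsilon,u(x_\varepsilon))\ \ge\ -\langle X_\varepsilon p_\varepsilon,p_\varepsilon\rangle\ \ge\ -\langle Y_\varepsilon p_\varepsilon,p_\varepsilon\rangle\ \ge\ f_2(y_\varepsilon,v(y_\varepsilon)).
$$
The gap is in the final step, where you assert that ``the strict ordering together with the monotonicity then contradict this''. They do not: the limiting inequality $f_1(\hat x,u(\hat x))\ge f_2(\hat x,v(\hat x))$ is \emph{compatible} with the hypothesis $f_1>f_2$, and no use of the monotonicity in $t$ reverses it. For instance, if $f_2$ is non-decreasing and $u(\hat x)>v(\hat x)$, then $f_2(\hat x,v(\hat x))\le f_2(\hat x,u(\hat x))<f_1(\hat x,u(\hat x))$, which only confirms what you derived; the case $f_1$ non-decreasing is analogous. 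A one-dimensional check exposes the issue: with $f_1\equiv 1$ and $f_2\equiv 0$, the solution of $-u''(u')^2=1$ on $(-1,1)$ with zero boundary data is strictly positive inside, while $v\equiv 0$ solves $-\Delta_\infty v=0$; here $f_1>f_2$, $u=v$ on $\partial\Omega$, yet $u>v$ in $\Omega$.

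What is really going on is a sign inconsistency in the statement as recorded here. The paper does not prove this lemma --- it is quoted from \cite{BhatMoh11} --- and its own application (see the proof of Theorem~\ref{NãoDeg}, where $f_1(x,t)=|x|^\alpha t_+^m+\varepsilon>f_2(x,t)=|x|^\alpha t_+^m$ are attached to the equations $\Delta_\infty u_\varepsilon=f_1$ and $\Delta_\infty\Psi=f_2$, concluding $u_\varepsilon\le\Psi$) uses the comparison for $\Delta_\infty$, not $-\Delta_\infty$. With that sign your chain flips to $f_1(x_\varepsilon,u(x_\varepsilon))\le f_2(y_\varepsilon,v(y_\varepsilon))$; then, using the monotonicity to bring both sides to the common value $u(\hat x)$ (or $v(\hat x)$), you genuinely contradict $f_1>f_2$. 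So your scheme is sound once the sign is fixed; as written, the ``contradiction'' you invoke simply is not one.
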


In the sequel, Lemma \ref{LemmaCP} allows us to obtain the following uniqueness result.

\begin{corollary}[{\cite[Corollary 4.6]{BhatMoh11}}]\label{Uniqueness}
 Let \(f: \Omega \times \mathbb{R} \to \mathbb{R}\) be continuous and either positive or negative. If \(f\) is also non-decreasing in the second variable, then the Dirichlet problem \eqref{DirichetProbzero} has at most one solution.
\end{corollary}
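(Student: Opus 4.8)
The plan is to derive the uniqueness from the Comparison Principle, Lemma \ref{LemmaCP}. The single obstruction is that this lemma requires a \emph{strict} inequality between the forcing terms, while here we compare two solutions of the \emph{same} equation; the argument is therefore a perturbation-and-limit scheme that fabricates the missing strictness. First I would reduce to the case \(f>0\): since \(\Delta_\infty\) is odd, \(u\mapsto-u\) turns a solution of \eqref{DirichetProbzero} into a solution of the analogous problem with datum \(-g\) and forcing \(\tilde f(x,t):=-f(x,-t)\), which is continuous, negative, still non-decreasing in \(t\) and still obeys \eqref{CC_forcing_term}; hence the negative case reduces to the positive one. So assume \(u_1,u_2\in C^0(\overline\Omega)\) are two solutions of \eqref{DirichetProbzero} with \(f>0\); by the symmetry of the hypotheses it suffices to prove \(u_1\le u_2\).

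For \(\epsilon\in(0,1)\), put \(f_\epsilon:=(1+\epsilon)f\) and \(f^\epsilon:=(1-\epsilon)f\): both are continuous, positive, non-decreasing in \(t\), and satisfy \eqref{CC_forcing_term}, so Theorem \ref{Existence_Solution} provides Perron solutions \(\overline{u}_\epsilon,\underline{u}_\epsilon\) of \eqref{DirichetProbzero} with the same datum \(g\) and forcings \(f_\epsilon,f^\epsilon\) respectively. Since \(f^\epsilon<f<f_\epsilon\) \emph{strictly} on \(\Omega\times\mathbb{R}\) (this is exactly where \(f>0\) enters), \(f\) is non-decreasing in \(t\), and \(\overline{u}_\epsilon=u_i=\underline{u}_\epsilon=g\) on \(\partial\Omega\), Lemma \ref{LemmaCP} applies three times and yields
\[
\overline{u}_\epsilon\ \le\ u_i\ \le\ \underline{u}_\epsilon\qquad\text{in }\Omega,\quad i=1,2 ,
\]
while comparing \(\overline{u}_\epsilon\) with \(\overline{u}_{\epsilon'}\) (and \(\underline{u}_\epsilon\) with \(\underline{u}_{\epsilon'}\)) for \(0<\epsilon'<\epsilon\) shows the families are monotone, \(\overline{u}_\epsilon\uparrow\) and \(\underline{u}_\epsilon\downarrow\) as \(\epsilon\downarrow0\). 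An equivalent route, giving the same sandwich, is to perturb a solution directly: using that \(\Delta_\infty\) is \(3\)-homogeneous and commutes with additive constants, \((1+\epsilon)^{1/3}u_2\), suitably translated, becomes a \emph{strict} super-solution of the \(f\)-problem, after which the Comparison Principle gives \(u_1\le (1+\epsilon)^{1/3}u_2+\mathrm{o}(1)\).

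What remains — and is, I expect, the only genuinely delicate point — is to show that the sandwich closes, i.e.\ \(\underline{u}_\epsilon-\overline{u}_\epsilon\to0\) as \(\epsilon\downarrow0\); combined with the displayed inequality this forces \(u_1\equiv u_2\). The family \(\{\overline{u}_\epsilon,\underline{u}_\epsilon\}_{0<\epsilon<1/2}\) lies between \(\overline{u}_{1/2}\) and \(\underline{u}_{1/2}\), hence is uniformly bounded, and by Lemma \ref{Lipschitz continuous} it is locally equi-Lipschitz; so Arzelà–Ascoli together with the stability of viscosity solutions under locally uniform limits gives \(\overline{u}_\epsilon\to\overline{u}_0\) and \(\underline{u}_\epsilon\to\underline{u}_0\), both solutions of \eqref{DirichetProbzero}. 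To identify \(\overline{u}_0=\underline{u}_0\) I would exploit the \(3\)-homogeneity once more: the solution operator for \(-\Delta_\infty w=c\cdot(\text{fixed source})\) scales like \(c^{1/3}\), so the extremal solutions of the \((1\pm\epsilon)f\)-problems differ by a factor \(\big(\tfrac{1+\epsilon}{1-\epsilon}\big)^{1/3}\to1\), which together with the uniform bound pins \(\overline{u}_0=\underline{u}_0\). Making this last comparison rigorous when \(f\) genuinely depends on \(u\) (which is only assumed \emph{non-decreasing}, not strictly increasing) is the main technical hurdle; both the monotonicity and the one-sign assumptions on \(f\) are consumed there.
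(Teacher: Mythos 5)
The paper does not prove this corollary: it is quoted verbatim from \cite{BhatMoh11}, so there is no internal proof to compare against. Your attempt must stand on its own, and it has a genuine gap that you partly acknowledge. The reduction to $f>0$ via $u\mapsto -u$ is fine, and the sandwich $\overline{u}_\epsilon \le u_i \le \underline{u}_\epsilon$, obtained by comparing each given solution against the Perron solutions of the $(1\pm\epsilon)f$-problems through Lemma \ref{LemmaCP}, is also fine. But the argument does not close. After Arzel\`a--Ascoli and Lemma \ref{stability} you obtain two solutions $\overline{u}_0 \le u_1, u_2 \le \underline{u}_0$ of the \emph{same} Dirichlet problem \eqref{DirichetProbzero}; identifying $\overline{u}_0=\underline{u}_0$ is precisely the uniqueness being proved, so the scheme is circular unless this last identification is established independently. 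The $3$-homogeneity heuristic you offer does not do it: scaling a solution of $-\Delta_\infty w = f(x,w)$ by $\lambda$ gives $-\Delta_\infty(\lambda w)=\lambda^3 f(x,w)$, which is \emph{not} $\lambda^3 f(x,\lambda w)$ once $f$ depends on its second argument, so the extremal solutions of the $(1\pm\epsilon)f$-problems need not differ merely by a factor $\bigl((1+\epsilon)/(1-\epsilon)\bigr)^{1/3}$.

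The ``equivalent route'' you mention in passing is the one that actually works, and it should be the whole proof rather than an aside. Given two solutions $u,v$ with $f>0$, perturb $v$ directly: for $\lambda\in(0,1)$ set $c_\lambda:=(1-\lambda)\max\{\sup_\Omega v,\ \sup_{\partial\Omega} g\}$ and $v_\lambda:=\lambda v + c_\lambda$, so that $v_\lambda\ge u$ on $\partial\Omega$. Then $-\Delta_\infty v_\lambda = \lambda^3 f\bigl(x,(v_\lambda-c_\lambda)/\lambda\bigr)=:\widetilde f_\lambda(x,v_\lambda)$, and on the bounded range of $t$-values actually involved one has $(t-c_\lambda)/\lambda\le t$, whence $\widetilde f_\lambda(x,t)\le \lambda^3 f(x,t)<f(x,t)$ strictly by positivity and monotonicity. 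The comparison principle (Lemma \ref{LemmaCP}, or the local-range refinement of it used in \cite{BhatMoh11}) then gives $u\le v_\lambda$ in $\Omega$, and letting $\lambda\uparrow 1$ yields $u\le v$; the symmetric inequality finishes. This bypasses the penalization, Perron construction, Arzel\`a--Ascoli, and stability entirely and, crucially, never has to identify two limiting solutions with one another.
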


Next, we present a type of stability result.

\begin{lemma}[{\cite[Lemma 5.1]{BhatMoh11}}] \label{stability}  
Let \(\{\zeta_k\}_{k=1}^{\infty}\) be a sequence of non-negative functions in \(C^0(\overline{\Omega})\) such that \(\zeta_k \to \zeta\) locally uniformly in \(\Omega\) for some \(\zeta \in C^0(\overline{\Omega})\). Suppose that for each positive integer \(k\), \(u_k \in C(\overline{\Omega})\) is a solution of the Dirichlet problem
\begin{equation*}
\left\{
\begin{array}{rclccc}
-\Delta_{\infty} u_k(x) &=& \zeta_k(x) & \text{in} & \Omega, \\
u_k(x) &=& g(x) & \text{on} & \partial \Omega,
\end{array}
\right.
\end{equation*}
with the property that \(u_0 \leq u_k \leq u_{\infty}\) in \(\Omega\) for some functions \(u_0\) and \(u_{\infty}\) in \(C^0(\overline{\Omega})\), where \(u_0|_{\partial \Omega} = u_{\infty}|_{\partial \Omega} = g\). Then, the sequence \(\{u_k\}_{k \in \mathbb{N}}\) has a subsequence that converges locally uniformly in \(\Omega\) to a solution \(u \in C^0(\overline{\Omega})\) of the Dirichlet problem
\begin{equation*}
\left\{
\begin{array}{rclccc}
-\Delta_{\infty} u(x) &=& \zeta(x) & \text{in} & \Omega, \\
u(x) &=& g(x) & \text{on} & \partial \Omega.
\end{array}
\right.
\end{equation*}

\end{lemma}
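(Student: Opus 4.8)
The plan is to combine an interior regularity (compactness) estimate, the Arzel\`a--Ascoli theorem, a squeeze argument for the boundary values, and the classical stability of viscosity solutions under locally uniform convergence of both the solutions and the right-hand sides.

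\textbf{Step 1: local compactness of $\{u_k\}$.} Since $\zeta_k \to \zeta$ locally uniformly and $\zeta \in C^0(\overline{\Omega})$, the functions $\zeta_k$ are uniformly bounded on every compact set $K \subset\subset \Omega$. By the interior Lipschitz estimate for viscosity solutions whose $\infty$-Laplacian is (locally) bounded --- see Lindgren \cite{Lind14}, Lu--Wang \cite{LuWang08}, and \cite{BhatMoh11} --- the family $\{u_k\}$ is locally uniformly Lipschitz in $\Omega$. Together with the pointwise bound $u_0 \le u_k \le u_\infty$, which yields a uniform $L^\infty$ bound on each $K$ because $u_0,u_\infty$ are fixed continuous functions on $\overline{\Omega}$, Arzel\`a--Ascoli produces a subsequence $\{u_{k_j}\}$ converging locally uniformly in $\Omega$ to some $u$; this limit is itself locally Lipschitz, hence continuous, in $\Omega$.

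\textbf{Step 2: boundary values.} Passing to the limit in $u_0 \le u_{k_j} \le u_\infty$ gives $u_0 \le u \le u_\infty$ in $\Omega$. Since $u_0,u_\infty \in C^0(\overline{\Omega})$ with $u_0|_{\partial\Omega} = u_\infty|_{\partial\Omega} = g$, this squeeze forces $u$ to extend continuously to $\partial\Omega$ with $u = g$ there, so $u \in C^0(\overline{\Omega})$ attains the correct datum.

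\textbf{Step 3: stability, the analytic heart.} I would verify that $u$ solves $-\Delta_\infty u = \zeta$ in the viscosity sense. Suppose $\varphi \in C^2$ touches $u$ from above at $x_0 \in \Omega$; replacing $\varphi$ by $\varphi(x) + |x-x_0|^4$ we may assume the touching is strict, and this perturbation does not change $\Delta_\infty\varphi(x_0)$ since $D(|x-x_0|^4)$ vanishes at $x_0$. Local uniform convergence of $u_{k_j}$ to $u$ then yields points $x_j \to x_0$ at which $u_{k_j} - \varphi$ attains a local maximum, so $-\Delta_\infty \varphi(x_j) \le \zeta_{k_j}(x_j)$; letting $j \to \infty$ and using continuity of $x \mapsto \Delta_\infty\varphi(x)$ together with $\zeta_{k_j}(x_j) \to \zeta(x_0)$ (a consequence of locally uniform convergence $\zeta_{k_j}\to\zeta$ and continuity of $\zeta$) gives $-\Delta_\infty\varphi(x_0) \le \zeta(x_0)$. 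The super-solution inequality is obtained symmetrically with test functions touching from below. Hence $u$ is a viscosity solution of the limiting Dirichlet problem.

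\textbf{Main obstacle.} The delicate point is Step 1: extracting a convergent subsequence requires a uniform interior modulus of continuity for the $u_k$, which is precisely the nontrivial local Lipschitz regularity of functions with bounded $\infty$-Laplacian. Once that estimate is invoked, the boundary squeeze of Step 2 and the standard viscosity-stability argument of Step 3 are routine.
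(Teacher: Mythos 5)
The paper does not actually prove this lemma; it is quoted verbatim from \cite{BhatMoh11} (their Lemma 5.1) and invoked as a known tool, so there is no in-paper proof to compare against. That said, your reconstruction is correct and matches the standard route taken in the cited source and elsewhere: the uniform bound \(u_0 \le u_k \le u_\infty\) plus the locally uniform boundedness of \(\zeta_k\) feed into the interior Lipschitz estimate for the inhomogeneous \(\infty\)-Laplacian (the paper itself records this as Theorem~\ref{Reg_Lipsc}, quoting \cite{Lind14}), which after a diagonal/exhaustion argument yields a locally uniformly convergent subsequence; the squeeze between \(u_0\) and \(u_\infty\) recovers the boundary datum; and the strict-touching perturbation argument transfers the viscosity inequalities to the limit, using that \(\zeta_{k_j}(x_j)\to\zeta(x_0)\) by the combined locally uniform convergence and continuity. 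One small point worth making explicit: the hypothesis that the \(\zeta_k\) are non-negative is not used in this stability argument (you correctly never invoke it); it appears in \cite{BhatMoh11} only because their surrounding existence/uniqueness framework requires one-signed sources, not because stability needs it.
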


\begin{remark}\label{Remark2.1}
We must stress that if the sequence \(\{u_k\}_{k \in \mathbb{N}}\) in the  Lemma \ref{stability} is monotonic, then the full sequence converges locally uniformly.
\end{remark}

Next, we will assume that \(f: \Omega \times \mathbb{R} \to \mathbb{R}\) satisfies the following condition: for every compact interval 
\(\mathrm{I} \subset \mathbb{R}\),
\begin{equation}\label{EqCond_f}
\displaystyle \sup_{\Omega \times \mathrm{I}} |f(x, t)| < \infty.
\end{equation}

\begin{theorem}[{\bf \(L^{\infty}\) bounds - \cite[Theorem 5.3]{BhatMoh12}}]\label{A priori L_infty bounds}
Let \(\Omega \subset \mathbb{R}^n\) be a bounded domain, \(g \in C^0(\partial \Omega)\), and \(f \in C^0(\Omega \times \mathbb{R}, \mathbb{R})\) such that \eqref{EqCond_f} holds. Assume that

\begin{equation}\label{Growth_Cond_f}
\left\{
\begin{array}{cc}
\text{(i)} & \quad \displaystyle \liminf_{t \to \infty} \inf \frac{f(x, t)}{t^3} := \mathfrak{L}_{-}, \\
\text{(ii)} & \quad \displaystyle \liminf_{t \to -\infty} \sup \frac{f(x, t)}{t^3} := \mathfrak{L}_{+}
\end{array}
\right.
\end{equation}
for some \(\mathfrak{L}_{\pm} \in [0, \infty]\). Then, there exists a constant \(\mathrm{C} > 0\), depending on \(f\), \(g\), and \(\text{diam}(\Omega)\), such that 
\[
\| u \|_{L^\infty(\Omega)} \leq \mathrm{C}
\]
for any solution \(u \in C^0(\Omega)\) of \eqref{DirichetProbzero}.
\end{theorem}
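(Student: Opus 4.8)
The plan is to reduce the a priori estimate to a comparison against the explicit $\tfrac{4}{3}$-homogeneous radial profiles already recorded in Example~\ref{RadialEx}, and then to close a scalar self-improving inequality that exploits the cubic growth encoded in hypotheses (i)--(ii). First I note that the upper and lower bounds are interchangeable: if $u$ solves $-\Delta_\infty u = f(x,u)$, then $-u$ solves $-\Delta_\infty(-u) = \widetilde{f}(x,-u)$ with $\widetilde{f}(x,t) := -f(x,-t)$, a substitution that swaps the roles of (i) and (ii). Hence it suffices to bound $\sup_\Omega u$ from above under hypothesis (i) and then apply the result to $-u$.

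\textbf{Step 1: a linear barrier estimate.} Fix $z \in \mathbb{R}^n$ and $\rho \simeq \mathrm{diam}(\Omega)$ with $\overline{\Omega} \subset B_\rho(z)$, and set $G := \|g\|_{L^\infty(\partial\Omega)}$. We may assume $N := \|u\|_{L^\infty(\Omega)} < \infty$ (automatic if $u$ attains its boundary data continuously, and otherwise recovered through a standard truncation of $f$ followed by a limit, using Lemma~\ref{stability} and Remark~\ref{Remark2.1}). Then $h(x) := f(x,u(x))$ is bounded, say $|h| \le H$, and the concave function
$$
w(x) := G + \Big( \tfrac{81}{64}\, H \Big)^{1/3}\Big( \rho^{4/3} - |x - z|^{4/3} \Big)
$$
is a viscosity solution of $-\Delta_\infty w \equiv H \ge h$ in $\Omega$ --- here $\tfrac{64}{81}$ is precisely the constant pinned down in Example~\ref{RadialEx} for $\alpha = m = 0$ --- and $w \ge G \ge g$ on $\partial\Omega$. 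Since $u$ solves $-\Delta_\infty u = h$ with a bounded right-hand side, it is a viscosity subsolution of $-\Delta_\infty v = H$; as $H \ge 0$ is a single-signed (or zero) source, the corresponding comparison principle (Lu--Wang \cite{LuWang08}, with Lemma~\ref{MP} covering $H = 0$) yields $u \le w$ in $\Omega$, and therefore
$$
\sup_\Omega u \;\le\; G + \Big( \tfrac{81}{64} \Big)^{1/3} \rho^{4/3}\, \| f(\cdot, u) \|_{L^\infty(\Omega)}^{1/3}.
$$

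\textbf{Step 2: closing the loop.} Let $\Phi(N) := \sup_{\Omega \times [-N,N]} |f|$, finite by \eqref{EqCond_f}. Hypotheses (i)--(ii), together with \eqref{EqCond_f} on bounded $t$-intervals, give $\Phi(N) \le \Lambda(1 + N^3)$ for some $\Lambda = \Lambda(f)$ --- with $\Lambda$ as close to $\max\{\mathfrak{L}_-, \mathfrak{L}_+\}$ as desired when these are finite, and with $\Phi(N) = o(N^3)$ when $\mathfrak{L}_\pm = 0$ (e.g. for the archetype $f \simeq |x|^\alpha t_+^m$ with $m < 3$). Substituting into Step~1,
$$
N \;\le\; G + \mathrm{c}_n\, \rho^{4/3}\big( \Lambda(1 + N^3) \big)^{1/3} \;\le\; G + \mathrm{c}_n\, \rho^{4/3}\Lambda^{1/3}(1 + N).
$$
In the subcubic regime $\mathfrak{L}_\pm = 0$ this already forces $N \le \mathrm{C}\big(n, f, \mathrm{diam}(\Omega)\big)$ --- either $N \le 2(G+1)$, or $N^{1 - m/3} \le \mathrm{c}_n\, \rho^{4/3 + \alpha/3}$ for the archetype. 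In the supercubic regime $\mathfrak{L}_\pm = +\infty$ one instead replaces $w$ by a Keller--Osserman-type boundary-blow-up supersolution of $-\Delta_\infty v = \tfrac12 \mathfrak{L}_- v_+^3$ on $B_\rho(z)$, which remains finite in the interior precisely because the absorption is supercubic, and compares it with $u$. Removing the auxiliary truncation of $f$ then gives the stated bound.

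\textbf{The main obstacle.} I expect the genuinely delicate regime to be the critical cubic case $0 < \mathfrak{L}_\pm < \infty$: the last display then reads $N\big(1 - \mathrm{c}_n\, \rho^{4/3}\Lambda^{1/3}\big) \le G + \mathrm{c}_n\, \rho^{4/3}\Lambda^{1/3}$, which bounds $N$ only once $\mathrm{c}_n\, \mathrm{diam}(\Omega)^{4/3}\Lambda^{1/3} < 1$; so the dependence of $\mathrm{C}$ on $\mathrm{diam}(\Omega)$ is unavoidable and degenerates at this threshold, in agreement with the borderline behaviour as $m \to 3$ for equation \eqref{Crit_Eq}. A secondary technical point is that $f$ need not be monotone in $t$, so the comparison in Step~1 cannot be deduced from Lemma~\ref{LemmaCP} as stated; one circumvents this by freezing $h(x) = f(x, u(x))$, which recasts the comparison as one for a source-type equation with a single-signed constant right-hand side.
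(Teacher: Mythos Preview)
First, note that the paper does not prove this theorem: it is quoted from \cite[Theorem~5.3]{BhatMoh12} as a preliminary tool, so there is no in-paper proof to compare against.

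Your argument has a genuine gap in Step~2. You claim that hypotheses (i)--(ii) together with \eqref{EqCond_f} yield the two-sided bound $\Phi(N)=\sup_{\Omega\times[-N,N]}|f|\le\Lambda(1+N^3)$, but this does not follow. Condition (i) reads $\liminf_{t\to\infty}\inf_x f(x,t)/t^3=\mathfrak{L}_-\ge 0$, which is a \emph{lower} bound on $f$ for large positive $t$ and says nothing whatsoever about $\sup_x f$; condition (ii) is symmetrically an \emph{upper} bound on $f$ for large negative $t$. These are one-sided coercivity conditions, not growth restrictions on $|f|$. For instance, on $\Omega=B_1$ the function $f(x,t)=t^3(1+|x|\,t^2)$ satisfies $\inf_x f(x,t)/t^3\equiv 1$, so $\mathfrak{L}_-=1$ is finite, yet $\Phi(N)\ge N^3+N^5$; likewise $f(t)=2\sinh t$ gives $\mathfrak{L}_\pm=+\infty$ with $\Phi(N)\sim e^N$. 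Consequently the quantity $H$ in your Step~1 cannot be controlled in terms of $N$, and the self-improving inequality never closes --- the smallness condition $\mathrm{c}_n\rho^{4/3}\Lambda^{1/3}<1$ you flag is not a technical nuisance but a symptom of the wrong orientation of the whole argument.

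The intended mechanism runs in the opposite direction. The hypotheses let you bound $-\Delta_\infty u$ from \emph{below} on the set where $u$ is large, since $f(x,u)\ge(\mathfrak{L}_--\varepsilon)u^3$ there; one then compares $u$ \emph{from above} on $\{u>T_\varepsilon\}$ with a radial barrier that is forced downward by this large positive right-hand side (a concave $\tfrac{4}{3}$-profile, or a Keller--Osserman supersolution in the strictly coercive case), with boundary values controlled by $T_\varepsilon$ and $\|g\|_\infty$. Your Step~1 comparison points the wrong way: you bound $-\Delta_\infty u\le H$ and compare with a supersolution of $-\Delta_\infty w=H$, but the hypotheses give you no handle on $H$.
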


The next result provides a version of the Harnack inequality for the inhomogeneous \(\infty\)-Laplacian.

\begin{theorem}[{\bf Harnack inequality - \cite[Theorem 7.1]{BhatMoh12}}]\label{Harnack inequality}
Suppose \(h \in C^0(\Omega) \cap L^{\infty}(\Omega)\), and let \(u \in C^0(\Omega)\) be a non-negative viscosity super-solution of 
\[
\Delta_{\infty} u(x) = h(x) \quad \text{in} \quad \Omega.
\]
Then, for any \(z \in \Omega\) such that \(B_{2r}(z) \subseteq \Omega\), we have
\[
\sup_{\mathrm{B}} u(x) \leq 9 \inf_{\mathrm{B}} u(x) + 12 \sigma \left( r^4 \sup_{\Omega} h_{+}(x) \right)^{1/3},
\]
where \(\mathrm{B} := B_{2r/3}(z)\) and \(\sigma = \frac{3^{\frac{3}{4}}}{4}\).
\end{theorem}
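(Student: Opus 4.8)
The plan is to transplant the classical proof of the Harnack inequality for $\infty$-harmonic functions — which is driven by comparison with cones — into the inhomogeneous setting, absorbing the forcing term $h$ by comparison with the explicit radial profiles solving $\Delta_{\infty}w=\mathrm{const}$. Translate so that $z=0$ and put $M:=\sup_{\Omega}h_{+}\ge 0$; then a nonnegative viscosity supersolution $u$ of $\Delta_{\infty}u=h$ is, in particular, a viscosity supersolution of $\Delta_{\infty}w=M$ in $B_{2r}(0)$. The first step is to establish, for every ball $B_{\rho}(\xi)\subseteq B_{2r}(0)$, a perturbed ``comparison with cones from below'':
\[
u(\xi)\;\le\;\frac{\rho}{\rho-|x-\xi|}\,u(x)\;+\;\mathrm{c}_{n}\bigl(\rho^{4}M\bigr)^{1/3}\qquad\text{for all }x\in B_{\rho}(\xi),\ |x-\xi|<\rho .
\]
This is obtained by comparing $u$ from below, on a punctured ball $B_{\rho}(\xi)\setminus\{\xi\}$, with a suitable radial barrier assembled from the downward cone $\bigl(\min_{\partial B_{\rho}(\xi)}u\bigr)\frac{\rho-|x-\xi|}{\rho}$ and the model profile $\bigl(\tfrac{81M}{64}\bigr)^{1/3}|x-\xi|^{4/3}$ of $\Delta_{\infty}w=M$, invoking the comparison principle (Lemma \ref{LemmaCP}) in its viscosity sub/super form and then letting the inner radius tend to $0$ while using the continuity of $u$. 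When $M=0$ this is exactly the cone estimate, and the additive term switches off.

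Granting this one-point estimate, I would conclude by running a two-step Harnack chain joining the point where $\sup_{B}u$ is attained to the point where $\inf_{B}u$ is attained (here $B=B_{2r/3}(0)$), passing through the center $0$. Let $x^{\ast}\in\overline{B}$ with $u(x^{\ast})=\sup_{B}u$ and $y_{0}\in\overline{B}$ with $u(y_{0})=\inf_{B}u$; note $|x^{\ast}|,|y_{0}|\le 2r/3$. Applying the estimate with $\xi=x^{\ast}$, $x=0$ and a ball $B_{\rho_{1}}(x^{\ast})\subseteq B_{2r}(0)$ with $0\in B_{\rho_{1}}(x^{\ast})$ gives $u(x^{\ast})\le \lambda_{1}u(0)+\mathrm{c}_{n}(r^{4}M)^{1/3}$; applying it again with $\xi=0$, $x=y_{0}$ and $\rho_{2}=2r$ gives $u(0)\le \lambda_{2}u(y_{0})+\mathrm{c}_{n}(r^{4}M)^{1/3}$, where $\lambda_{1},\lambda_{2}$ are bounded universal amplification factors coming from the ratios $\tfrac{\rho}{\rho-|x-\xi|}$. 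Chaining the two inequalities produces
\[
\sup_{B}u\;\le\;\lambda_{1}\lambda_{2}\,\inf_{B}u\;+\;\mathrm{C}_{n}\bigl(r^{4}M\bigr)^{1/3},
\]
and — with the (non-optimal) radii chosen in \cite{BhatMoh12} — the bookkeeping of constants yields $\lambda_{1}\lambda_{2}=9$ and $\mathrm{C}_{n}=12\sigma$ with $\sigma=\tfrac{3^{3/4}}{4}$. The solvability and uniqueness of the radial model problems used to build the barriers are furnished by Theorem \ref{Existence_Solution} and Corollary \ref{Uniqueness}.

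The crux of the argument is that $\Delta_{\infty}$ is genuinely nonlinear — not even in divergence form — so one cannot write $u$ as an $\infty$-harmonic function plus a particular solution of the inhomogeneous equation; the source correction must be built \emph{inside} the barrier rather than added afterwards. The two delicate points are therefore: (i) the cone-type barriers are only one-sidedly $\infty$-harmonic at their vertices, so the comparison must be performed away from the vertex and then passed to the limit; and (ii) one must verify that the hybrid (cone plus $4/3$-power) barrier genuinely lies below $u$ — i.e.\ that it is a viscosity subsolution of $\Delta_{\infty}w=M$ on the relevant region and is correctly ordered with $u$ on the bounding spheres — and it is exactly this verification that pins down the admissible amplification factors and the value of $\sigma$. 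Everything else (attainment of the sup and inf, containment of the chain of balls, scaling of the additive term) is routine.
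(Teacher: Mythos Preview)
The paper does not prove this statement; Theorem \ref{Harnack inequality} is quoted without proof from \cite[Theorem~7.1]{BhatMoh12} as a preliminary tool (it is invoked once, in the contradiction step of the proof of Theorem~\ref{Hessian_continuity}). There is therefore no ``paper's own proof'' to compare against. For what it is worth, your sketch --- reduce to the constant right-hand side $M=\sup_{\Omega}h_{+}$, establish a perturbed comparison-with-cones-from-below estimate by building a barrier out of the downward cone and the explicit $|x|^{4/3}$ profile of $\Delta_{\infty}w=M$, and then run a two-step Harnack chain through the centre --- is exactly the strategy carried out in the cited reference, and your identification of the two genuinely delicate points (the vertex singularity of the cone barrier, and the verification that the hybrid barrier is a bona fide viscosity subsolution with the correct boundary ordering) is accurate.
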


The next theorem provides a straightforward version of local Lipschitz regularity for the inhomogeneous problem.

\begin{theorem}[{\bf Local Lipschitz regularity - \cite[Corollary 2]{Lind14}}]\label{Reg_Lipsc} 
Let \( u \in C^0(B_1) \) be a solution of 
\[
-\Delta_{\infty} u = f \in L^{\infty}(B_1) \cap C^0(B_1).
\]
Then, \( u \) is locally Lipschitz, and in particular,
\[
\| u \|_{C^{0,1}(B_{1/2})} \leq \mathrm{C} \cdot \left( \| u \|_{L^{\infty}(B_1)} + \| f \|_{L^{\infty}(B_1)}^{\frac{1}{3}} \right).
\]
\end{theorem}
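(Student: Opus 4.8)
The plan is to prove the estimate by comparison with explicit radial barriers built from the two profiles $|x-x_0|$ and $|x-x_0|^{4/3}$. The point is the elementary identity $\Delta_\infty\big(|x-x_0|^{4/3}\big)=\tfrac{64}{81}$, together with $\Delta_\infty\big(a|x-x_0|\big)=0$ away from the vertex; the exponent $\tfrac43$ is exactly what makes the first quantity a positive constant, and this is what will produce the $\|f\|_{L^\infty}^{1/3}$ dependence. Since $\Delta_\infty$ is odd (replacing $u$ by $-u$ flips its sign), the function $-u$ solves $-\Delta_\infty(-u)=-f$ with the same data norms, so it suffices to prove the one-sided bound $u(x)-u(x_0)\le \mathrm{C}\big(\|u\|_{L^\infty(B_1)}+\|f\|_{L^\infty(B_1)}^{1/3}\big)|x-x_0|$ for every $x_0\in B_{1/2}$ and every $x$ with $|x-x_0|\le\tfrac14$; the full Lipschitz norm on $B_{1/2}$ then follows by subdividing a segment between two points of the (convex) ball $B_{1/2}$ into pieces of length at most $\tfrac14$ and telescoping.

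To obtain the one-sided bound, fix $x_0\in B_{1/2}$, put $r=\tfrac14$, $M=\|f\|_{L^\infty(B_1)}$, $S_r=\max_{\partial B_r(x_0)}u$, and choose $b=c_n M^{1/3}$ with $c_n>(81/64)^{1/3}$ so that $\tfrac{64}{81}b^3>M$, together with $a=\big((S_r-u(x_0))/r-b r^{1/3}\big)_+\ge 0$. For $\eta>0$ consider $v_\eta(x)=u(x_0)+\eta+a|x-x_0|+b|x-x_0|^{4/3}$, which on $B_r(x_0)\setminus\{x_0\}$ is $C^2$, and using the radial formula $\Delta_\infty(\psi(|x-x_0|))=(\psi')^2\psi''$ for $\psi(\rho)=a\rho+b\rho^{4/3}$ one gets $\Delta_\infty v_\eta\ge\tfrac{64}{81}b^3>M\ge\Delta_\infty u$ there; thus $v_\eta$ is a strict supersolution of the equation satisfied by $u$ on that punctured ball. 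By the choice of $a$ one has $v_\eta\ge u$ on $\partial B_r(x_0)$, and $v_\eta\ge u(x_0)+\eta\ge u$ on $\partial B_\varepsilon(x_0)$ once $\varepsilon$ is small enough (depending on $\eta$ and the modulus of continuity of $u$ at $x_0$). Applying the comparison principle (cf. Lemma~\ref{LemmaCP}) on the annulus $B_r(x_0)\setminus\overline{B_\varepsilon(x_0)}$, then letting $\varepsilon\to0$ and $\eta\to0$, yields $u(x)\le u(x_0)+a|x-x_0|+b|x-x_0|^{4/3}$ throughout $B_r(x_0)$. Finally $a\le (S_r-u(x_0))_+/r\le 2\|u\|_{L^\infty(B_1)}/r$ and $b|x-x_0|^{4/3}\le b r^{1/3}|x-x_0|$, which gives the claimed linear bound with $r=\tfrac14$.

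The main technical obstacle is the behaviour at the vertex $x_0$: there $|x-x_0|$ is not $C^2$, and $a|x-x_0|$ need not be a viscosity $\infty$-supersolution (this depends on $\mathrm{sgn}\,f(x_0)$), so comparison cannot be run directly on the full ball $B_r(x_0)$. The remedy — working on the punctured ball, the auxiliary vertical shift $\eta$, and the two successive limits — is routine but must be carried out with care; everything else is the barrier computation above. We remark that this statement is \cite[Corollary~2]{Lind14}; alternatively one could regularize $f$ by smooth $f_k\to f$ locally uniformly, pass to the limit with the stability result (Lemma~\ref{stability}), and recover the estimate from the same barriers applied uniformly in $k$, but the direct barrier argument has the advantage of exhibiting the optimal dependence on $\|f\|_{L^\infty}^{1/3}$.
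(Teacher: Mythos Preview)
The paper does not prove this statement; it is quoted verbatim from \cite[Corollary~2]{Lind14} as a preliminary tool, and no argument is supplied. So there is no ``paper's own proof'' to compare against here.

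Your barrier argument, however, contains a sign error that invalidates the comparison step. The function $v_\eta(x)=u(x_0)+\eta+a|x-x_0|+b|x-x_0|^{4/3}$ with $a,b>0$ satisfies $\Delta_\infty v_\eta\ge \tfrac{64}{81}b^3>0$ on the punctured ball, and this makes $v_\eta$ a viscosity \emph{subsolution} of $\Delta_\infty w=M$, not a supersolution. Since $u$ is a \emph{supersolution} of $\Delta_\infty w=M$ (because $\Delta_\infty u=-f\le M$), the correct comparison direction would yield $v_\eta\le u$ provided $v_\eta\le u$ on the boundary --- the opposite of what you arranged and of what you need. A one--line check: on $\Omega=(0,1)$ take $u(x)=x$ and $v(x)=x^{4/3}$; then $\Delta_\infty v=\tfrac{64}{81}>0=\Delta_\infty u$ and $u=v$ on $\partial\Omega$, yet $u(x)=x>x^{4/3}=v(x)$ throughout $(0,1)$. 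Thus ``$\Delta_\infty v_\eta>\Delta_\infty u$ together with $v_\eta\ge u$ on the boundary'' does \emph{not} force $u\le v_\eta$. (You may have been misled by the sign convention in the statement of Lemma~\ref{LemmaCP}; note that in the paper's own use of that lemma in the non-degeneracy proof the roles of $f_1,f_2$ are applied with the equation written as $\Delta_\infty u=f$, not $-\Delta_\infty u=f$.)

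The remedy is to use a \emph{concave} upper barrier, e.g.\ $w(x)=u(x_0)+\eta+a|x-x_0|-b|x-x_0|^{4/3}$. For $\psi(\rho)=a\rho-b\rho^{4/3}$ one has $\psi''<0$, hence $\Delta_\infty w=(\psi')^2\psi''\le 0$; choosing $a,b$ so that $(\psi')^2|\psi''|\ge M$ on $(0,r)$ (the inequality is weakest at $\rho=r$, and one must also keep $\psi'>0$ there) makes $w$ a genuine supersolution of $\Delta_\infty=-M$, while $u$ is a subsolution of the same equation since $\Delta_\infty u=-f\ge -M$. Then comparison and the boundary ordering give $u\le w$, and the desired one--sided Lipschitz bound follows. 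The bookkeeping is a little more delicate than in your version, but this is essentially the route in \cite{Lind14}.
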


\section{Proof of Theorem \ref{Hessian_continuity}}

In this section, we will present the proof of Theorem \ref{Hessian_continuity}.

\begin{proof}[{\bf Proof of Theorem \ref{Hessian_continuity}}]  For simplicity, and without loss of generality, we assume \( x_0 = 0 \). By combining discrete iterative techniques with continuous reasoning, see \cite{CKS00}, it is well established that proving estimate \eqref{Higher Reg} is equivalent to verifying the existence of a universal constant \( \mathrm{C} > 0 \), such that for all \( j \in \mathbb{N} \), the following holds:
\begin{equation}\label{higher1}
\mathfrak{s}_{j+1} \leq \max\left\{\mathrm{C} 2^{-\hat{\beta}(j+1)}, 2^{-\hat{\beta}} \mathfrak{s}_j\right\},
\end{equation}
where 
\begin{equation}\label{beta_hat}
\mathfrak{s}_j \coloneqq \sup_{B_{2^{-j}}} u \quad \text{and} \quad \hat{\beta} \coloneqq \frac{4 + \alpha}{3 - m}.
\end{equation}
Suppose, for the sake of contradiction, that \eqref{higher1} fails to hold, \textit{i.e.,} for each \( k \in \mathbb{N} \), there exists \( j_k \in \mathbb{N} \) such that
\begin{equation}\label{higher2}
\mathfrak{s}_{j_k + 1} > \max\left\{k 2^{-\hat{\beta}(j_k + 1)}, 2^{-\hat{\beta}} \mathfrak{s}_{j_k}\right\}.
\end{equation}

Now, for each \( k \in \mathbb{N} \), we define the rescaled function \( v_k: B_1 \rightarrow \mathbb{R} \) as
$$
v_k(x) \coloneqq \frac{u(2^{-j_k} x)}{\mathfrak{s}_{j_k + 1}}. 
$$
Note that
\begin{eqnarray}
0 \leq v_k(x) \leq 2^{\hat{\beta}}; \label{vk1}\\
v_k(0) = 0; \label{vk2}\\ 
\sup_{\overline{B}_{1/2}} v_k(x) = 1. \label{vk3}
\end{eqnarray}
Moreover, observe that
\begin{align}
\Delta_\infty v_k &= \langle D^2 v_k(x) D v_k(x), D v_k(x) \rangle \nonumber\\[0.2cm]
&= \left\langle \frac{2^{-2j_k}}{\mathfrak{s}_{j_k + 1}} D^2 u\left(2^{-j_k} x\right) \left(\frac{2^{-j_k}}{\mathfrak{s}_{j_k + 1}} D u\left(2^{-j_k} x\right)\right), \left(\frac{2^{-j_k}}{\mathfrak{s}_{j_k + 1}} D u\left(2^{-j_k} x\right)\right) \right\rangle \nonumber\\[0.2cm]
&= \frac{2^{-4j_k}}{\mathfrak{s}_{j_k + 1}^3} \Delta_\infty u\left(2^{-j_k} x\right) \nonumber\\[0.2cm]
&\coloneqq f_k. \label{v_k-solution}
\end{align}
Now, using \eqref{higher2}, we can estimate
$$
|f_k| \leq \frac{c_n \|f_0\|_{L^\infty(B_1)}}{2^{(4 + \alpha)(1 + 2j_k)} k^{3 - m}}.
$$

Next, by invoking the Harnack inequality (see Theorem \ref{Harnack inequality}), along with \eqref{vk2}, \eqref{vk3}, and \eqref{v_k-solution}, we obtain that
\begin{eqnarray*}
1 & = & \sup_{B_{1/2}} v_k(x)\\
& \leq & 9 \inf_{B_{1/2}} v_k(x) + 12 \sigma \left(\frac{1}{2}\right)^{\frac{4}{3}} \left( \sup_{B_{1/2}} f_k(x) \right)^{1/3}\\
& \leq & 12 \sigma \left(\frac{1}{2}\right)^{\frac{4}{3}} \left(\frac{c_n \|f_0\|_{L^\infty(B_1)}}{2^{(4 + \alpha)(1 + 2j_k)} k^{3 - m}}\right)^{\frac{1}{3}}\\
& \rightarrow & 0 \quad \text{as} \quad k \rightarrow \infty, 
\end{eqnarray*}
which yields a contradiction.

Finally, given \( r \in (0, 1) \), let \( j \in \mathbb{N} \) be such that
$$
2^{-(j + 1)} \leq r \leq 2^{-j}.
$$
Then, by using \eqref{higher1}, we establish the existence of a universal constant \( \mathrm{C} > 0 \) such that
$$
\sup_{B_r} u \leq \sup_{B_{2^{-j}}} u \leq \frac{\mathrm{C}}{2^{\hat{\beta}}} r^{\hat{\beta}}.
$$
Thus, the theorem is proven.
\end{proof}

As a consequence of the above theorem, we can obtain a finer control of the gradient near a free boundary point \( x_0 \) where the source function \( x \mapsto f(|x|, u) \) vanishes as \( \mathrm{dist}(x_0, \partial \{u > 0\})^{\alpha} \).

\begin{proof}[{\bf Proof of Corollary \ref{gradiente control}}]
    Let \( x_0 \in \{ u > 0\} \cap B_{1/2} \) be a point such that \( f(x, t) \simeq |x - \zeta(x_0)|^{\alpha} t_+^m \), where \( \zeta = \zeta(x_0) \in \partial \{ u > 0\} \) satisfies
    $$
        |x_0 - \zeta(x_0)| = d = \text{dist}(x_0, \partial \{ u > 0\}).
    $$
    From Theorem \ref{Hessian_continuity}, we know that 
    \begin{equation}\label{apply theo H}
        \sup\limits_{B_d(x_0)} u \le \sup\limits_{B_{2d}(\zeta)} u \le \mathrm{C}_1(\text{universal}) d^{\frac{4 + \alpha}{3 - m}}.
    \end{equation}
    Next, we set \( \beta := \frac{4 + \alpha}{3 - m} \) and consider the auxiliary function \( v: B_1 \to \mathbb{R} \) defined by 
    $$
        v(x) := \frac{u(x_0 + dx)}{d^{\beta}}. 
    $$
    It is easy to verify that \( v \) is a viscosity solution to 
    $$
        \Delta_{\infty} v = g(x, v), 
    $$
    where \( g(x, v) = d^{4 - 3\beta} f(|x_0 + dx|, d^{\beta} v) \). Moreover, we have 
    $$
    \sup\limits_{B_1} g(x, v) \le \mathfrak{L}_{\ast}(\text{universal}).
    $$
    Hence, from \eqref{apply theo H}, 
    $$
        \sup\limits_{B_1} v(x) \le \mathrm{C}_1(\text{universal}).
    $$
    Thus, from the gradient estimates for bounded solutions (see Theorem \ref{Reg_Lipsc}), we have 
    $$
\begin{array}{rcl}
    \frac{1}{d^{\beta - 1}} |D u(x_0)| & = & |D v(0)|  \\
    & \leq & \|v\|_{C^{0, 1}(B_{1/2})} \\
    & \leq & \mathrm{C} \left( \|v\|_{L^{\infty}(B_1)} + \|g\|^{\frac{1}{3}}_{L^{\infty}(B_1)} \right) \\
    & \leq & \mathrm{C}_2(\text{universal}).
\end{array}
$$
\end{proof}

\section{Non-degeneracy estimate: Proof of Theorem \ref{NãoDeg}}

In this section, we will prove Theorem \ref{NãoDeg}. 

To achieve this, for each $\varepsilon > 0$, we consider the following penalized problem:
\begin{equation}\label{Pe}
\tag{$\mathrm{P}_\varepsilon$} \Delta_\infty u_\varepsilon(x) = |x|^\alpha (u_\varepsilon)_+^m + \varepsilon \quad \text{in} \quad B_1, 
\end{equation}
where $u_\varepsilon = u$ on $\partial B_1.$

Observe that for each fixed $\varepsilon > 0$, the existence of such a solution $u_\varepsilon$ for the penalized problem \eqref{Pe} is an adaptation of Theorem \ref{Existence_Solution}. Additionally, we can express the right-hand side as $f_\varepsilon(x, t) = f(x, t) + \varepsilon$. Thus, we can ensure that $f_\varepsilon$ satisfies the condition \eqref{Growth_Cond_f}, and by Theorem \ref{A priori L_infty bounds}, the solutions $u_\varepsilon$ are bounded. Note that
$$
u \le u_{\varepsilon_1} \leq u_{\varepsilon_2} \cdots \leq u_{\varepsilon_j} \leq \cdots \leq \mathfrak{h}, 
$$
for every subsequence $\{\varepsilon_j\}_{j \in \mathbb{N}}$ with $\varepsilon_j \to 0^{+}$, where $\mathfrak{h}$ satisfies in the viscosity sense
$$
\Delta_{\infty} \mathfrak{h} = 0 \quad \text{in} \quad B_1 \quad \text{and} \quad \mathfrak{h}=u \quad \text{on} \quad \partial B_1.
$$
Finally, by Lemma \ref{Lipschitz continuous}, $u_\varepsilon$ is locally Lipschitz continuous. Consequently, the family $\{u_{\varepsilon}\}_{\varepsilon > 0}$ is precompact in the $C^{0,1}(B_{1})$ topology. Hence, by the Arzel\`{a}-Ascoli theorem, there exists a subsequence $\{u_{\varepsilon_{k_{j}}}\}_{k_{j}}$ that converges uniformly, i.e., there exists $u_{\infty} \in C^0(B_1)$ such that 

\begin{equation*}
    u_{\varepsilon_{k_{j}}} \to u_{\infty} \quad \text{as} \quad j \to \infty.
\end{equation*}

Next, our goal will be to prove the non-degeneracy estimate for solutions of \eqref{Pe} uniformly in $\varepsilon$.


\begin{proof}[{\bf Proof of Theorem \ref{NãoDeg}}]

Before addressing the proof of the desired result, we will show the corresponding estimate for viscosity solutions of \eqref{Pe} (uniformly in $\varepsilon$).

{\bf Step $1$:} By the radial analysis (see Example \ref{RadialEx}), we know that the function
$$
\Psi(x) = \left(\frac{(3-m)^4}{(4+\alpha)^3(1+\alpha+m)}\right)^{\frac{1}{3-m}} |x|^{\frac{4+\alpha}{3-m}}
$$
is a viscosity solution of 
$$
\Delta_\infty \Psi(x) = |x|^\alpha \Psi_+^m(x).
$$
We can assert the existence of a point $z_r \in \partial B_r(x_0)$ such that $u_\varepsilon(z_r) \geq \Psi(z_r)$. Indeed, if this were not the case, we could consider the functions $f_1(x, t) = |x|^\alpha t_+^m + \varepsilon$ and $f_2(x, t) = |x|^\alpha t_+^m$. By applying the Comparison Principle (see Lemma \ref{LemmaCP}), we would conclude that $\Psi \geq u_\varepsilon$ in the entire ball $B_r(x_0)$. However, this leads to the contradiction
$$
0 = \Psi(0) < u_\varepsilon(0).
$$
Therefore, we obtain the following estimate
\begin{eqnarray*}
\sup_{\partial B_r(x_0)} u_\varepsilon(x) & \geq & u_\varepsilon(z_r) \\ 
& \geq & \Psi(z_r) \\
& = & \left(\frac{(3-m)^4}{(4+\alpha)^3(1+\alpha+m)}\right)^{\frac{1}{3-m}} r^{\frac{4+\alpha}{3-m}},
\end{eqnarray*}
which concludes the desired estimate.

{\bf Step $2$:} Finally, by invoking Lemma \ref{stability} (resp. Remark \ref{Remark2.1}), and the uniqueness of the Dirichlet problem, we know that  
$$
 u_{\varepsilon_{k_{j}}} \to u_{\infty} = u \quad \text{as} \quad j \to \infty.
$$
Therefore, by uniform convergence, we conclude 
$$
\sup_{\partial B_r(x_0)} u(x) \leftarrow \sup_{\partial B_r(x_0)} u_{\varepsilon_j}(x)  \geq \left(\frac{(3-m)^4}{(4+\alpha)^3(1+\alpha+m)}\right)^{\frac{1}{3-m}} r^{\frac{4+\alpha}{3-m}},
$$
thereby finishing the proof.

\end{proof}

\section{Proof of Liouville-type Theorem \ref{Liouville I}}

As a consequence of Theorem \ref{Hessian_continuity}, we will prove Theorem \ref{Liouville I}.

\begin{proof}
For $k \in \mathbb{N}$, define
\begin{equation}\label{scal}
    u_{k}(x) = k^{-\frac{4+\alpha}{3-m}} u(kx), \quad x \in B_{1}.
\end{equation}
Note that $u_{k}(0) = 0$. Moreover, we have
$$
D u_k(x) = k^{\frac{{-1-\alpha-m}}{3-m}} Du(kx) \quad \text{and} \quad D^2u_k(x) = k^{\frac{2-\alpha-2m}{3-m}} D^2u(kx).
$$
Thus, we can compute
\begin{align*}
\Delta_\infty u_k(x) &= \left\langle D^2u_k(x) D u_k(x), D u_k(x) \right\rangle \\
&= \left\langle \left( k^{\frac{2-\alpha-2m}{3-m}} D^2u(kx) \right) \left( k^{\frac{{-1-\alpha-m}}{3-m}} D u(kx)\right), \left( k^{\frac{{-1-\alpha-m}}{3-m}} D u(kx)\right) \right\rangle \\
&= k^{\frac{-3\alpha-4m}{3-m}} \Delta_\infty u(kx) \\
&= k^{\frac{-3\alpha-4m}{3-m}} f_k(x, u_k), 
\end{align*}
where
$$
f_k(x, t) = k^{\frac{-3\alpha-4m}{3-m}} f(kx, k^{\frac{4+\alpha}{3-m}} t).
$$
It is important to note that $f_k$ satisfies \eqref{EqHomog-f}. Indeed, we have
\begin{align*}
|f_k(x, t)| & = k^{\frac{-3\alpha-4m}{3-m}} |f(kx, k^{\frac{4+\alpha}{3-m}} t)| \\
& \leq c_{n} k^{\frac{-3\alpha-4m}{3-m}} k^\alpha k^{({\frac{4+\alpha}{3-m}})m} |f_0(x)| \\
& = c_{n} k^{\frac{-3\alpha-4m}{3-m}} k^{\frac{\alpha(3-m)+(4+\alpha)m}{3-m}} |f_0(x)| \\
& = c_{n} |f_0(x)|.
\end{align*}

Now, let $x_{k} \in \overline{B}_{r}$ be such that

\begin{equation*}
    u_{k}(x_{k}) = \sup\limits_{\overline{B}_{r}} u_{k}.
\end{equation*}

Here, $r$ is small, and within $B_{r}$, we obtain

\begin{equation}\label{first}
    \|u_{k}\|_{L^{\infty}(B_{1})} \to 0 \quad \text{as} \quad k \to \infty.
\end{equation}

We will now analyze two cases. First, if $\lvert kx_{k}\rvert$ remains bounded as $k \to \infty$, then applying Theorem \ref{Hessian_continuity} to $u_{k}$ gives us
\begin{equation}\label{second}
    u_{k}(x_{k}) \leq \mathrm{C}_{k} \lvert x_{k}\rvert^{\frac{4+\alpha}{3-m}},
\end{equation}
where $\mathrm{C}_{k} > 0$ and $\mathrm{C}_{k} \to 0$ as $k \to \infty$. Recall equation \eqref{scal}, which leads to
\begin{equation*}
    u(kx_{k}) \leq \mathrm{C}_{k} \lvert k x_{k}\rvert^{\frac{4+\alpha}{3-m}}.
\end{equation*}

In other words, $u(kx_{k})$ remains bounded as $k$ approaches infinity, and therefore,
\begin{equation*}
    u_{k}(x_{k}) \to 0 \quad \text{as} \quad k \to \infty,
\end{equation*}
since $\mathrm{C}_{k}$ approaches zero. 

Now, if $\lvert kx_{k}\rvert \to \infty$ as $k \to \infty$, then, by hypothesis \ref{hypothesis}, we have

\begin{equation*}
    u_{k}(x_{k}) \leq \lvert kx_{k}\rvert^{\frac{4+\alpha}{3-m}} k^{\frac{-(4+\alpha)}{3-m}} \to 0 \quad \text{as} \quad k \to \infty.
\end{equation*}

Now, if there exists $z \in \mathbb{R}^{n}$ such that $u(z) > 0$, we can choose $\tau \in \mathbb{N}$ sufficiently large so that $z \in B_{\tau r}$. Using equations \ref{first} and \ref{second}, we obtain:

\begin{eqnarray*}
\frac{u(z)}{\lvert z \rvert^{\frac{4+\alpha}{3-m}}} & \leq & \sup\limits_{B_{\tau r}} \frac{u(x)}{\lvert x \rvert^{\frac{4+\alpha}{3-m}}} \\
& = & \sup\limits_{B_{r}} \frac{u_{k}(x)}{\lvert x \rvert^{\frac{4+\alpha}{3-m}}} \\
& \leq & \frac{u(z)}{2 \lvert z \rvert^{\frac{4+\alpha}{3-m}}}
\end{eqnarray*}

This leads to a contradiction.

\end{proof}

\section{Radial analysis and proof of Theorem \ref{Liouville II}}

In this final section, we take a brief pause to analyze the radial boundary value problem
\begin{equation}\label{Radial_Analysis}
    \left\{
\begin{array}{rclcc}
\Delta_\infty u(x)  & = & (|x-x_0|-r)^\alpha u_+^m  & \text{in} & B_{\mathrm{R}}(x_0),\\
    u(x) & = & \mathrm{c} & \text{on} & \partial B_{\mathrm{R}}(x_0), 
\end{array}
    \right.
\end{equation}
where $\mathrm{c}>0$ is a constant, $0<r<\mathrm{R}$, and $x_0\in \mathbb{R}^n$. 

Next, we consider the following ODE related to \eqref{Radial_Analysis}:
\begin{equation}\label{radial-1D}
\omega''(s)(\omega'(s))^2=s^\alpha \omega(s)_+^m\quad\text{in}\quad (0,\mathrm{T}),
\end{equation}
subject to the initial conditions $\omega(0)=0$ and $\omega(\mathrm{T})=c$. Solving \eqref{radial-1D}, we obtain the solution
$$
\omega(s)= \tau(m,\alpha) s^{\frac{4+\alpha}{3-m}},
$$
where
$$
\tau(m,\alpha) = \left(\frac{(3-m)^4}{(4+\alpha)^3(1+\alpha+m)}\right)^{\frac{1}{3-m}}\quad \text{and}\quad \mathrm{T}=\left(\frac{\mathrm{c}}{\tau(m,\alpha)}\right)^{\frac{3-m}{4+\alpha}}.
$$
Fixing $x_0\in \mathbb{R}^n$ and $0 < r < \mathrm{R}$, we assume the dead-core compatibility condition
\begin{equation}\label{dead-core compatibility}
\mathrm{R}>\mathrm{T}.
\end{equation}
Define the following radially symmetric function $u:B_{\mathrm{R}}(x_0)\backslash B_r(x_0)\rightarrow \mathbb{R}$ given by
$$
u(x)\coloneqq \omega(|x-x_0|-r),
$$
where $r=\mathrm{R}-\mathrm{T}$. It can be easily verified that $u$ point-wise solves the equation
$$
\Delta_\infty u(x) = (|x-x_0|-r)^\alpha u_+^m(x)\quad\text{in}\quad B_{\mathrm{R}}(x_0)\backslash B_r(x_0).
$$
The boundary conditions $u\equiv 0$ on $\partial B_r(x_0)$ and $u=\mathrm{c}$ on $\partial B_{\mathrm{R}}(x_0)$ are also satisfied. Moreover, for each $y\in \partial B_r(x_0)$, we have
$$
\lim_{x\rightarrow y} D u(x) = \omega'(0^+)\frac{y}{|y|}=0.
$$
Thus, by extending $u\equiv 0$ in $B_r(x_0)$, we obtain a function in $B_{\mathrm{R}}(x_0)$ satisfying 
$$
\Delta_\infty u(x) = (|x-x_0|-r)^\alpha u_+^m(x)\quad\text{in}\quad B_{\mathrm{R}}(x_0).
$$
We conclude that the function
$$
u(x)=\tau(m,\alpha)\left(|x-x_0|-\mathrm{R}+\left(\frac{\mathrm{c}}{\tau(m,\alpha)}\right)^{\frac{3-m}{4+\alpha}}\right)^{\frac{4+\alpha}{3-m}}
$$
is the solution to \eqref{Radial_Analysis}. Its plateau is precisely $B_r(x_0)$, where
$$
0<r=\mathrm{R}- \left(\frac{\mathrm{c}}{\tau(m,\alpha)}\right)^{\frac{3-m}{4+\alpha}}.
$$

Finally, we are in a position to address the Theorem \ref{Liouville II}.

\begin{proof}[{\bf Proof of Theorem \ref{Liouville II}}]
For each $\varepsilon>0$, let \( u_\varepsilon \) be the solution of the penalized problem \eqref{Pe} with $|x|^{\alpha}$ replaced by $(|x-x_0|-r)_+^{\alpha}$, and $u_\varepsilon(x)  =  \displaystyle\sup_{\partial B_r(x_0)} u(x)$ on  $\partial B_{\mathrm{R}}(x_0)$.

Now, fixing \( \mathrm{R}>r >0 \), we consider \( v: B_{\mathrm{R}}(x_0) \to \mathbb{R} \), the radial solution to the boundary value problem
$$
\left\{
\begin{array}{rclcc}
\Delta_\infty v  & = & (|x-x_0|-r)_+^\alpha v_+^m  & \text{in} & B_{\mathrm{R}}(x_0),\\
    v(x) & = & \displaystyle\sup_{\partial B_r(x_0)} u(x) & \text{on} & \partial B_{\mathrm{R}}(x_0).
\end{array}
\right.
$$
First, by using the assumption $\mathfrak{h}_{r, x_0}(|x|) \lesssim (|x-x_0|-r)_+^{\alpha}$, we can apply the Comparison Principle (see, Lemma \ref{LemmaCP}) to ensure that
\begin{equation}\label{Liouville II1}
u \leq u_\varepsilon \quad \text{in } B_{\mathrm{R}}(x_0).
\end{equation}

Since \( u_{\varepsilon_j} \to u_0 \) uniformly in compact sets, we use the Stability and the Uniqueness of solutions to the Dirichlet problem ( Lemma \ref{stability} (resp. Remark \ref{Remark2.1}) and Corollary \ref{Uniqueness}) to conclude that \( v = u_0 \). Thus, from \eqref{Liouville II1}, we have
\begin{equation}\label{Liouville II2}
u \leq v \quad \text{in } B_{\mathrm{R}}(x_0).
\end{equation}
By hypothesis \eqref{Liouville II-H}, taking \( \mathrm{R} \gg 1 \), it follows that
\begin{equation}\label{Liouville II3}
    \sup_{\partial B_{\mathrm{R}}(x_0)}\frac{u(x)}{\mathrm{R}^{\frac{4+\alpha}{3-m}}} \leq \tau(m,\alpha) \theta,
\end{equation}
for some \( \theta \in (0, 1) \). For \( \mathrm{R} \gg 1 \), the solution \( v \) is given by
\begin{equation}\label{Liouville II4}
v(x) = \tau(m,\alpha)\left(|x-x_0|-\mathrm{R}+\left(\frac{\displaystyle\sup_{\partial B_{\mathrm{R}}(x_0)}u}{\tau(m,\alpha)}\right)^{\frac{3-m}{4+\alpha}}\right)_+^{\frac{4+\alpha}{3-m}}.
\end{equation}
Finally, combining \eqref{Liouville II3} and \eqref{Liouville II4}, we obtain
$$
u(x) \leq \tau(m,\alpha)\left(|x-x_0|-\mathrm{R}\left(1-\theta^{\frac{3-m}{4+\alpha}}\right)\right)_{+}^{\frac{4+\alpha}{3-m}}.
$$
Letting \( \mathrm{R} \to \infty \), we conclude the proof of the theorem.
\end{proof}

\subsection*{Final comments and generalizations}

A natural extension of our results arises when considering equations of the form 
\begin{equation}\label{extension}
    \Delta_{\infty} u = f(x, u, Du) \quad \text{in} \quad \Omega,
\end{equation}
where the right-hand side decays as $|f(x, t, \xi)| \le \mathrm{c}_n |x|^{\alpha} |t|^{m} |\xi|^\gamma$ for a dimensional constant $\mathrm{c}_n>0$. Allowing independent decay of \(f\) concerning \(x\), \(u\), and \(Du\) results in a gain of regularity in all possible scenarios, as will be discussed below.

A careful analysis in the proof of Theorem \ref{Hessian_continuity} yields that at the vanishing critical points, i.e., 
$$
   x_0 \in  \mathcal{Z}^{\prime}_0(u, \Omega) = \{ x \in \Omega : u(x) = | Du(x)| = 0\} 
$$
where 
\(
f(|x|, t, \xi) \simeq |x - x_0|^{\alpha} |t|^m |\xi|^{\kappa}
\), the solutions to \eqref{extension} belong to the class $C_{\text{loc}}^{\frac{4 + \alpha - \kappa}{3 - m - \kappa}}$ along  the set$\mathcal{Z}^{\prime}_0(u, \Omega)$. Moreover, such results will appear in the forthcoming work \cite{daSdosSS24}.

The relevance of studying equations like \eqref{extension} and their regularity along critical points lies in the fact that this model is closely related to a class of anisotropic elliptic operators of the form 
\begin{equation}
    \Delta_{F, \infty}^N u := \langle D^2 u DF(Du), DF(Du) \rangle = \sum\limits_{i,j=1}^n \partial_{i} F(Du) \partial_{ij} u \partial_j F(Du)
\end{equation}
 where $F(\xi) = |\xi|^{\kappa}$. Note that the choice \(\kappa = 1\) leads to the normalized $\infty$-Laplacian
$$
    \Delta_{\infty}^N u = |Du|^{-2} \langle D^2 u Du, Du \rangle,
$$
and the choice \(\kappa = 2 - \frac{\gamma}{2}\) leads to the singular/degenerate operator
\begin{equation}
    \Delta_{F, \infty}^N u  = |Du|^{-\gamma} \Delta_{\infty} u = \Delta_{\infty}^{\gamma} u, 
\end{equation}
where  $\Delta_{\infty}^{\gamma} u$ is the $(3-\gamma)$-homogeneous operator related to the $\infty$-Laplacian (see, \cite{BisVo23} for related topics).

Thus, the model equation \eqref{extension} can itself be interpreted as a model for singular/degenerate equations of the form (see, \cite{LY12})
\begin{equation}\label{Aniso-gamma}
    \Delta_{\infty}^{\gamma} u = f(|x|, u). 
\end{equation}
The singular nature of the equation requires a ``very weak'' sense of solutions, where test functions with zero gradients are disregarded. The results in this paper convey that the set of critical points carries a richer regularity theory. Indeed, when \(\alpha\) and \(m \) approach zero, we can show that, along the critical set \(\mathcal{Z}^{\prime}_0(u, \Omega) \), solutions are asymptotically of class $C^{1,\frac{1}{3-\gamma}}$. In particular, this implies that solutions to the normalized $\infty$-Laplacian, which corresponds to the choice $\gamma = 2$ in \eqref{Aniso-gamma}, with a bounded right-hand side, are asymptotically of class $\displaystyle C^{1,1}$ along $\mathcal{Z}^{\prime}_0(u, \Omega) $.

Finally, we may consider the model problem with gradient dependence and drift terms as follows (see, \cite{BisVo23})
\[
\Delta^{\gamma}_{\infty} u + q(|x|) \cdot D u |D u|^{\kappa} = f(|x|,u) \lesssim |x|^{\alpha}u_{+}^m  \quad \text{in} \quad  B_1 \subset \mathbb{R}^n.
\]
where $0 \leq \kappa <2-\gamma$ and $q(|x|) \lesssim |x|^{\sigma}$. In this setting, we may prove that viscosity solutions satisfy the estimate
$$
\displaystyle \sup_{B_r(x_0)} u(x) \leq \mathrm{C}(\textit{universal})r^{\min\left\{\frac{3-\kappa-\gamma-\sigma}{2-\gamma-\kappa}, \frac{4-\gamma+\alpha}{3-m-\gamma}\right\}}
$$
along free boundary points $x_0 \in \mathcal{Z}^{\prime}_0(u, B_1)$. Moreover, observe that
$$
\frac{3-\kappa-\gamma-\sigma}{2-\gamma-\kappa}= 1+\frac{1-\sigma}{2-\gamma-\kappa}>1 \quad \Leftrightarrow \quad  \kappa <2-\gamma \quad \text{and} \quad 0 \le \sigma<1,
$$
and
$$
 \frac{4-\gamma+\alpha}{3-m-\gamma}>1 \quad \Leftrightarrow \quad  1 + \alpha + m > 0 \quad \text{and} \quad 4+\alpha> \gamma,
$$
thereby providing compatibility conditions between the exponents.


\end{document}